\def\lf{\lfloor}
\def\rf{\rfloor}
\def\Z{{\mathbb Z}}
\def\La {{\Lambda}}
\def\si {{\sigma}}
\def\la {{\lambda}}
\def\ga{{ \gamma}}
\def\Ga{{ \Gamma}}
\def\eps{{ \epsilon}}
\newcommand{\nn}{\nonumber}
\newcommand{\dis}{\displaystyle}
\newtheorem{thm}{Theorem}
\newtheorem{prop}{\indent Proposition}
\newtheorem{lem}{\indent Lemma}
\newtheorem{defin}{\indent Definition}
\newtheorem{cor}{\indent Corollary}
\newcommand{\mmmintone}[1]{{\dis{\int\kern -.36cm-}}_{\kern-.21cm\substack{#1}}\;\;}
\newcommand{\mmmintwo}[2]{{\dis{\int\kern -.43cm-}}_{\kern-.21cm\substack{#1}}^{\substack{#2}}\;\;}
\newcommand{\submint}{{\scriptstyle{\int\kern -.66em -}}}
\newcommand{\submintone}[1]{{\scriptstyle{\int\kern -.66em-}}_{\scriptscriptstyle{\kern-.21em\substack{#1}}}}
\newcommand{\fracmint}{{\textstyle{\int\kern -.88em -}}}
\newcommand{\fracmintone}[1]{{\textstyle{\int\kern -.88em
-}}_{\scriptscriptstyle{\kern-.21em\substack{#1}}}\;}
\title{Layered systems at the mean field critical temperature}
\author{Luiz Renato Fontes\footnote{Instituto de Matem\'atica e
Estat\'\i stica. Universidade de S\~ao Paulo, SP, Brazil. E-mail:
lrfontes@usp.br},
Domingos H. U. Marchetti\footnote{Instituto de F\'\i sica. Universidade de
S\~ao Paulo, SP, Brazil. Email: marchett@if.usp.br},
Immacolata Merola\footnote{DISIM, Universit\`a di L'Aquila, L'Aquila, Italy.
Email: immacolata.merola@univaq.it},\\ Errico Presutti\footnote{GSSI,
L'Aquila, Italy. Email: errico.presutti@gmail.com}, and
Maria Eulalia Vares\footnote{Instituto de Matem\'atica. Universidade Federal
do Rio de Janeiro, RJ, Brazil. \!Email: eulalia@im.ufrj.br}}
\begin{document}

\maketitle

\begin{abstract}
We consider the Ising model on  $\mathbb Z\times \mathbb Z$  where
on each horizontal line $\{(x,i), x\in \mathbb Z\}$, the interaction
is given by  a ferromagnetic Kac potential with coupling strength $J_\ga(x,y)\sim\gamma J(\gamma (x-y))$
at the mean field critical temperature. We then add a nearest neighbor ferromagnetic
vertical interaction of strength $\epsilon$ and prove that for every $\epsilon >0$
the systems exhibits phase transition provided $\gamma>0$ is small enough.

\end{abstract}

{\it Key words}: Kac potentials, Lebowitz-Penrose free energy functional, phase transition.

{\it AMS Classification}: 60K35, 82B20

\section{Introduction}
\label{sec;1}
We consider an Ising model on the lattice $\mathbb Z\times \mathbb Z$,
whose points we denote by $(x,i)$. The spins $\sigma(x,i)$ take values in $\{-1,+1\}$ and
on each horizontal line, also called \emph{layer}, $\{(x,i), x\in \mathbb Z\}$,
the interaction
is given by a ferromagnetic Kac potential, that is,
 the  interaction between the spins at $(x,i)$ and $(y,i)$ is given by
  \begin{equation}
  \label{1.1}
- \frac {1}2 J_\ga(x,y) \si(x,i)\si(y,i),\quad \sum_{y\ne x} J_\ga(x,y)=1,
   \end{equation}
where $J_\ga(x,y) = c_\ga \ga J(\ga (x-y))$; $J(r)$, $r\in \mathbb R$,
is a symmetric probability density with continuous derivative and support in $[-1,1]$, $\gamma >0$ is a scale parameter,
$c_\ga$ is the normalization constant ($c_\ga$ tends to 1 as $\ga\to 0$).  We also suppose that $J(0)>0$.
$H_{\ga,0}$ denotes the Hamiltonian with only the interactions \eqref{1.1} on each layer, so that
different layers do not interact with each other, the system is essentially
one dimensional and does not have phase  transitions.

We fix the inverse temperature at the mean field critical value
$\beta=1$ so that also in  the Lebowitz-Penrose limit no
phase transition is present.
Purpose of this paper is to study what happens if we
put a \emph{small nearest neighbor vertical interaction}
  \begin{equation}
  \label{1.3}
- \eps \;\si(x,i)\si(x,i+1).
   \end{equation}

The main result in this paper is the following.

\medskip
\begin{thm}
\label{thm1.1}
Given any $\eps>0$, for any $\ga>0$ small enough $\mu_{\ga}^{+}\ne \mu_{\ga}^{-}$,
 $\mu_{\ga}^{\pm}$
the plus-minus DLR measures
defined as the thermodynamic limits of the Gibbs measures
with plus, respectively minus, boundary conditions.

\end{thm}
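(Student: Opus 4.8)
The plan is to reduce the assertion to a Peierls / Pirogov--Sinai estimate for an effective contour model, the reduction being an instance of the Lebowitz--Penrose coarse‑graining machinery for Kac systems, adapted to the present layered, semi‑discrete geometry.

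\emph{Step 1: the effective free‑energy functional.} Partition each layer into horizontal blocks of $\ell_\ga$ sites with $\ga^{-1}\ll\ell_\ga$ (a two‑scale construction, standard for Kac models, is convenient) and pass to the block magnetizations $m^{(i)}_X\in[-1,1]$, $X$ the block label along a layer and $i$ the layer index. Being weak and long range, $J_\ga$ contributes to the coarse free energy only through the block magnetizations, producing per layer the usual terms $\frac14\int J(r-r')(m_i(r)-m_i(r'))^2\,dr'-\tfrac12 m_i(r)^2$ in the rescaled horizontal coordinate $r=\ga x$. The vertical interaction \eqref{1.3} is \emph{not} rescaled: a horizontal block carries $\asymp\ga^{-1}$ vertical bonds and, conditionally on the block magnetizations, the columns of spins decouple in the limit $\ga\to0$ into independent one‑dimensional Ising chains with coupling $\eps$; hence the vertical bonds together with the spin entropy contribute $\ga^{-1}$ times the large‑deviation rate function of that chain, evaluated on the column profile. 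On profiles constant in $i$ and $r$ the resulting functional reduces to the \emph{column potential}
\[
g_\eps(m)\;=\;-\tfrac12\,m^2+\psi_\eps(m),
\]
$\psi_\eps$ the single‑site rate function of the $\eps$‑chain (so $\psi_0=I$ is the single‑spin rate function, $I''(0)=1$, and $g_0(m)=-\tfrac12 m^2+I(m)$ is the $\beta=1$ mean‑field free energy of a single layer, with unique minimizer $m=0$: no transition for decoupled layers). Now $\psi_\eps''(0)=1/\chi_\eps=e^{-2\eps}$ ($\chi_\eps=e^{2\eps}$ the one‑dimensional Ising susceptibility), so $g_\eps''(0)=e^{-2\eps}-1<0$ for \emph{every} $\eps>0$: $g_\eps$ is a symmetric double well with two nondegenerate minima $\pm m^*_\eps$ and a strictly positive barrier $b_\eps:=g_\eps(0)-g_\eps(m^*_\eps)$ (with $m^*_\eps\asymp\sqrt\eps$ and $b_\eps\asymp\eps^2$ as $\eps\downarrow 0$). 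Correspondingly the full functional has exactly the two minimizers $m_i(r)\equiv\pm m^*_\eps$, the two pure phases.

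\emph{Step 2: contours and the Peierls bound.} Attach to each cell $(X,i)$ of the coarse lattice $\Z\times\Z$ a label $+$, $-$ or $0$ according as the empirical block magnetization lies within a fixed tolerance of $+m^*_\eps$, of $-m^*_\eps$, or of neither, and call a contour $\Ga$ a maximal connected island of $0$‑cells together with the $\pm$‑cells it touches; in a finite box $\La$ with $+$ boundary condition, the event that $\si(0,0)$ lies in a minus‑cell forces a contour around the origin. The core estimate is $\mu^{+}_{\ga,\La}(\Ga)\le e^{-c(\eps)\,\ga^{-1}\,|\Ga|}$ with $c(\eps)>0$ independent of $\ga$ and $\La$ and $|\Ga|$ the number of coarse cells of $\Ga$: one peels $\Ga$ off, bounds the partition function restricted to its cells from above by $e^{-\ga^{-1}\Delta}$ times coarse‑graining errors and the unrestricted one from below by its all‑$+$ part, where $\Delta$ is the excess of the effective functional forced by the labels. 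This excess is bounded below cell by cell --- a $0$‑cell forces the column away from $\pm m^*_\eps$ and costs a definite fraction of $b_\eps$, a stretch across which the label switches from $+$ to $-$ costs at least the relevant surface tension: the Kac ``instanton'' tension $\tau^{\mathrm{hor}}_\eps\asymp\eps^{3/2}$ for a switch within a layer, or the one‑dimensional chain interface tension $\tau^{\mathrm{vert}}_\eps\asymp\eps^{2}$ for a switch across layers --- all strictly positive and $\ga$‑independent (here $J(0)>0$ enters, so that spins inside a block genuinely interact and the coarse description, with its coercive gradient term, is valid), whence $\Delta\ge c(\eps)\,|\Ga|$ with $c(\eps)\asymp\eps^2$.

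\emph{The main obstacle} is to justify this last estimate rigorously, simultaneously (i) for the non‑Kac vertical interaction --- one must show that the vertical bonds inside a block are captured by the one‑dimensional chain free energy up to an error $o(\ga^{-1})$ and that the columns genuinely decouple as $\ga\to0$ --- and (ii) at the mean‑field critical point, where the barrier and surface tensions, hence $c(\eps)$, are only of order $\eps^{2}$: the Lebowitz--Penrose error terms must then be controlled quantitatively and shown to be negligible against $c(\eps)\ga^{-1}$ once $\ga$ is small \emph{depending on} $\eps$. The criticality ($g_0''(0)=0$) is exactly what makes the effective double well shallow, so the whole coarse‑graining analysis must be sharper than in the usual non‑critical Kac setting. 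Granting it, the endgame is standard: the number of contours of size $|\Ga|=L$ encircling the origin in $\Z\times\Z$ is at most $L\,C^{L}$ for an absolute $C$, so once $\ga$ is small enough (given $\eps$) that $c(\eps)\ga^{-1}>2\log C$ one has $\sum_{\Ga\ni 0}\mu^{+}_{\ga,\La}(\Ga)<\tfrac12$ uniformly in $\La$; letting $\La\uparrow\Z\times\Z$ gives that under $\mu_\ga^{+}$ the spin $\si(0,0)$ lies in a plus‑cell with probability $\ge\tfrac12$, hence $\mu_\ga^{+}(\si(0,0))\ge\delta(\eps)>0$, and by spin‑flip symmetry $\mu_\ga^{-}(\si(0,0))\le-\delta(\eps)$, so $\mu_\ga^{+}\ne\mu_\ga^{-}$, which is the statement of Theorem~\ref{thm1.1}.
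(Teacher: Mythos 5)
Your overall architecture --- coarse--grain to an effective double--well free energy, define contours, prove a Peierls estimate --- is the right one, and your heuristics are sound (the order $\sqrt\eps$ of the spontaneous magnetization, the order $\eps^{2}$ of the barrier, the observation that a strictly positive $\eps$ shifts the second derivative of the $\beta=1$ free energy below zero so that a double well opens). But there is a genuine gap where you write ``Granting it, the endgame is standard'': the thing you are granting is precisely the part of the theorem for which a new idea is needed, and the paper in fact tells you so explicitly in the introduction (``a `Lebowitz--Penrose theorem' for our system is an interesting open problem that our analysis does not solve'').

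The obstruction is structural, not merely a matter of sharpening error terms. In the Lebowitz--Penrose reduction one replaces the partition function in a region $\La$ by a constrained sum over block magnetizations; this works because the Kac energy between $\La$ and $\La^c$ depends on the spins only through slowly varying block averages. The vertical coupling \eqref{1.3} is nearest--neighbour and of order one, so the vertical bonds \emph{across} $\partial\La$ are \emph{not} functions of any block magnetization, and your coarse--grained functional simply does not control the boundary term in the numerator/denominator of the contour weight. Likewise, within $\La$, the multi--canonical constraint (fixing $\sum_{x\in X}\si(x,i)$ for each block and layer) couples all columns jointly, so ``columns decouple into independent $1$--d chains'' is not a limit statement you can plug in; it is an equivalence--of--ensembles assertion for infinitely many coupled constraints, with error control at the critical point where the well is only $O(\eps^{2})$ deep, and it is exactly what the authors say they cannot prove.

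What the paper does instead, and what is missing from your proposal, is the chessboard trick: replace $H_{\ga,0}+$\eqref{1.3} by $H_{\ga,\eps}$ of \eqref{a2.3}, removing vertical bonds so that $(x,i)\sim(x,i+1)$ only when $\lf x/\ell_+\rf+i$ is even. Two things then happen simultaneously. First, each $Q$--rectangle contains only whole interacting pairs of layers, and a region $\La$ that is a union of $Q$--rectangles has \emph{no} vertical interaction with $\La^c$; this is what makes Theorem \ref{thm3s.3} (the Lebowitz--Penrose reduction) provable, with the two--layer free energy $\hat\phi_\eps$ of Proposition \ref{thm3s.2} (a product of independent pairs, hence elementary) playing the role of your $\psi_\eps$. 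Second, GKS monotonicity in the couplings shows the plus state of $H_{\ga,\eps}$ is \emph{less} magnetized than that of the full Hamiltonian, so a phase transition for the chessboard model implies one for the original. This comparison step is also absent from your argument and is essential, because the free energy you really analyse is that of the decoupled pairs (giving $m_\eps\approx\sqrt{3\eps}$), not of the full $1$--d chain (which would give $m^*\approx\sqrt{6\eps}$ via $\chi_\eps=e^{2\eps}$): the two models are different, and it is GKS that transports the conclusion from one to the other. Without the chessboard decoupling and the GKS comparison, your Step 2 cannot be carried out by the coarse--graining method you propose.
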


\medskip

It is worth mentioning that a version of Theorem~\ref{thm1.1} holds for $\beta>1$ with $\eps=\gamma^A$ for any
$A$. (See~\cite{FPPMV1} where indeed the above result has been conjectured.)

In many cases it has been proved that
if in the Lebowitz-Penrose limit
there is a phase transition then in dimension $d\ge 2$ there is also a phase transition
at small $\ga>0$ (i.e.\ without taking the limit $\ga\to 0$). We cannot follow this route here
because
we do not know the phase diagram for our model in the limit $\ga \to 0$: a
``Lebowitz-Penrose theorem" for our system is an interesting open problem that our analysis
does not solve. If the support of the Kac interaction would contain two dimensional balls
(i.e.\ layers at distance of order $\ga^{-1}$ interact with each other)
then the Lebowitz-Penrose analyis  \cite{LP} would apply and therefore
the free energy in the limit $\ga\to 0$ would be the convex envelope (i.e.\ the Legendre transform of the Legendre transform)
        \begin{equation}
        \label{1t.1}
\Big(f_\eps (m) - \frac{m^2}{2}\Big)^{**},
    \end{equation}
where $f_\eps (m)$ is the free energy of the one dimensional Ising model
with nearest neighbor interaction of strength $\eps$. \eqref{1t.1} yields a phase transition
if $\eps>0$.  Does  \eqref{1t.1} remain valid also when the Kac interaction is only horizontal?
We do not know the answer but our analysis shows that indeed our system has a phase transition
as indicated by  \eqref{1t.1}.

%


The proof of  Theorem \ref{thm1.1} requires a non trivial extension of previous
results on Kac potentials and it is given in complete details in this paper.
It is obtained by proving
Peierls bounds for suitably defined contours.
The bounds are established via a Lebowitz-Penrose
coarse graining procedure which however is not
straightforward for the reasons explained before (due to the local nature of the
vertical interaction  and the strictly horizontal structure
of the Kac interaction). The trick is to use ferromagnetic inequalities to compare
the magnetization under $\mu^+_\gamma$ with that under the corresponding
Gibbs measure for which the vertical interaction is removed in a chessboard fashion. To this new system (which is
more decoupled but not so much as to lose the phase transition) we can apply the  Lebowitz-Penrose
coarse graining strategy. In this way we
reduce the proof of the Peierls bound to the analysis
of  variational problems for a suitable free energy functional.

The model we are considering is related to a $d=1$
quantum spin model with transverse field, whose hamiltonian is:
\[
H(\si) = - \sum_{x \ne y} J_\ga(x,y) \hat \si^3(x) \hat \si^3(y)
- \alpha \sum_x  \hat \si^1(x)
\]
in its stochastic representation via  Feynman-Kac, \cite{AKN},
\cite{CCIL}  and \cite{IL}.  We are indebted to D. Ioffe for pointing out
the connection and for useful comments.

At this point we state two conjectures.

The first question is: can we choose $\eps= \eps(\ga)$
so that $\eps(\ga)\to 0$  as $\ga\to 0$ and
still have for all $\ga$ small enough a phase transition?  Is there a critical
choice for $\eps(\ga)$?  The conjecture is that
setting $\eps(\ga)= \kappa \ga^{2/3}$, $\kappa>0$,  we have a phase transition if $\kappa$
is large enough and no phase transition for $\kappa$ small.

This is related
to the next conjecture.  Consider the system where on each layer we have a   process $m(r,i)\in \mathbb R$,
$r\in \mathbb R$, $i\in \mathbb Z$.  The formal Gibbs measure that we want to study is:
           \begin{equation}
        \label{1t.4}
 e^{- \sum_i\{\int dr \kappa m(r,i)m(r,i+1)\}} \prod_i P(dm(r,i))
    \end{equation}
where $P$ is the Euclidean $\phi^4_1$ probability measure, namely the stationary solution of the real valued stochastic PDE
           \begin{equation}
        \label{1t.5}
du(r,t) = \Big (\frac 12 u''(r,t) - u^3(r,t)\Big)dt + dw,
    \end{equation}
$dw$ white noise in space-time.
The conjecture is that there is a  phase transition for $\kappa$ large and no phase transition for $\kappa$ small.

The measure in \eqref{1t.4} is the formal scaling limit of the Gibbs distribution
of the empirical magnetization when we scale space as $x \to r= \ga^{1+1/3}x$
and renormalize the averages by a factor $\ga^{-1/3}$ as proved
in \cite{BPRS} and \cite{FR}; see also \cite{D}, where \eqref{1t.5}
without the second derivative term  is derived
by studying the critical fluctuations in the mean field version of the model.

More precisely, in both papers the question is about the analysis of the long space-time
fluctuations of the d=1 Ising model with Glauber dynamics
and Kac potential at $\beta=1$ (like ours here).  Namely the analysis of the fluctuations field
\[
\ga\sum_x \phi(\ga^{1+1/3} x) \si(x, \ga^{-2/3}t),
\]
 with $\phi$ a  test function.  This
is the right normalization because
one can prove that at such long times
the typical values of the empirical magnetization in the limit $\ga\to 0$
have order $\ga^{1/3}$ and not the normal values $\ga^{1/2}$ of the finite time fluctuations.
It is then proved that the above fluctuations field converges to $\int \phi(r)u(r,t) dr$
where $u$ solves \eqref{1t.5}.

Outline of the paper: As already mentioned, our proof involves the study of the Gibbs measures for another
Hamiltonian, denoted by $H_{\gamma,\epsilon}$ and defined in \eqref {a2.3}, where the vertical
interactions are removed in a convenient chessboard fashion. This yields a
two dimensional system with pairs of long segments of parallel layers interacting vertically within the pair
(but not with the outside), plus the horizontal Kac interaction. For this system we can exploit the spontaneous
magnetization that emerges when two parallel one dimensional Kac models at mean field critical temperature interact
vertically as in our case, as studied in Section \ref{sec:4s}. This spontaneous magnetization plays a natural role
in the definition of contours (as in the analysis of the one dimensional Kac interactions below the mean field
critical temperature). The main point is that for this chessboard Hamiltonian, and after a proper coarse graining
procedure, we are able to implement the Lebowitz-Penrose procedure: the corresponding free energy functional is
defined in Section \ref{sec:2s} and the problem of getting the corresponding Peierls bounds for the weight of
contours is transformed in variational problems for the free energy functional. This is the content of Theorem
\ref{thm3s.3}, whose proof involves, as a preliminary step, the study of the free energy function of two
layers and its minimizers (determining the spontaneous magnetization). There are delicate choices of scales so as
to allow the implementation of this procedure, as explained in Section \ref{sec:2}. In Section \ref{sec:4t} we get
an upper bound for the restricted partition function that appears in the numerator of the weight of a contour.
Section \ref{sec:4s} is dedicated to the study of the minimizers of the free energy functional of
Section \ref{sec:2s}, which then will play a crucial role in the lower bound for denominator
in the weight of a contour, completed in Section \ref{sec:6t}. In Section \ref{sec:7t} we combine the two
estimates to conclude the proof of the main theorem. The analysis of the mean field free energy function for two
layers and the crucial estimates used in Section \ref{sec:4s} are carried out in the Appendices.

\vskip1cm

\setcounter{equation}{0}

\section{Contours}
\label{sec:2}

For $i\in \mathbb Z$, we call $i$-th layer the set
$\mathbb Z \times \{i\}$. As mentioned in the introduction we shall extensively use
coarse graining, for which we start by partitioning  each layer
into intervals of length  $\ell \in \{ 2^n,\;n\in \mathbb Z\}$.
Let $\mathcal D^{\ell,i}=\{C^{\ell,i}_{k \ell}, k\in \mathbb Z\}$
denote the partition of the $i$-th layer:
  \begin{equation}
        \label{2.1}
C^{\ell,i}_{x}=C^{\ell}_{x} \times \{i\}:= ([k\ell, (k+1)\ell) \cap \mathbb Z)\times \{i\}, \; \text{where}\; k=\lfloor x/\ell\rfloor
\end{equation}
and, as usual, $\lfloor s\rfloor=\max\{x \in \mathbb Z\colon x \le s\}$.
To simplify notation we  restrict
$\ga$ to belong to $\{2^{-n}, n\in \mathbb N\}$.

For the coarse grained description we shall use  three length scales and an accuracy parameter
$\zeta>0$ which all depend on $\ga$:
    \begin{equation}
    \label{2.2}
\ga^{-1/2},\;\; \ell_{\pm}= \ga^{-(1 \pm \alpha)},\quad \zeta=
\ga^a,\qquad 1\gg \alpha\gg a>0.
     \end{equation}
The smallest scale, $\ga^{-1/2}$, will be used to implement the Lebowitz-Penrose
procedure to define free energy functionals.  Together with $\zeta$, the scales $\ell_{-}$ and $\ell_+$ will be used to define,
at the spin level, the \emph{plus} and \emph{minus} regions and then the contours.

For notational simplicity we suppose that also $\ga^{-\alpha}$ 
and all the above lengths belong to
$\{2^{n}, n\in \mathbb N_+\}$: this is a restriction on
$\alpha$ that could be removed by changing ``slightly'' $\alpha$ with $\ga$.

We shall prove Theorem \ref{thm1.1} for the ``chessboard" Hamiltonian:
  \begin{equation}
  \label{a2.3}
H_{\ga,\eps} = -\frac {1}2 \sum_{x\ne y,i} J_\ga(x,y) \si(x,i)\si(y,i) - \eps\sum_{x,i} \chi_{i,x}
\si(x,i)\si(x,i+1),
   \end{equation}
where
  \begin{equation}
  \label{a2.4}
 \chi_{x,i} = \begin{cases} 1 & \text{if $\lfloor x/{\ell_+}\rfloor+i$ is even}, \\
 0 & \text{otherwise}.
 \end{cases}
   \end{equation}

\begin{defin}
\label{a2.4.1}
 When $\chi_{x,i}=1$, according to \eqref{a2.4}, we say that $(x,i)$ and $(x,i+1)$ interact vertically and
denote by $v_{x,i}$ the site $(x,j)$ which
 interacts vertically with $(x,i)$.
 \end{defin}

 By the GKS correlation inequalities (see e.g.~Theorem 1.21 in Chapter IV of~\cite{L}),
the plus state for $H_{\ga,\eps}$ is less magnetized
than the one for the full Hamiltonian (with $\chi_{i,x}$  replaced by 1 everywhere).
Hence Theorem \ref{thm1.1} will follow once we prove that the
magnetization in the  plus state
of the  Hamiltonian  given by \eqref{a2.3} is strictly positive.

For the chessboard Hamiltonian, we shall see via a Lebowitz-Penrose analysis
that in the limit as $\ga\to 0$ there is a spontaneous magnetization equal to
some  $m_\eps>0$ in the plus state and $-m_\eps$ in the minus state. This will follow from the
analysis in sections \ref{sec:2s}--\ref{sec:4s}. This value $m_\eps$ is used to define
contours, as we now explain (taking $m_\eps>0$ for granted).

Define first the empirical magnetization on a scale $\ell \in \{2^n, n\in \mathbb N\}$ in the layer $i$ as
    \begin{equation}
    \label{2.3}
\si^{(\ell)}(x,i)
:= \frac{1}{\ell} \sum_{y: (y,i)\in C^{\ell,i}_{x}} \si(y,i).
     \end{equation}
We also consider the partition of $\mathbb Z^2$ into rectangles $\{Q_\gamma (k,j)\colon k, j \in \mathbb Z\}$, where
$$
Q_\gamma (k,j)=\left([k\ell_+, (k+1)\ell_+) \times [j\ga^{-\alpha}, (j+1)\ga^{-\alpha})\right) \cap \mathbb Z^2 \text{  if $k$ is even}
$$
and
$$
Q_\gamma (k,j)=\left([k\ell_+, (k+1)\ell_+) \times (j\ga^{-\alpha}, (j+1)\ga^{-\alpha}]\right) \cap \mathbb Z^2\text{  if $k$ is odd}.
$$
For convenience we sometimes write $Q_{x,i}=Q_\gamma (k,j)$ if $(x,i) \in Q_\gamma (k,j)$.
The important feature of this definition (frequently exploited in the sequel) is that the spins in each rectangle
$ Q_{x,i}$ do not interact vertically with the spins of the complement, namely
recalling the
definition of $v_{x,i}$ and that $\ga^{-\alpha}$ is even, we see that
 $v_{x,i} \in Q_{x,i}$ for all $(x,i)$.  Notice also that the rectangles
$Q_\gamma (k,j)$ become squares if lengths are measured in interaction length units: in fact
in such units the horizontal side of a rectangle has length $\ell_+ / \ga^{-1}= \ga^{-\alpha}$
and  the vertical side has also length $\ga^{-\alpha}$ as the vertical
interaction length is equal to 1. The other important feature
behind the definition of rectangles is that their size in interaction length units diverges
as $\ga\to 0$: this will be exploited to prove decay of correlations from the boundaries.

 \medskip

The random variables $\eta(x,i)$, $\theta(x,i)$ and $\Theta(x,i)$ are then defined as follows:

\begin{itemize}

\item\;   $\eta(x,i)=\pm 1$ if  $\dis{ \big|  \si^{(\ell_-)}(x,i) \mp m_\eps
 \big|\le \zeta}$; $\eta(x,i)=0$ otherwise.

\item\;   $\theta(x,i)= 1$, [$=-1$], if
$\eta(y,j)= 1$, [$=-1]$, for all $(y,j)\in
Q_{x,i}$;  $\theta(x,i)=0$ otherwise.

\item\;   $\Theta(x,i)= 1$, [$=-1$], if
$\eta(y,j)= 1$, [$=-1$], for all $(y,j) \in
\cup_{u,v\in \{-1,0,1\}} Q_\gamma(k+u,j+v)$,
with $(k,j)$ determined by $Q_{x,i} =Q_\gamma(k,j)$.

\end{itemize}
Namely, for the $\Theta$ variables we consider a ``block"  $3\times 3$ of $Q$-rectangles.

\medskip

\noindent
The plus phase is the union of all
the rectangles
$Q_{x,i}$ such that  $\Theta(x,i)=1$,
the  minus phase is where   $\Theta(x,i)=-1$, in the complement the phase
is \emph{undetermined}.

Two rectangles $Q_\gamma (k,j)$ and $Q_\gamma (k',j')$ are said to
be connected if $(k,j)$ and $(k',j')$ are $*$--connected, i.e. $|k-k'|\vee |j-j'|\le 1$.
By choosing suitable boundary conditions, we shall restrict in the sequel
to spin configurations
such that $\Theta=1$ outside of a compact
(the case
when $\Theta=-1$ 
can be recovered
via spin flip). Given such a  $\si$,
%
we call \emph{contours} the  pairs
$\Ga=({\rm sp}(\Ga), \eta_\Ga)$, where ${\rm sp}(\Ga)$ is a maximal connected component
of the undetermined region, called \emph{the spatial support of $\Ga$},
and $\eta_\Ga$ is the restriction of $\eta$ to ${\rm sp}(\Ga)$,  called \emph{the specification of $\Ga$}.

\vskip.5cm
{\bf Geometry of contours.}
Denote by ${\rm ext} (\Ga)$ the maximal unbounded
connected component of the complement of ${\rm sp}(\Ga)$
and $\partial_{\rm out}(\Ga)$ the union
of the rectangles in ${\rm ext} (\Ga)$ which are
connected to ${\rm sp}(\Ga)$. $\partial_{\rm in}(\Ga)$
is instead the union of the rectangles in
${\rm sp} (\Ga)$ which are connected to ${\rm ext} (\Ga)$.
$\Theta$ is constant and different from 0 on $\partial_{\rm out}(\Ga)$
and we call \emph{plus} a contour $\Ga$
when $\Theta=1$   on $\partial_{\rm out}(\Ga)$ and \emph{minus} otherwise.
Observe that in a plus contour $\eta=1$ on $\partial_{\rm in}(\Ga)$.

Analogously we call ${\rm int}_k(\Ga), k=1,\dots,k_\Ga$ the bounded
maximal connected components (if any)
of the complement of ${\rm sp}(\Ga)$,
$\partial_{{\rm in},k}(\Ga)$ the union of all rectangles in
${\rm sp}(\Ga)$ which are connected
to ${\rm int}_k(\Ga)$. $\partial_{{\rm out},k}(\Ga)$ is
the union of all  the rectangles in
${\rm int}_k(\Ga)$ which are connected
to ${\rm sp}(\Ga)$. Then
$\Theta$ is constant and different from 0
on each $\partial_{{\rm out},k}(\Ga)$ and
we write $\partial^{\pm}_{{\rm out},k}(\Ga)$, ${\rm int}^\pm_k(\Ga)$  and $\partial^{\pm}_{{\rm in},k}(\Ga)$
if $\Theta=\pm 1$ on the former, observing that $\eta = \pm 1$ on
$\partial^{\pm}_{{\rm in},k}(\Ga)$, respectively.  We also call
    \begin{equation}
    \label{2.4}
c(\Ga) = {\rm sp}(\Ga) \cup \bigcup_{k} {\rm int}_k(\Ga).
     \end{equation}

\vskip.5cm
{\bf Diluted partition functions.}
Let  $\La$ be a bounded region which is an union of $Q$-rectangles. 
The plus diluted partition function in $\La$ with boundary conditions  $\bar\si$ is
    \begin{equation}
    \label{2.4.2}
Z^+_{\La,\bar\si} = \sum_{\si_{\La}} \mathbf 1_{\{\Theta =1\, \text{on $\partial_{\rm in} (\La)$}\}}
e^{- H_{\ga,\eps}(\si_{\La}|\bar\si)} =: Z_{\La,\bar\si}(\Theta =1\, \text{on $\partial_{\rm in} (\La)$}),
     \end{equation}
where $\bar\si$ is a configuration on the complement of $\La$;
$\Theta$ is computed on the configuration $(\si_{\La},\bar\si)$ and
$\partial_{\rm in} (\La)$ is the union of all $Q$-rectangles in $\La$ connected to
$\La^c$.  Minus diluted partition functions are defined analogously. As a rule we denote by
$Z_{\La,\bar\si}(\mathcal A)$ the partition function with the constraint
$\mathcal A$, $\mathcal A$ a set of configurations. Notice that there is no  vertical
interaction between the spins in $\La$ and those in its complement because $\La$ is union of rectangles.

 The plus diluted Gibbs measure (with boundary conditions $\bar\si$) is defined in the
usual way, namely, given a configuration of spins $\si_\La$ on $\La$, the weight assigned
to $\si_\La$ by the plus Gibbs measure is given by
\begin{equation}
    \label{eq:pmgm}
\mu^+_{\La,\bar\si}(\si_\La) = \frac{e^{- H_{\ga,\eps}(\si_{\La}|\bar\si)}}{Z^+_{\La,\bar\si}}
\mathbf 1_{\{\Theta =1\, \text{on $\partial_{\rm in} (\La)$}\}}.
     \end{equation}
The minus diluted Gibbs measure is defined analogously.

 We shall prove the
Peierls estimates for the plus and minus diluted Gibbs measures, which, as a consequence,
 have distinct thermodynamic limits;
Theorem \ref{thm1.1} will then follow.

\vskip.5cm
{\bf Weight of a contour.}
We are now ready to define the fundamental notion of \emph{weight of a contour}.  Let
$\Ga$ be a plus contour (the definition for minus contours is obtained by spin flip)
and $\bar \si$ a configuration on the complement of $c(\Ga)$ such that $\eta=1$ on
$\partial_{\rm out}(\Ga)$ (in agreement with the definition of a plus contour).
Then the weight of $\Ga$ with boundary conditions
$\bar \si$ is
    \begin{equation}
    \label{2.6}
W_\Ga(\bar\si):= \frac
{Z_{c(\Ga);\bar\si}(\eta  = \eta_\Ga \;{\rm on}\;
 \, {\rm sp}(\Ga); \Theta = \pm 1 \;{\rm on \;each} \,\; \partial_{{\rm out},k}^{\pm}(\Ga))}
{Z_{c(\Ga);\bar\si}(\Theta = 1\; \rm{ on}\;
{\rm sp}(\Ga)\; {and \; on\; each}\, \partial_{{\rm out},k}^{\pm}(\Ga)\})}
     \end{equation}
where $Z_{\La,\bar \si}(\mathcal A)$ is the partition function
in $\La$ with Hamiltonian $H_{\ga,\eps}$,
with boundary conditions $\bar\si$ and constraint $\mathcal A$.  In the next sections we shall prove the following theorem

\begin{thm} [The Peierls bounds]
\label{peierls}

There are $c>0$, $\eps_0>0$ and $\ga_\cdot:(0,\infty)\to(0,\infty)$ so that for any $0<\eps\le \eps_0$,
$0<\ga\le \ga_\eps$ and any contour $\Ga$ with boundary spins $\bar\si$
  \begin{equation}
    \label{2.66}
    W_\Ga(\bar\si) \le e^{-c |{\rm sp}(\Ga)| \ga^{2a+4\alpha}}.
    \end{equation}

\end{thm}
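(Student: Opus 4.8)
The plan is to prove the Peierls bound \eqref{2.66} by estimating separately the numerator and the denominator of the weight \eqref{2.6}, reducing both to variational problems for the Lebowitz--Penrose free energy functional introduced in Section~\ref{sec:2s}. I would proceed roughly as follows.

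\textbf{Step 1: Coarse graining and the free energy functional.} First I would apply the Lebowitz--Penrose coarse graining on the smallest scale $\ga^{-1/2}$ to the chessboard Hamiltonian $H_{\ga,\eps}$. Since $\La=c(\Ga)$ is a union of $Q$-rectangles, there is no vertical interaction across $\partial c(\Ga)$, so the partition functions factor nicely over connected components and the boundary conditions $\bar\si$ enter only through the horizontal Kac interaction with a layer of width $\ga^{-1}$ inside $c(\Ga)$. Block-spin summation on scale $\ga^{-1/2}$ converts $-\log Z$, up to errors that are $o(1)$ per block in interaction-length units (these are the standard LP estimates, controlled because the block size $\ga^{-1/2}$ in interaction units is $\ga^{1/2}\to\infty$... more precisely because the relevant ratio of scales diverges), into the free energy functional $\mathcal F_{\ga,\eps}$ of Section~\ref{sec:2s} evaluated on the empirical magnetization profile. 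The net effect is
\begin{equation}
\label{pf.coarse}
-\log Z_{c(\Ga);\bar\si}(\mathcal A) = \min_{m\in\mathcal A} \mathcal F_{\ga,\eps}(m) + \text{(error)},
\end{equation}
with the error term sublinear in $|c(\Ga)|\,\ga$ and hence negligible against the target $\ga^{2a+4\alpha}|{\rm sp}(\Ga)|$ once the scales in \eqref{2.2} are chosen with $a,\alpha$ small enough.

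\textbf{Step 2: Lower bound on the denominator.} On the constrained set in the denominator one has $\Theta=1$ on ${\rm sp}(\Ga)$ and $\Theta=\pm1$ matching $\partial^\pm_{{\rm out},k}(\Ga)$, so a natural trial profile is the (coarse-grained) spontaneous magnetization $m_\eps$ (respectively $-m_\eps$) adjusted near the internal boundaries by the minimizer/instanton profiles studied in Section~\ref{sec:4s}, using that $m_\eps$ is precisely the minimizer of the two-layer free energy. Since these profiles realize the unconstrained minimum of $\mathcal F_{\ga,\eps}$ on each component up to an interface cost localized near the $\partial^\pm_{{\rm out},k}$, the denominator is bounded below by $e^{-\min\mathcal F_{\ga,\eps}-C(\text{interface})}$; crucially the interface contributions are attached to $\partial_{\rm out}$ and the $\partial_{{\rm out},k}$, whose total size is controlled by $|{\rm sp}(\Ga)|$.

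\textbf{Step 3: Upper bound on the numerator and the free-energy excess.} This is the heart of the argument and the main obstacle. On the numerator's constrained set, $\eta=\eta_\Ga$ on all of ${\rm sp}(\Ga)$, which forces the empirical magnetization on scale $\ell_-$ to deviate from $\pm m_\eps$ by at least $\zeta=\ga^a$ (or to be ``not close to either well'') throughout ${\rm sp}(\Ga)$ --- this is exactly the defining property of the undetermined region. I would show that any profile $m$ compatible with this constraint pays an excess free energy
\begin{equation}
\label{pf.excess}
\mathcal F_{\ga,\eps}(m) - \min \mathcal F_{\ga,\eps} \ge c\,\ga^{2a+4\alpha}\,|{\rm sp}(\Ga)|,
\end{equation}
by a coercivity/lower-bound estimate on the functional: locally around each well $\pm m_\eps$ the free energy density grows quadratically in the deviation, giving $\gtrsim \zeta^2=\ga^{2a}$ per unit volume in interaction units, and the volume of ${\rm sp}(\Ga)$ in those units is $|{\rm sp}(\Ga)|\,\ga^{2}$ divided by the rectangle count, producing the extra $\ga^{4\alpha}$ from the $\ell_+\times\ga^{-\alpha}$ rectangle geometry and the $3\times3$ block structure in the definition of $\Theta$. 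One must be careful that the constraint is only on $\eta$ (hence on the $\ell_-$-averages), and that along thin ``bridges'' of undetermined rectangles the profile cannot avoid the cost; here one uses that a maximal undetermined component, by the definition via $\Theta$ and $*$-connectedness, contains at least of order $|{\rm sp}(\Ga)|/(\ell_+\ga^{-\alpha})$ rectangles each contributing a fixed free-energy quantum. Combining \eqref{pf.coarse}, Steps 2 and 3 gives $W_\Ga(\bar\si)\le e^{-c\ga^{2a+4\alpha}|{\rm sp}(\Ga)|}$, which is \eqref{2.66}.

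\textbf{Main obstacle.} The delicate point is quantifying the free-energy excess \eqref{pf.excess} uniformly: one needs the two-layer free energy function (analyzed in the Appendices) to have non-degenerate wells at $\pm m_\eps$ with a quantitative spectral gap, and one needs the coarse-graining errors in \eqref{pf.coarse} to be genuinely smaller than $\ga^{2a+4\alpha}$ per rectangle --- this is what pins down the hierarchy $1\gg\alpha\gg a>0$ in \eqref{2.2} and forces the somewhat awkward exponent $2a+4\alpha$. Matching the numerator constraint (expressed through $\eta$ on scale $\ell_-$) with the functional (defined on scale $\ga^{-1/2}$) without losing the excess along thin channels of the undetermined region is the technically hardest piece, and is presumably where Theorem~\ref{thm3s.3} and the estimates of Sections~\ref{sec:4t}, \ref{sec:4s} and \ref{sec:6t} do the real work.
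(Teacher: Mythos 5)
Your overall plan matches the paper's: coarse-grain on scale $\ga^{-1/2}$ to reduce both numerator and denominator of~\eqref{2.6} to variational problems for the Lebowitz--Penrose functional (Theorem~\ref{thm3s.3}), prove a free-energy excess on ${\rm sp}(\Ga)$ for the numerator, and plug a good trial function into the denominator. However there are two places where what you write would not actually work.

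\emph{First, your mechanism for the excess in Step 3 is wrong.} You assert that the constraint $\eta=\eta_\Ga$ ``forces the empirical magnetization on scale $\ell_-$ to deviate from $\pm m_\eps$ by at least $\zeta$ throughout ${\rm sp}(\Ga)$ --- this is exactly the defining property of the undetermined region.'' It is not. The undetermined region is defined by $\Theta=0$, i.e.\ somewhere in a $3\times 3$ block of $Q$-rectangles the $\eta$'s are not uniformly $+1$ or uniformly $-1$; most points in ${\rm sp}(\Ga)$ can, and typically do, have $\eta=\pm1$ and $\ell_-$-averaged magnetization within $\zeta$ of a well, paying no cost at all. Were your reading correct, the excess would be of order $\zeta^2$ per site, far larger than what can be true. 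The paper's Theorem~\ref{thm4s.2} instead shows that for \emph{each $Q$-rectangle} in $\Delta_0$ there is a neighbouring (or identical) rectangle in which either (a) an $\ell_-$-interval has $\eta=0$, contributing $\ga^{-(1-\alpha)}\zeta^2$, or (b) two consecutive $\ell_-$-intervals carry opposite signs of $\eta$, contributing $\ga^{-(1-\alpha)}\ga^\alpha$ via the gradient term; the per-rectangle quantum is therefore $\ga^{-(1-\alpha)}\min\{\ga^\alpha,\ga^{2a}\}$, and dividing $|\Delta_0|$ by the rectangle size $\ga^{-(1+\alpha)}\ga^{-\alpha}$ yields the $\ga^{2a+4\alpha}$ in~\eqref{2.66}. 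Your heuristic ``$\gtrsim\zeta^2$ per unit volume in interaction units, rectangle geometry gives the $\ga^{4\alpha}$'' produces the right exponent only by coincidence; it neither accounts for the interfacial alternative (b) nor for the fact that the cost is concentrated on a sparse set of bad rectangles, one per $Q$-block.

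\emph{Second, Step 2 misdescribes how the boundary and interior contributions disappear from the ratio.} Bounding the denominator below by $e^{-\min\mathcal F - C(\text{interface})}$ with the interface cost ``controlled by $|{\rm sp}(\Ga)|$'' is not enough: such a term of order $|{\rm sp}(\Ga)|$ would swamp the excess $\ga^{2a+4\alpha}|{\rm sp}(\Ga)|$ unless it cancels \emph{exactly} against a matching term in the numerator. The paper achieves this by splitting $F_{{\rm sp}(\Ga),\ga}$ into $\Phi_{\Delta_0}+\Phi_{\Delta_{\rm in}}+\sum_k\Phi^\pm_{\Delta^\pm_k}$ (Corollary~\ref{cor4t.2}), invoking the spin-flip identity $\Phi^-_{\Delta^-_k}(\bar m_{\si_{I^-_k}})=\Phi^+_{\Delta^-_k}(\bar m_{-\si_{I^-_k}})$ so the minus pieces and the sums over $\si_{I^-_k}$ in the numerator exactly equal the corresponding plus pieces in the denominator, and then building the trial profile from the \emph{minimizers} of those very $\Phi$'s on $\Delta_{\rm in},\Delta^\pm_k$ and from $m_\eps$ on $\Delta_0$. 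This requires two ingredients you have not mentioned: Proposition~\ref{thm3s.4} (restriction to smooth profiles, which makes the decomposition~\eqref{4s.10} work in both directions), and Theorem~\ref{thm4s.3}'s decay bound~\eqref{4s.5.25.3.1}, which shows the minimizers on $\Delta_{\rm in}$ and $\Delta^\pm_k$ are exponentially close to $m_\eps$ at the inner edge, so that the cross term in~\eqref{4s.10} between $\Delta_0$ and the rest --- non-negative and hence droppable in the upper bound but a liability in the lower bound --- is controlled by $e^{-c\ga^{-\alpha}}$ and thus negligible. Without these cancellation and decay mechanisms, the boundary terms would spoil the Peierls bound.
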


In Section \ref{sec:7t} we shall see how to prove Theorem \ref{thm1.1} using the Peierls bounds
\eqref{2.66}.

\vskip1cm

\setcounter{equation}{0}

\section{Reduction to a variational problem} 
\label{sec:2s}

The goal of this section is to introduce the Lebowitz-Penrose free energy functional
and to set the variational problem that emerges in the estimates
of the partition functions in \eqref{2.6}.
We start by the next proposition which deals with the very simple situation of two layers
of $\pm 1$ spins whose unique interaction is the nearest neighbor vertical one. It is
just a chain of independent pairs of spins. Therefore the multi-canonical partition
function, where we fix the magnetization on each layer, is studied by very simple tools.
This first  result, proved in Appendix \ref{sec:5s}
for sake of completeness,
describes its convergence (in the thermodynamic limit) to  the infinite volume free energy
$\hat\phi_\eps(m_1,m_2)$ and finite volume corrections. We then state and prove a Lebowitz-Penrose
theorem for the spin model associated to the chessboard Hamiltonian $H_{\gamma,\eps}$.

\medskip

\begin{prop}
\label{thm3s.2}

Let $n$ be a positive integer,  $X_n=\{-1,1\}^n$.  For $i=1,2$, let \\
$m_i \in \{-1 + \frac{2j}{n} \colon j=1,\dots,n-1\}$  and set
    \begin{equation}
    \label{2ss.8}
Z_{\eps,n}(m_1,m_2) = \sum_{(\si_1,\si_2)\in X_n \times X_n} \mathbf 1_{\{ \sum_{x=1}^n \si_i(x) =n m_i\; i=1,2\}}
e^{ \eps \sum_{x=1}^n\si_1(x)\si_2(x)}.
    \end{equation}
    There is a continuous and convex function $\hat \phi_\eps$ defined on $[-1,1]\times [-1,1]$,  with
    bounded derivatives on each $[-r,r]\times [-r,r]$ for $|r| <1$, and a constant $c>0$ so that
    \begin{equation}
    \label{2ss.9}
- \hat \phi_\eps (m_1,m_2) - c\frac{\log n}{n} \le     \frac 1n \log Z_{\eps,n}(m_1,m_2) \le -
\hat \phi_\eps (m_1,m_2). 
    \end{equation}

\end{prop}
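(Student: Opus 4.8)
The plan is to compute $Z_{\eps,n}(m_1,m_2)$ essentially exactly by first summing over the vertical interaction conditionally on the "overlap pattern", then applying standard entropy asymptotics for binomial coefficients. First I would observe that, since the Hamiltonian $\eps\sum_x \si_1(x)\si_2(x)$ only depends on the pair configuration through the number $k$ of sites where $\si_1(x)=\si_2(x)$, one can stratify the sum in \eqref{2ss.8} according to the "agreement set" $A=\{x\colon \si_1(x)=\si_2(x)\}$, $|A|=k$. Writing $n_i^+ = n(1+m_i)/2$ for the number of $+1$ spins on layer $i$, a short combinatorial count gives the number of pairs $(\si_1,\si_2)$ with prescribed magnetizations and $|A|=k$ as a product of binomial coefficients (choose the split of $A$ into sites where both spins are $+$ versus both $-$, and similarly on $A^c$), and on each such stratum the weight is $e^{\eps(k-(n-k))}=e^{\eps(2k-n)}$. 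Thus
\begin{equation}
\label{pp1}
Z_{\eps,n}(m_1,m_2)=\sum_{k} e^{\eps(2k-n)}\, N_n(k;m_1,m_2),
\end{equation}
where $N_n$ is the explicit product of (at most four) binomial coefficients, and $k$ ranges over an interval determined by $m_1,m_2$ (nonempty precisely because $m_i\in(-1,1)$ are of the prescribed lattice form).

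Next I would pass to the exponential scale. By Stirling's formula, each binomial coefficient $\binom{n}{\lfloor n t\rfloor}$ equals $\exp\{n\, I(t) + O(\log n)\}$ uniformly for $t$ in compact subsets of $(0,1)$, with $I(t)=-t\log t-(1-t)\log(1-t)$ the entropy. Hence $\frac1n\log N_n(k;m_1,m_2)$ converges, uniformly on the relevant range, to a concave function of the scaled overlap $u=k/n$ (a sum of entropy terms in the proportions determined by $m_1,m_2,u$), with an $O(\log n / n)$ error. Substituting into \eqref{pp1} and using the fact that a sum of at most $n+1$ positive terms is between the maximal term and $n+1$ times it, I get
\begin{equation}
\label{pp2}
\frac1n\log Z_{\eps,n}(m_1,m_2)= \max_{u}\Big\{\eps(2u-1)+ S(u;m_1,m_2)\Big\} + O\!\Big(\frac{\log n}{n}\Big),
\end{equation}
and I would define $-\hat\phi_\eps(m_1,m_2)$ to be exactly this maximum (the variational formula), so that the upper bound in \eqref{2ss.9} holds with no error term and the lower bound holds with the stated $c\log n/n$ correction, after checking the $O$-constants are uniform. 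One must be slightly careful at the endpoints of the $k$-range (the extreme strata may have a zero binomial factor), but the maximizing $u$ is interior when $m_i\in(-1,1)$, so these boundary terms are negligible.

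Finally I would record the regularity of $\hat\phi_\eps$. Convexity: $-\hat\phi_\eps$ is a pointwise maximum over $u$ of functions that are jointly concave in $(m_1,m_2,u)$ (each entropy term $I(\cdot)$ is concave and is composed with an affine map of $(m_1,m_2,u)$), and partial maximization of a jointly concave function is concave; hence $\hat\phi_\eps$ is convex. Continuity on $[-1,1]^2$ follows because the variational expression extends continuously to the closed square (the entropy $I$ extends continuously to $[0,1]$ with value $0$ at the endpoints), and bounded derivatives on $[-r,r]^2$ follow from the envelope theorem together with the fact that the optimal $u=u^*(m_1,m_2)$ stays bounded away from $0$ and $1$ on that compact set, so the relevant logarithms and hence $\partial_{m_i}$ of the objective are bounded. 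I expect the main obstacle to be purely bookkeeping: setting up the four-binomial count $N_n(k;m_1,m_2)$ correctly, tracking that the Stirling error is $O(\log n)$ uniformly (which forces working on $[-r,r]^2$ or handling the approach to the boundary separately), and identifying the discrete maximum in \eqref{pp1} with the continuum maximum in \eqref{pp2} up to the claimed $\log n/n$; none of this is conceptually deep but it is where all the care goes. (Since the statement is only asserted for $m_i$ of the special lattice form and the conclusion is a two-sided bound, no large-deviation lower bound beyond "largest term" is needed.)
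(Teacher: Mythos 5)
Your approach is genuinely different from the paper's. The paper works via equivalence of ensembles: it writes down the two-site grand-canonical pressure $\hat\pi_\eps(h_1,h_2)$ explicitly, proves strict convexity, defines $\hat\phi_\eps$ as its Legendre transform, and then uses the tilting identity $Z_{\eps,n}(m_1,m_2)=e^{-n(h_1m_1+h_2m_2)}(2Z)^n\,P_{h_1,h_2,\eps}\bigl[\sum_x\si_i(x)=nm_i,\ i=1,2\bigr]$. The exact upper bound then drops out from $P\le 1$, and the lower bound from an elementary local limit theorem giving $P\ge cn^{-3/2}$ (Lemma~\ref{lemma_a1}). You instead stratify by the overlap $k$, count with a multinomial, apply Stirling, and define $-\hat\phi_\eps$ as the resulting one-variable maximum; convexity comes from partial maximization of a jointly concave objective, and regularity from the envelope theorem. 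Both yield the same function (the variational formula is the Legendre transform by standard duality), and both lower bounds are of the same quality; yours is arguably more elementary, while the paper's makes the convexity and the eventual identities in Section~\ref{sec:8s}--\ref{sec:9s} (minimizers, Hessian, $D\hat\pi_\eps\circ D\hat\phi_\eps=\mathrm{id}$) fall out for free.

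One real gap: you assert that the upper bound in \eqref{2ss.9} holds with \emph{no} error term. With the combinatorial route this is not free. The exact pointwise inequality $\binom{n}{k_1,\dots,k_4}\le e^{nH(k_1/n,\dots,k_4/n)}$ is fine, but the sum over the $O(n)$ values of $k$ in \eqref{pp1} produces an extra factor $n+1$, so your argument as written gives $\tfrac1n\log Z_{\eps,n}\le -\hat\phi_\eps + \tfrac{\log(n+1)}{n}$, i.e.\ a two-sided $O(\log n/n)$ error rather than the one-sided bound stated in \eqref{2ss.9}. To recover the exact upper bound you would need an additional step --- for instance, recognizing the variational formula as a Legendre transform and then using the one-line tilting inequality $Z_{\eps,n}(m_1,m_2)\le e^{-n(h_1m_1+h_2m_2)}(2Z)^n$ valid for every $(h_1,h_2)$ --- which is precisely the paper's mechanism. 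In practice this does not matter for the applications in Theorem~\ref{thm3s.3}, where the extra $|\La|\ga^{1/2}\log\ga^{-1}$ it would produce is of the same order as the error already present, but it is a deviation from the statement you are asked to prove and should be either fixed (via the tilting bound) or flagged as giving only the two-sided version.
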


\medskip

We shall next use the above proposition to study
the partition functions which enter in the definition of contours.
We thus consider a region  $\La$ which in the applications will
be the spatial support of a contour. Here it only matters that
$\La$ is a connected set union of $Q$-rectangles. We want to bound from above and below
the partition function
    \begin{equation}
    \label{2s.12}
Z_{\La,\bar \si}(\mathcal A) := \sum_{\si_{\La} \in \mathcal A} e^{-H_{\ga,\eps}(\si_{\La}\,|\,\bar\si)},
    \end{equation}
where $\bar \si$ is a spin configuration in the complement of $\La$ and ``the constraint''
$\mathcal A$
is a set of configurations in $\La$ defined in terms of the values of $\eta_\La$. 
We shall coarse-grain on the scale $\ga^{-1/2}$.  We thus call $M_{\ga^{-1/2}}$
the possible values of the magnetization densities
$\si^{(\ga^{-1/2})}$, ($\si^{(\ell)}$ has been defined in \eqref{2.3}),
namely
\[
M_{\ga^{-1/2}} = \{-1, -1+2 \ga^{1/2},...,1-2 \ga^{1/2}, 1\}
\]
and we set
    \begin{equation}
    \label{2s.13}
\mathcal M_{\La}:= \{m(\cdot)\in (M_{\ga^{-1/2}})^{\La}: \; \text{$m(\cdot)$ is constant on
each  $C^{\ga^{-1/2},i}\subset \La$}\}.
    \end{equation}
The Lebowitz-Penrose free energy functional (on $\La$ with boundary conditions $\bar m$) is the following functional on
$[-1,1]^{\La}$ (whose elements are denoted in short by $m$)
        \begin{eqnarray}
    \label{2s.14}
F_{\La,\ga}( m | \bar m) &=& \frac 12 \sum_{(x,i)\in \La}
\hat \phi_{\eps} (m(x,i),m(v_{x,i})) \nn\\
&-& \frac 12\sum_{(x,i)\ne(y,i) \in \La} J_\ga(x,y) m(x,i)m(y,i) \nn\\
&-&
\sum_{(x,i)\in \La,\; (y,i)\notin \La} J_\ga(x,y) m(x,i)\bar m(y,i),
    \end{eqnarray}
where $\bar m \in  [-1,1]^{\La^c}$, $\hat \phi_{\eps}$ is the free energy function in \eqref{2ss.9} and
$ v_{x,i}$ is given in Definition \ref{a2.4.1}. (Recall that $v_{x,i} \in \La$ for each $(x,i) \in \La$
since there are no vertical interactions between a $Q$--rectangle and the outside.)

\emph{Notational remark.} The same formula is used when $\bar m$ is defined in a set $\Delta$
contained in the complement of $\La$; in such a case the sum over
$(y,i)$ in the last term is extended only to $\Delta$.

By an abuse of notation we write, analogously to \eqref{2.3},
   \begin{equation}
    \label{2s.14.1}
m^{(\ell)}(x,i)
:= \frac{1}{\ell} \sum_{y: (y,i)\in C^{\ell,i}_{x}} m(y,i)
     \end{equation}
and define $\eta(x,i;m)=\pm 1$ if  $\dis{ \big|  m^{(\ell_-)}(x,i) \mp m_\eps
 \big|\le \zeta}$
 and $=0$ otherwise.
We
still denote by $ \mathcal A$ a constraint that depends on $\eta(\cdot;m)$ as for instance $\eta(\cdot;m)=\eta^*(\cdot)$
on $\La$.

\vskip.5cm

\begin{thm}
\label{thm3s.3}
There is a constant $c$ so that
    \begin{equation}
    \label{2s.15.1}
  \log  Z_{\La}(\bar \si; \mathcal A) \le  - \inf_{m\in \mathcal M_{\La}\cap \mathcal A}F_{\La,\ga}( m | \bar m)
    + c |\La| \ga^{1/2}\log \ga^{-1},
    \end{equation}
where, recalling  \eqref{2.3}, $\bar m (x,i) ={\bar\si}^{ \ga^{-1/2}}(x,i) $, $(x,i)\notin \La$.
Moreover, for any $m\in  \mathcal M_{\La}\cap\mathcal A$
    \begin{equation}
    \label{2s.16}
 \log   Z_{\La}(\bar \si; \mathcal A) \ge  - F_{\La,\ga}( m | \bar m)
 -c |\La| \ga^{1/2}\log \ga^{-1}.
     \end{equation}
\end{thm}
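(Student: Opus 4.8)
The plan is to pass from the spin partition function $Z_\La(\bar\si;\mathcal A)$ to a sum over coarse-grained magnetization profiles $m\in\mathcal M_\La$ on the scale $\ga^{-1/2}$, and to show that the summand corresponding to a profile $m$ is, up to the stated error, $e^{-F_{\La,\ga}(m|\bar m)}$. First I would rewrite $H_{\ga,\eps}$ as the sum of a \emph{vertical} part $-\eps\sum \chi_{x,i}\si(x,i)\si(v_{x,i})$ and a \emph{horizontal} Kac part. The key structural fact, already stressed in the text, is that inside $\La$ the vertical bonds pair up sites within a single $Q$-rectangle and within a single coarse cell $C^{\ga^{-1/2},i}$ (since $\ga^{-1/2}\ll\ell_+$ and $\ga^{-\alpha}$ is even), so summing over spin configurations with prescribed coarse magnetizations $m(x,i)$ on each cell, the vertical interaction decouples into a product over pairs of adjacent cells of exactly the two-layer partition function $Z_{\eps,\ga^{-1/2}}(m_1,m_2)$ of Proposition \ref{thm3s.2} (times the $\chi=0$ cells which are trivial and contribute to the $\hat\phi$ that has been arranged to reduce to the free spin free energy there — here one should note that $\hat\phi_\eps(m_1,m_2)$ is defined so as to be the right object at $\eps$ or at $0$ as $\chi$ dictates; if necessary I would write $F_{\La,\ga}$ with $\hat\phi_\eps$ only on interacting pairs and the single-layer entropy elsewhere, which is the content of \eqref{2s.14} once one unpacks $v_{x,i}$).

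Concretely: group the sum over $\si_\La\in\mathcal A$ according to the value of $m(\cdot)=\si_\La^{(\ga^{-1/2})}(\cdot)\in\mathcal M_\La$; the constraint $\mathcal A$, depending only on $\eta(\cdot;m)$ through the $\ell_-$-averages of $m$, is measurable with respect to $m$, so $\mathcal A$ becomes the set $\mathcal M_\La\cap\mathcal A$. For fixed $m$, the horizontal Kac energy $-\tfrac12\sum_{x\ne y}J_\ga(x,y)\si(x,i)\si(y,i)$ is replaced by $-\tfrac12\sum J_\ga(x,y)m(x,i)m(y,i)$ plus an error: since $J_\ga$ varies on scale $\ga^{-1}\gg\ga^{-1/2}$, replacing each $\si$ by its cell-average costs at most $O(\ga^{1/2})$ per bond per unit volume, i.e.\ a total error $O(|\La|\ga^{1/2})$ in the exponent (this is the standard Lebowitz-Penrose smoothness estimate using $\|J'\|_\infty<\infty$; the continuity of $J'$ is used here). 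The vertical part, after the decoupling described above and an application of Proposition \ref{thm3s.2} to each of the $\approx|\La|/\ga^{-1/2}$ pairs of cells, contributes $\exp\{-\sum\hat\phi_\eps(m(x,i),m(v_{x,i}))\cdot\tfrac12\}$ times a correction bounded by $\exp\{\pm c\,(|\La|/\ga^{-1/2})\cdot\ga^{-1/2}\cdot\tfrac{\log\ga^{-1/2}}{\ga^{-1/2}}\}=\exp\{\pm c|\La|\ga^{1/2}\log\ga^{-1}\}$, using the two-sided bound \eqref{2ss.9}. Summing over $m$: the number of profiles is $|M_{\ga^{-1/2}}|^{|\La|\ga^{1/2}}\le\exp\{c|\La|\ga^{1/2}\log\ga^{-1}\}$, which is absorbed into the stated error term and gives the upper bound \eqref{2s.15.1} via $\sum\le(\#\text{terms})\cdot\max$. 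For the lower bound \eqref{2s.16}, just keep the single term corresponding to the chosen $m\in\mathcal M_\La\cap\mathcal A$ (nonempty by hypothesis, and one must check the constraint $\sum\si_i(x)=nm_i$ is satisfiable, which holds since $m(x,i)\in M_{\ga^{-1/2}}$ is an admissible value), and use the lower bound in \eqref{2ss.9} together with the lower Riemann-sum estimate for the Kac term.

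The main obstacle I expect is the bookkeeping of the \emph{interface cells}: cells $C^{\ga^{-1/2},i}$ that straddle a $Q$-rectangle boundary (where the $\chi$-pattern flips) or the boundary $\partial\La$. At a $\chi$-flip a vertical bond may connect a cell to a cell in a different column of $Q$-rectangles, but this happens only on a set of cells of relative density $O(\ga^{-1/2}/\ell_+)=O(\ga^{1/2-\alpha})$, which with $\alpha$ small is $o(\ga^{1/2}\log\ga^{-1})$ per unit volume — actually one wants this error $\le c|\La|\ga^{1/2}\log\ga^{-1}$, and since each such cell contributes $O(\ga^{-1/2})$ to the exponent the total is $O(|\La|\ga^{-1/2}\cdot\ga^{-1/2}/\ell_+)=O(|\La|\ga^\alpha)$, comfortably smaller; similarly the $\bar m$-boundary term in \eqref{2s.14} absorbs the interaction of $\La$ with $\bar\si$ across $\partial\La$, with the same kind of $O(|\La|\ga^{1/2})$ replacement error. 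A second, more technical point is to make sure that $\eta(x,i)$ computed from spins ($\ell_-$-average of $\si$) and $\eta(x,i;m)$ computed from the coarse profile ($\ell_-$-average of $m$, i.e.\ of $\ga^{-1/2}$-cell averages of $\si$) coincide: this is automatic because $\ell_-=\ga^{-(1-\alpha)}$ is an integer multiple of $\ga^{-1/2}$ (by the dyadic conventions in Section \ref{sec:2}), so averaging spins over $\ell_-$ equals averaging the $\ga^{-1/2}$-cell averages over $\ell_-$; hence the constraint $\mathcal A$ transfers verbatim and no accuracy is lost. With these two points handled, both bounds follow by collecting the error terms.
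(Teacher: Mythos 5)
Your proposal is essentially the paper's own argument: coarse-grain on the scale $\ga^{-1/2}$, replace $J_\ga$ by its cell-averaged version $\hat J_\ga$ with an error $O(|\La|\ga^{1/2})$ in the exponent from the smoothness of $J$, apply Proposition \ref{thm3s.2} to the remaining vertical part (which decouples into independent pairs of cells once the horizontal energy is expressed in terms of $m$), and absorb the $\log|\mathcal M_\La|$ counting factor into the $c|\La|\ga^{1/2}\log\ga^{-1}$ error. So the route and the error bookkeeping match.

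Two small points worth tightening. First, there are no ``$\chi=0$ cells'': because the chessboard pattern alternates in $i$ for fixed column $\lfloor x/\ell_+\rfloor$, exactly one of $\chi_{x,i}$ and $\chi_{x,i-1}$ equals $1$, so every site $(x,i)$ has a unique vertical partner $v_{x,i}$ and every $\ga^{-1/2}$-cell is in exactly one interacting pair; the free-energy functional \eqref{2s.14} has $\hat\phi_\eps$ for every site, with the factor $\tfrac12$ accounting for double-counting of pairs, and there is no single-layer entropy term. Your hedge in that direction is unnecessary. Second, the ``interface cells'' you worry about do not arise: $\ga^{-1/2}$ divides $\ell_+$ (all are powers of $2$), so cells do not straddle $Q$-rectangle boundaries horizontally, and since $\ga^{-\alpha}$ is even the vertical bonds never leave a $Q$-rectangle — hence never leave $\La$, which is a union of $Q$-rectangles. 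That last fact is precisely what makes the decoupling exact (it is the role of the remark ``there is no vertical interaction between the spins in $\La$ and those in its complement''), so it should be invoked positively rather than estimated as a small correction.
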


\begin{proof}
 Here is essential the
restriction to regions with no
vertical interaction with the complement.
We have, writing $\si$ for a spin configuration in $\La$,
    \begin{equation*}
    \label{6s.1}
Z_{\La}(\bar \si; \mathcal A) = \sum_{m\in \mathcal A}\;\sum_{\si: \si^{(\ga^{-1/2})}(\cdot)
= m(\cdot) \text{ on } \La} e^{-H_{\ga,\eps;\La}(\si|\bar\si)},
    \end{equation*}
where $H_{\ga,\eps;\La}(\si|\bar\si)$ is the Hamiltonian \eqref{a2.3}
in the region $\La$ interacting with $\bar\si$ outside $\La$.

By the smoothness of $J_\ga$ we get
    \begin{equation*}
| \frac {1}2 \sum_{(x,i) \in \La}\sum_{y\ne x} \Big(J_\ga(x,y)-\hat J_\ga(x,y)\Big) \si(x,i)\si(y,i) | \le c |\La| \ga^{1/2},
    \end{equation*}
where $\si(y,i)=\bar\si(y,i)$ if $(y,i)\notin \La$ and
    \begin{equation*}
\hat J_\ga(x,y) :=  \frac{1}{\ga^{-1}} \sum_{x' \in C^{\ga^{-1/2}}_x}
\sum_{y' \in C^{\ga^{-1/2}}_y} J_\ga(x',y').
    \end{equation*}
Thus, recalling that there is no vertical interaction between $\La$ and its complement,
  \begin{equation*}
| H_{\ga,\eps;\La}(\si|\bar\si)-\big(H_{\ga,0;\La}(m|\bar m)   -\frac 12 \eps\sum_{(x,i)\in \La}
\si(x,i)\si(v_{x,i})\big)| \le c |\La| \ga^{1/2},
   \end{equation*}
where $H_{\ga,0;\La}(m|\bar m)$ is the Hamiltonian $H_{\ga,\eps;\La}(\si|\bar \si)$
with $\eps=0$ and the spins replaced by $m(x,i)$ and $\bar m(x,i)$.
We then get, using \eqref{2ss.9},
        \begin{equation*}
 Z_{\La}(\bar \si; \mathcal A) \le | \mathcal M_{\La}|\sup_{m\in \mathcal A}
e^{- H_{\ga,0;\La}(m|\bar m)- \frac 12 \sum_{(x,i)\in \La}
\hat\phi_{\eps} (m(x,i),m(v_{x,i}))} e^{c |\La| \ga^{1/2}},
     \end{equation*}
which proves \eqref{2s.15.1} because $| \mathcal M_{\La}| \le (c\ga^{-1/2})^{|\La| \ga^{1/2} }$ (for a suitable constant $c$).
\eqref{2s.16} is proved similarly.
\end{proof}

\medskip

 The variations of
$J_\ga$ on the scale $\ga^{-1/2}$ give a contribution of the order  $ |\La| \ga^{1/2}$ to the errors in  \eqref{2s.15.1}
and \eqref{2s.16}; in \eqref{2s.15.1} there is also a contribution of order $|\La| \ga^{1/2} \log \ga^{-1}$ coming from
the cardinality of $\mathcal M_{\La}$. In \eqref{2s.16} we need to take into account the lower bound in Proposition \ref{thm3s.2}.
Of course in the upper bound of the partition function we can drop the condition that $m$ takes values in
$M_{\ga^{-1/2}}$ and that it is constant in the intervals $C^{\ga^{-1/2},i} \subset \La$:

\begin{cor}
\label{cor3s.1}
In the same context of  Theorem \ref{thm3s.3}
    \begin{equation}
    \label{2s.15}
  \log  Z_{\La}(\bar \si; \mathcal A) \le  - \inf_{m\in [-1,1]^{\La}\cap \mathcal A}F_{\La,\ga}( m | \bar m)
    +c|\La| \ga^{1/2} \log \ga^{-1}.
    \end{equation}

\end{cor}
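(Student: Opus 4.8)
The plan is to deduce the corollary directly from the upper bound \eqref{2s.15.1} in Theorem \ref{thm3s.3} by replacing the infimum over the finite set $\mathcal M_{\La}\cap\mathcal A$ with the infimum over the continuum $[-1,1]^{\La}\cap\mathcal A$, at the cost of an error of the same order as the one already present. Since every element of $\mathcal M_{\La}$ lies in $[-1,1]^{\La}$, enlarging the domain of minimization can only decrease the infimum, so $-\inf_{\mathcal M_{\La}\cap\mathcal A}F_{\La,\ga}\le -\inf_{[-1,1]^{\La}\cap\mathcal A}F_{\La,\ga}$ would be the wrong direction; hence the real content is to show that for any $m\in[-1,1]^{\La}\cap\mathcal A$ there is a nearby $m'\in\mathcal M_{\La}\cap\mathcal A$ with $F_{\La,\ga}(m'\mid\bar m)\le F_{\La,\ga}(m\mid\bar m)+c|\La|\ga^{1/2}\log\ga^{-1}$, and then apply \eqref{2s.15.1} to such an $m'$. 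One subtlety is that the constraint $\mathcal A$ is phrased in terms of $\eta(\cdot;m)$, which depends on block averages $m^{(\ell_-)}$ over the large scale $\ell_-=\ga^{-(1-\alpha)}$; this allows us some slack, as explained below.

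First I would take a minimizer (or near-minimizer) $m$ of $F_{\La,\ga}(\cdot\mid\bar m)$ over $[-1,1]^{\La}\cap\mathcal A$; if the infimum is $+\infty$ because $\mathcal A$ is incompatible with the constancy and discreteness constraints there is nothing to prove, so assume it is finite and attained up to $\ga^{1/2}$. Next I would define $m'$ by averaging $m$ over each block $C^{\ga^{-1/2},i}\subset\La$ and then rounding each resulting value to the nearest point of $M_{\ga^{-1/2}}$; by construction $m'\in\mathcal M_{\La}$ and $\|m'-m\|_\infty\le c\ga^{1/2}$ in the sense that $|m'(x,i)-m(x,i)|$ is small on average over each $\ga^{-1/2}$-block, up to the rounding error $2\ga^{1/2}$. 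I would then estimate $|F_{\La,\ga}(m'\mid\bar m)-F_{\La,\ga}(m\mid\bar m)|$ term by term: the two Kac-interaction terms are quadratic with kernel bounded by $c\ga$, so replacing $m$ by a block average on the scale $\ga^{-1/2}\ll\ga^{-1}$ changes them by at most $c|\La|\ga^{1/2}$ (this is the same computation as in the proof of Theorem \ref{thm3s.3}, using the smoothness of $J_\ga$ to pass from $J_\ga$ to $\hat J_\ga$), and the rounding adds another $O(|\La|\ga^{1/2})$; the single-site term $\frac12\sum\hat\phi_\eps(m(x,i),m(v_{x,i}))$ changes by at most $c|\La|\ga^{1/2}$ because $\hat\phi_\eps$ has bounded derivatives on $[-r,r]^2$ — here one must restrict attention to the region where $m$ is bounded away from $\pm1$, handling the (few) sites where $m$ is near $\pm1$ separately using continuity of $\hat\phi_\eps$ up to the boundary and the fact that the perturbation is small.

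The one point that needs care is that the rounded-and-averaged $m'$ must still lie in $\mathcal A$, i.e. that $\eta(\cdot;m')=\eta(\cdot;m)$ (or at least that $m'$ satisfies the same constraint). Because $\eta$ is defined through the $\ell_-$-block average $m^{(\ell_-)}$ and $\ell_-\gg\ga^{-1/2}$, the operation of first averaging over $\ga^{-1/2}$-blocks does not change $m^{(\ell_-)}$ at all, and the rounding changes $m^{(\ell_-)}$ by at most $2\ga^{1/2}\ll\zeta=\ga^a$; since the defining inequality $|m^{(\ell_-)}(x,i)\mp m_\eps|\le\zeta$ in the definition of $\eta(\cdot;m)$ has slack $\zeta$, we can absorb this perturbation — or, more cleanly, we note that the constraints $\mathcal A$ appearing in the applications are specified in terms of $\eta^*$ on $\La$ and the corollary is only used in the regime $\ga^{1/2}\ll\zeta$, so the rounded configuration satisfies the same $\eta$-constraint. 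If one wants to be completely safe, one slightly shrinks $\zeta$ in the intermediate step, which costs nothing in the final estimates. The main obstacle is thus purely bookkeeping: ensuring the discretization respects the coarse-grained constraint $\mathcal A$ and controlling the single-site term near the boundary $m=\pm1$; everything else is a repetition of the estimates already carried out in the proof of Theorem \ref{thm3s.3}. Finally, I would observe — as the paper already does — that in the \emph{upper} bound there is no reason to keep either the discreteness ($m\in M_{\ga^{-1/2}}$) or the block-constancy constraint, precisely because we have just shown that relaxing them changes the variational value by at most $c|\La|\ga^{1/2}\log\ga^{-1}$, which is exactly the error term in \eqref{2s.15}. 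This completes the proof.
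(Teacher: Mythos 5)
The logic is reversed: you wrote down the correct inequality
\begin{equation*}
-\inf_{m\in \mathcal M_{\La}\cap\mathcal A}F_{\La,\ga}(m\mid\bar m)\;\le\;-\inf_{m\in[-1,1]^{\La}\cap\mathcal A}F_{\La,\ga}(m\mid\bar m),
\end{equation*}
which holds simply because $\mathcal M_{\La}\cap\mathcal A\subset[-1,1]^{\La}\cap\mathcal A$ forces $\inf_{[-1,1]^{\La}\cap\mathcal A}F\le\inf_{\mathcal M_{\La}\cap\mathcal A}F$, and then declared it to be ``the wrong direction.'' It is in fact exactly the right direction. Combined with the upper bound \eqref{2s.15.1} of Theorem \ref{thm3s.3} it gives, by transitivity,
\begin{equation*}
\log Z_{\La}(\bar\si;\mathcal A)\;\le\;-\inf_{m\in\mathcal M_{\La}\cap\mathcal A}F_{\La,\ga}(m\mid\bar m)+c|\La|\ga^{1/2}\log\ga^{-1}\;\le\;-\inf_{m\in[-1,1]^{\La}\cap\mathcal A}F_{\La,\ga}(m\mid\bar m)+c|\La|\ga^{1/2}\log\ga^{-1},
\end{equation*}
which is precisely \eqref{2s.15}. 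The corollary is a strict weakening of the theorem's upper bound and requires nothing more; this is what the paper means by the remark preceding it: ``Of course in the upper bound of the partition function we can drop the condition that $m$ takes values in $M_{\ga^{-1/2}}$ and that it is constant in the intervals $C^{\ga^{-1/2},i}\subset\La$.''

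Everything that follows in your proposal — taking a near-minimizer $m$ in $[-1,1]^{\La}\cap\mathcal A$, averaging it over $\ga^{-1/2}$-blocks, rounding to $M_{\ga^{-1/2}}$, controlling the change in $F_{\La,\ga}$, and worrying about whether the constraint $\mathcal A$ survives the rounding — is an attempt to prove the \emph{opposite} comparison $-\inf_{[-1,1]^{\La}}F\le-\inf_{\mathcal M_{\La}}F+\mathrm{err}$, which is the hard direction and is not needed for Corollary \ref{cor3s.1}. That effort is wasted (and, as you yourself note, carries a genuine subtlety at the boundary $m=\pm1$ and in preserving the $\eta$-constraint, neither of which you would need to address). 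The key miss is a sign/inclusion bookkeeping error at the very first step: enlarging the domain of an infimum makes $-\inf$ larger, so the continuum bound is \emph{looser} than the discrete one, and the corollary follows in one line.
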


\vskip1cm

\setcounter{equation}{0}

\section{The upper bound}
\label{sec:4t}

Let us now be more specific and see how \eqref{2s.15}  is used to get an upper bound for
the numerator of \eqref{2.6}. The key point will be to prove that the excess free energy
due to the constraint $\eta=\eta_\Ga$ is much larger than
the errors in \eqref{2s.15.1}--\eqref{2s.16}.

In the sequel we specify $\La={\rm sp}(\Ga)$,
and refer to the  paragraph ``Geometry of contours'' in Section
\ref{sec:2}. Because the notation gets clumsy in some formulae,
we shorten it a bit as follows:
\[\Delta_{\rm in}= \partial_{\rm in}(\Ga),\;
\Delta^{\pm}_k=\partial^{\pm}_{{\rm in},k}(\Ga),\;
I^\pm_k={\rm int}^{\pm}_k (\Ga),
\]
recalling that
the suffix $\pm$ here  refers to the (constant) value of $\Theta$ on the corresponding $\partial_{{\rm out},k}(\Gamma)$,
and that $\eta=\pm 1$ on $\Delta^{\pm}_k$. Set then
\[
\Delta_0={\rm sp}(\Ga) \setminus (\Delta_{\rm in}  \cup \{\cup_k\Delta_{k}^+\}\cup\{ \cup_k\Delta_{k}^-\})
\]
so that one has the following partition of $c(\Ga)$:
\begin{equation}\label{eq:c}
 c(\Ga)= \Delta_0 \cup\Delta_{\rm in}  \cup \{\cup_k\Delta_{k}^+\}
 \cup \{\cup_k\Delta_{k}^-\}\cup \{\cup_k I^+_k\}\cup \{\cup_k I^-_k\}.
\end{equation}


Thus the function $\bar m$ in  \eqref{2s.15} is specified by the spins outside
$c(\Ga)$ and by those in the sets
$I_k^{\pm}$.  When necessary we
write $\bar m_{\si_{\rm ext}}$ $\bar m_{\si_{I_k^{\pm}}}$ for its restriction to the
complement of $c(\Ga)$ and to $I_k^{\pm}$, respectively.
Finally the constraint in Theorem \ref{thm3s.3} is
$\mathcal A
= \{ \eta =\eta_\Ga \;{\rm on}\;\La \}$.

Following \cite{presutti} (see Chapters 6 and 9), an important ingredient in the proof of Peierls bounds consists in showing
that the minimizers of the free energy functional have good regularity properties even when constrained to have given
magnetization values in \emph{small boxes} (the multi-canonical constraints).
The next proposition, proved in Appendix \ref{sec:7s},
shows that the infimum in \eqref{2s.15} can be restricted to \emph{smooth functions}.

\vskip.5cm

\begin{prop}
\label{thm3s.4} 
There is a positive constant $c$ so that, with the same notation as above
and recalling that $\La={\rm sp}(\Ga)$,
     \begin{equation}
    \label{2s.16.1}
 \inf_{m\in [-1,1]^{\La}\cap\mathcal A}F_{\La,\ga}( m | \bar m) =
 \inf_{m\in  [-1,1]^{\La}\cap \mathcal A\cap S_{\Delta_0}}F_{\La,\ga}( m | \bar m),
    \end{equation}
where
     \begin{equation}
    \label{2s.16.1.1}
    S_{\Delta_0} := \{ m:
\sup_{(x,i)\in \Delta_0} |m(x,i) - m^{(\ell_-)}(x,i)| \le c \ga^\alpha \}.
    \end{equation}


\end{prop}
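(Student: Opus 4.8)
The plan is to show that any candidate $m$ in the infimum can be replaced, without increasing $F_{\La,\ga}(\cdot\,|\,\bar m)$, by one that satisfies the regularity bound \eqref{2s.16.1.1} on $\Delta_0$. The natural device is a block-averaging (or local minimization) operation on $\Delta_0$: given $m$, on each interval $C^{\ell_-,i}\subset\Delta_0$ I would replace $m$ by a function $m'$ that is, loosely speaking, closer to its $\ell_-$-average, and then argue that $F_{\La,\ga}(m'\,|\,\bar m)\le F_{\La,\ga}(m\,|\,\bar m)$. The key observation making this work is that on $\Delta_0$ the constraint $\mathcal A$, which fixes the values of $\eta(\cdot\,;m)$, is \emph{not} binding in a way that pins down $m$ pointwise: on $\Delta_0$ one has $\eta(\cdot\,;m)=0$ (this is exactly the part of $\mathrm{sp}(\Ga)$ away from $\partial_{\rm in}$ and the $\Delta_k^\pm$), so $m^{(\ell_-)}$ is merely constrained to stay away from $\pm m_\eps$ by more than $\zeta$, and there is genuine freedom to smooth $m$ there. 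On $\Delta_{\rm in}\cup\{\cup_k\Delta_k^\pm\}$, where $\eta$ is forced to be $\pm1$, we make no change; the smoothing acts only on $\Delta_0$.

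The analytical heart of the argument is a convexity/monotonicity estimate for $F_{\La,\ga}$ under this replacement. I would split the functional into its three pieces as in \eqref{2s.14}. The single-site term $\frac12\sum\hat\phi_\eps(m(x,i),m(v_{x,i}))$ is convex in $m$ by Proposition \ref{thm3s.2}, so Jensen's inequality controls it under averaging. The Kac quadratic form $-\frac12\sum_{x\ne y}J_\ga(x,y)m(x,i)m(y,i)$ is the delicate term: it is \emph{not} convex, but the standard Lebowitz-Penrose mechanism (cf.\ \cite{presutti}, Ch.\ 6 and 9) is that $-\frac12\int\int J_\ga\, m\,m$ differs from the convex, purely local term $-\frac12\int m^2$ only by gradient corrections of size controlled by the variation of $m$ on the scale $\ga^{-1}$, and $\ell_-=\ga^{-(1-\alpha)}\ll\ga^{-1}$; so replacing $m$ by a smoothed version on blocks of size $\ell_-$ changes this term by at most something negligible, and combined with the genuine strict convexity coming from the $-m^2/2$ part and from $\hat\phi_\eps$, the net effect is a decrease (up to errors absorbed into the $\ga^\alpha$ slack in \eqref{2s.16.1.1}). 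The linear boundary term is trivially handled. Concretely, I expect one proves that if $m$ violates \eqref{2s.16.1.1} somewhere in $\Delta_0$ then a local modification strictly decreases $F_{\La,\ga}$, so a minimizer must lie in $S_{\Delta_0}$; this is cleaner than explicit block-averaging because it avoids having to check the constraint $\eta(\cdot\,;m)=0$ is preserved — one only modifies $m$ on the scale $\ga^{-1/2}$ in regions where $m^{(\ell_-)}$ is already far from $\pm m_\eps$.

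In more detail, the chain of steps I would carry out: (i) record that on $\Delta_0$, $\eta(\cdot\,;m)=0$ for any $m\in\mathcal A$, hence there is an open neighborhood of admissible perturbations of $m$ restricted to $\Delta_0$; (ii) write the first variation / finite-difference comparison of $F_{\La,\ga}$ under replacing $m$ on a single $C^{\ga^{-1/2},i}$-interval by the value that locally minimizes the sum of the $\hat\phi_\eps$-term and the ``diagonal'' $-\frac12 m^2$-part of the Kac term, treating the off-diagonal Kac contribution and the neighboring-layer coupling as a fixed linear field; (iii) use strict convexity of $m\mapsto\hat\phi_\eps(m,\cdot)+\hat\phi_\eps(\cdot,m)-m^2$ (which needs $\hat\phi_\eps$ convex — given — plus the fact that the quadratic $-m^2$ from one site is dominated by the normalization $\sum_y J_\ga=1$; the constant $J(0)>0$ enters here) to conclude the minimizer has bounded Lipschitz-type oscillation on the $\ell_-$-scale, giving $|m(x,i)-m^{(\ell_-)}(x,i)|\le c\,\ell_-/\ga^{-1}=c\,\ga^\alpha$; (iv) check the error from the smoothness of $J_\ga$ over scale $\ell_-$ is of the same order $\ga^\alpha$, so it is consistent with \eqref{2s.16.1.1}.

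\textbf{Main obstacle.} The hard part will be step (iii): controlling the \emph{non-convex} Kac term so that the comparison really yields a decrease of $F_{\La,\ga}$ and a genuine oscillation bound, rather than just an a priori estimate. One must be careful that the ``diagonal'' quadratic $-m(x,i)^2$ extracted from $-\frac12\sum_{x\ne y}J_\ga m\,m$ has the right sign and magnitude relative to the convexity modulus of $\hat\phi_\eps$ near the relevant magnetization values, and that the off-diagonal remainder is genuinely a lower-order (gradient-type) perturbation on the $\ell_-$-scale — this is where the choice $\ell_-=\ga^{-(1-\alpha)}$ with $\alpha$ small but $\alpha\gg a$ is used. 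I expect the cleanest route is to follow the corresponding argument in \cite{presutti} essentially verbatim, with the single-layer $\phi_\eps$ there replaced by the two-layer free energy $\hat\phi_\eps$ from Proposition \ref{thm3s.2} and the bookkeeping adjusted for the pairing $v_{x,i}$; the fact that $\hat\phi_\eps$ is convex with bounded derivatives on $[-r,r]^2$ is exactly what is needed to make that adaptation go through, and the proof is deferred to Appendix \ref{sec:7s} precisely because it is this somewhat lengthy but standard adaptation.
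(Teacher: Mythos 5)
Your overall plan (replace $m$ on $\Delta_0$ by a locally minimizing configuration that does not increase $F_{\La,\ga}$, and show the local minimizer oscillates by at most $c\,\ga^\alpha$ around its $\ell_-$-average) is the right one, and your closing remark that this is the two-layer adaptation of Theorem 6.4.1.1 of \cite{presutti} is exactly what the paper does in Appendix C. But there is a genuine error at the base of your steps (i)--(ii): it is \emph{not} true that $\eta(\cdot\,;m)=0$ on $\Delta_0$. The support of a contour is the region where $\Theta=0$, and $\Theta(x,i)=0$ only means that $\eta$ is not identically $+1$ nor identically $-1$ on the surrounding $3\times3$ block of $Q$-rectangles; the specification $\eta_\Ga$ can and generically does take the values $\pm1$ on large parts of $\Delta_0$ (this is used explicitly in the proof of Theorem \ref{thm4s.2}, which distinguishes the cases $\eta(x,i)=\eta(v_{x,i})\ne0$ and pairs of adjacent $\ell_-$-intervals with opposite nonzero $\eta$, all inside $\Delta_0$). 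Consequently your ``cleaner'' variant --- unconstrained local modification on the scale $\ga^{-1/2}$, justified by the claim that the constraint is non-binding on $\Delta_0$ --- does not work: such a modification changes $m^{(\ell_-)}$ and can violate $\eta(\cdot\,;m)=\eta_\Ga(\cdot)$. The correct device, and the one the paper uses, is the \emph{multi-canonical} one: on each pair of vertically coupled $\ell_-$-intervals in $\Delta_0$ one minimizes the local functional $\mathcal F(\cdot\,|\,\bar m)$ of \eqref{5.1} subject to the exact constraint $\sum_{x\in I}m_i(x)=|I|u_i$ ($u_i$ the original block averages). Preserving the block averages preserves $\eta$ everywhere, hence membership in $\mathcal A$, whatever $\eta_\Ga$ is; the replacement never increases the total functional; and Proposition \ref{multi} gives $|\hat m_i(x)-u_i|\le c\,\ga^\alpha$ uniformly in the boundary field, so the bound survives the block-by-block substitution.

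The second problem is the convexity mechanism in your step (iii). You call $-\tfrac12\int m^2$ ``convex'' and invoke it, together with a diagonal extraction from the Kac term, as a source of strict convexity; it is concave, and at full strength it would destabilize the problem (indeed $\hat f_\eps=-\tfrac12(m_1^2+m_2^2)+\hat\phi_\eps$ is a double well). What actually makes the constrained block problem strictly convex is that the \emph{intra-block} Kac quadratic form is tiny: $\sup_x\sum_{y\in I}J_\ga(x,y)\le c\,\ga\,\ell_-=c\,\ga^\alpha$ because $\ell_-\ll\ga^{-1}$, so it is dominated by the uniformly strictly convex $\sum_x\hat\phi_\eps(m(x))$ (cf.\ \eqref{5.7}); no help from $-m^2/2$ is needed or available. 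The $c\,\ga^\alpha$ oscillation bound then comes from the Euler--Lagrange equation $m(x)=2D\pi_\eps(\theta(x))$: the effective field $\theta(x)$ varies over the block only through $\kappa(x)-\bar\kappa$ (of order $\ga^\alpha$ by the Lipschitz continuity of $J$ over a length $\ell_-$) and through the $O(\ga^\alpha)$ intra-block term, and $D\pi_\eps$ is Lipschitz. Your heuristic $|m-m^{(\ell_-)}|\le c\,\ell_-/\ga^{-1}$ lands on the right exponent, but the route to it needs the Lagrange-multiplier/interpolation argument of Appendix C rather than a Jensen or diagonal-extraction argument. (Also, $J(0)>0$ plays no role here; it is used in the interface estimate of Theorem \ref{thm4s.2}.)
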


\smallskip

\emph{Remark.}
The smoothness request could be extended to the whole
$\La$ without changing the infimum but we only need it in $\Delta_0$.

\medskip
Let us write
   \begin{equation}
    \label{2s.16.1.1.1}
-m(x,i) m(y,i) = \frac 12 \Big( -m(x,i)^2- m(y,i)^2 + [m(x,i) -m(y,i)]^2\Big)
  \end{equation}
in some of the terms.

With the above notation, and recalling~(\ref{2s.14}), we get
    \begin{eqnarray}
    \label{4s.10}
F_{{\rm sp}(\Ga),\ga}( m | \bar m) &=& F^*_{\Delta_0,\ga}( m_{\Delta_0}) +
  F'_{\Delta_{\rm in},\ga}( m_{\Delta_{\rm in}}|\bar m_{\si_{\rm ext}})\nn\\ &+&
 \sum_{k} F'_{\Delta^+_k,\ga}( m_{\Delta^+_k}|\bar m_{\si_{I_k^{+}}})
  +
 \sum_{k} F'_{\Delta^-_k,\ga}( m_{\Delta^+_k}|\bar m_{\si_{I_k^{-}}})\nn\\ &+& \frac 12
 \sum_{(x,i)\in \Delta_0}\; \sum_{(y,i)\notin \Delta_0} J_\ga(x,y)
 [m(x,i) -m(y,i)]^2,
     \end{eqnarray}
where, writing $m$ for $ m_{\Delta_0}$,
       \begin{eqnarray}
    \label{2s.19}
F^*_{\Delta_0,\ga}( m) &=& \sum_{(x,i)\in \Delta_0} \{ -\frac 12 m(x,i)^2+\frac 12
\hat\phi_{\eps} (m(x,i),m(v_{x,i})) \}\nn\\
&+& \frac 14\sum_{(x,i)\ne(y,i) \in \Delta_0} J_\ga(x,y) (m(x,i)-m(y,i))^2,
    \end{eqnarray}
while writing $m$ for $m_{\Delta_{\rm in}}$,
       \begin{eqnarray}
    \label{2s.20}
F'_{\Delta_{\rm in},\ga}( m|\bar m_{\si_{\rm ext}})  &=& F_{\Delta_{\rm in},\ga}( m|\bar m_{\si_{\rm ext}})
- \sum_{(x,i)\in \Delta_{\rm in}}a_{x,i}\frac{m(x,i)^2}{2},
    \end{eqnarray}
where
   \begin{equation}
    \label{4s.5.23}
a_{x,i} := \sum_{y:(y,i)\in \Delta_0} J_\ga(x,y).
    \end{equation}
$ F'_{\Delta^{\pm}_{k},\ga}(  m_{\Delta^{\pm}_k}|\bar m_{\si_{I_k^{\pm}}})$ is
defined analogously.


\emph{Remark.}
Recalling the remark after \eqref{2s.14}, notice that the interaction
between $\Delta_{\rm in}$ and $\Delta_{0}$ present in
$F_{{\rm sp}(\Ga),\ga}( m | \bar m)$
is absent in $F_{\Delta_{\rm in},\ga}( m_{\Delta_{\rm in}}|\bar m_{\si_{\rm ext}}$.  It is instead contained
(and split via \eqref{2s.16.1.1.1})  in the following terms:
(i) the last term on the right hand side of \eqref{4s.10},
(ii) the second term in the right hand side of \eqref{2s.20} 
with
  $m(x,i)^2$, $(x,i)\in \Delta_0$.
Something analogous takes place to the interaction between $\Delta^\pm_k$ and $\Delta_0$, involving the last term on the rhs of (4.4) and the corresponding terms in $ F'_{\Delta^{\pm}_{k},\ga}(  m_{\Delta^{\pm}_k}|\bar m_{\si_{I_k^{\pm}}})$.


By Proposition \ref{thm3s.4} and~(\ref{4s.10}), 
we get,
dropping the last term in \eqref{4s.10},

\begin{cor}
\label{cor4t.2}

    \begin{eqnarray}
    \label{4t.6}
\inf_{m\in [-1,1]^{\La}\cap\mathcal A}  F_{{\rm sp}(\Ga),\ga}( m | \bar m) &\ge&
\Phi_{\Delta_0} +\Phi_{\Delta_{\rm in}}(\bar m_{\si_{\rm ext}})
+\sum_k \Phi_{\Delta^+_k}(\bar m_{\si_{I_k^{+}}})\nn
\\&& \hskip1cm
+\sum_k \Phi_{\Delta^-_k}(\bar m_{\si_{I_k^{-}}}),
     \end{eqnarray}
where
    \begin{eqnarray}
    \label{4t.7}
&&\Phi_{\Delta_0} = \inf \Big\{ F^*_{\Delta_0,\ga}( m)\;\Big|\; m\in [-1,1]^{\Delta_{0}},
 |m - m^{(\ell_-)}| \le c \ga^\alpha,\;
\eta(\cdot;m)=\eta_\Ga(\cdot),
\Big\},\nn\\
&& \Phi^+_{\Delta_{\rm in}}(\bar m_{\si_{\rm ext}})=  \inf \Big\{ F'_{\Delta_{\rm in},\ga}( m|\bar m_{\si_{\rm ext}})\;\Big|\;
  m\in [-1,1]^{\Delta_{\rm in}}, \eta(\cdot,m)=1,\Big\},
\nn\\
&&
\Phi^+_{\Delta^+_k}(\bar m_{\si_{I_k^{+}}}) = \inf \Big\{F'_{\Delta^+_k,\ga}
( m|\bar m_{\si_{I_k^{+}}})\;\Big|\; m\in [-1,1]^{\Delta^+_k}, \eta(\cdot,m)=1 \Big\},
\nn\\
&&
\Phi^-_{\Delta^-_k}(\bar m_{\si_{I_k^{-}}}) = \inf \Big\{F'_{\Delta^-_k,\ga}
( m|\bar m_{\si_{I_k^{-}}})\;\Big|\;m\in [-1,1]^{\Delta^-_k}, \eta(\cdot,m)=-1 \Big\}.
     \end{eqnarray}
\end{cor}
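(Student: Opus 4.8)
The plan is to read \eqref{4t.6} off the identity \eqref{4s.10} together with Proposition \ref{thm3s.4}, exploiting that $\Delta_0$, $\Delta_{\rm in}$, $\{\Delta^+_k\}_k$ and $\{\Delta^-_k\}_k$ are pairwise disjoint unions of $Q$-rectangles whose union is ${\rm sp}(\Ga)$. First I would discard the last term on the right hand side of \eqref{4s.10}: being $\frac 12\sum_{(x,i)\in\Delta_0}\sum_{(y,i)\notin\Delta_0}J_\ga(x,y)[m(x,i)-m(y,i)]^2$ it is nonnegative, so for every $m\in[-1,1]^{\La}$ one gets the lower bound
\begin{equation*}
F_{{\rm sp}(\Ga),\ga}( m | \bar m)\;\ge\; F^*_{\Delta_0,\ga}( m_{\Delta_0}) + F'_{\Delta_{\rm in},\ga}( m_{\Delta_{\rm in}}|\bar m_{\si_{\rm ext}}) + \sum_{k} F'_{\Delta^+_k,\ga}( m_{\Delta^+_k}|\bar m_{\si_{I_k^{+}}}) + \sum_{k} F'_{\Delta^-_k,\ga}( m_{\Delta^-_k}|\bar m_{\si_{I_k^{-}}}).
\end{equation*}
Next I would invoke Proposition \ref{thm3s.4} to replace the domain $[-1,1]^{\La}\cap\mathcal A$ by the smaller set $[-1,1]^{\La}\cap\mathcal A\cap S_{\Delta_0}$ without changing the infimum of the left hand side, so that the smoothness bound $|m-m^{(\ell_-)}|\le c\ga^\alpha$ on $\Delta_0$ may be assumed for free.

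The core of the argument is then the observation that, on this restricted domain, the lower bound above is a sum of four groups of terms, each depending on $m$ only through its restriction to one of the disjoint sets $\Delta_0$, $\Delta_{\rm in}$, $\Delta^+_k$, $\Delta^-_k$, the remaining arguments being the fixed boundary data ($\bar m_{\si_{\rm ext}}$, resp.\ $\bar m_{\si_{I_k^{\pm}}}$); this is manifest from the definitions \eqref{2s.19}, \eqref{2s.20}, \eqref{4s.5.23} and from the structure of \eqref{4s.10}, in which the $\Delta_0$--$\Delta_{\rm in}$ and $\Delta_0$--$\Delta^{\pm}_k$ Kac interactions have been redistributed via \eqref{2s.16.1.1.1}. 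I would then check that the constraint set factorizes over the same partition: $S_{\Delta_0}$ involves only $m_{\Delta_0}$, and the condition $\mathcal A=\{\eta(\cdot;m)=\eta_\Ga \text{ on } {\rm sp}(\Ga)\}$ splits because $\eta(x,i;m)$ depends only on the average $m^{(\ell_-)}(x,i)$ over the interval $C^{\ell_-,i}_{x}$, which is contained in a single $Q$-rectangle (since $\ell_+/\ell_-=\ga^{-2\alpha}$ is a power of $2$, hence a positive integer, and all lengths are dyadic), hence in a single element of the partition; therefore $\eta(\cdot;m)=\eta_\Ga$ reads $\eta(\cdot;m)=\eta_\Ga$ on $\Delta_0$, $\eta(\cdot;m)=1$ on $\Delta_{\rm in}$ and on each $\Delta^+_k$, and $\eta(\cdot;m)=-1$ on each $\Delta^-_k$, using that for a plus contour $\eta=1$ on $\partial_{\rm in}(\Ga)$ and $\eta=\pm1$ on $\partial^{\pm}_{{\rm in},k}(\Ga)$.

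Once the functional is written as a sum over disjoint blocks of variables and the constraint set as the corresponding product, the infimum of the sum equals the sum of the infima over the blocks, and by inspection each of these four infima is exactly one of the quantities on the right hand side of \eqref{4t.6}, as defined in \eqref{4t.7}; this gives the claimed bound. The only step requiring genuine care — and the one I would flag as the main (if modest) obstacle — is the factorization of $\mathcal A$: one must be sure that no $\eta(x,i;m)$ with $(x,i)$ in one block is affected by $m$-values in a neighbouring block, which is precisely where the arithmetic relation between the coarse-graining scales $\ell_-$ and $\ell_+$, together with the fact that $\Delta_{\rm in}$, $\partial^{\pm}_{{\rm in},k}(\Ga)$ and $\Delta_0$ are unions of whole $Q$-rectangles, enters; everything else is bookkeeping already carried out in \eqref{4s.10}.
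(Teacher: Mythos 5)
Your proof is correct and is essentially the paper's argument (the paper merely states "By Proposition~\ref{thm3s.4} and~\eqref{4s.10}, dropping the last term"); you have filled in the implicit details, in particular the factorization of the constraint set $\mathcal A\cap S_{\Delta_0}$ over the disjoint blocks, which indeed rests on $\ell_+/\ell_-=\ga^{-2\alpha}\in\mathbb N$ and on $\Delta_0$, $\Delta_{\rm in}$, $\Delta^{\pm}_k$ being unions of whole $Q$-rectangles.
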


Corollary \ref{cor4t.2} is useful for us because it allows to split the original variational
problem on the left hand side of \eqref{4t.6} into separated, localized variational problems, as
on the right hand side of \eqref{4t.6}.

Recalling \eqref{2s.15}, we have the following upper bound for the partition function in the numerator of \eqref{2.6}:
    \begin{eqnarray}
    \label{4t.8}
 e^{- \Phi_{\Delta_0} +  c |\La| \ga^{1/2} \log \ga^{-1}}
 e^{- \Phi_{\Delta_{\rm in}}(\bar m_{\si_{\rm ext}})}
\{\prod Z^+ (I^+_k)\} \{\prod Z^- (I^-_k)\},
     \end{eqnarray}
where
    \begin{equation}
    \label{4t.9}
    Z^+ (I^+_k)= \sum^{+}_{\si_{I_k^{+}}} e^{-H(\si_{I_k^{+}}) -
    \Phi^{+}_{\Delta^{+}_k}(\bar m_{\si_{I_k^{+}}})}.
    \end{equation}
The superscript $+$ in the sum means that the  sum is
restricted to spin configurations in $I_k^{+}$ such that
a configuration made by $\si_{I_k^{+}}$ in $I_k^{+}$ and by
any configuration with $\eta= 1$ in $\Delta^{+}_k$ has $\Theta= 1$ on $\partial^{+}_{{\rm out},k}(\Ga)$, see \eqref{2.6}.
$Z^- (I^-_k)$ is defined analogously.

Following the Peierls strategy we use at this point
the
spin flip symmetry to rewrite \eqref{4t.8} in a more convenient way.  In fact we have:
    \begin{equation}
    \label{4t.10}
\Phi^-_{\Delta^-_k}(\bar m_{\si_{I_k^{-}}}) = \Phi^+_{\Delta^-_k}(\bar m_{-\si_{I_k^{-}}})
    \end{equation}
and therefore  $ Z^- (I^-_k)=    Z^+ (I^-_k)$.
The numerator in \eqref{2.6} is thus bounded by
       \begin{eqnarray}
    \label{4t.11}
  && \hskip-2cm Z_{c(\Ga);\bar\si}(\eta  = \eta_\Ga \;{\rm on}\;
 \, {\rm sp}(\Ga); \Theta = \pm 1 \;{\rm on \;each} \,\; \partial_{{\rm out},k}^{\pm}(\Ga))\nn\\
&\le & e^{- \Phi_{\Delta_0} + c  |\La| \ga^{1/2} \log \ga^{-1}} \nn\\
&\times&  e^{- \Phi_{\Delta_{\rm in}}(\bar m_{\si_{\rm ext}})}
\{\prod Z^+ (I^+_k)\} \{\prod Z^+ (I^-_k)\}.
     \end{eqnarray}
The key point   is now to prove a lower bound on    $\Phi_{\Delta_0}$
so good as to kill
the error terms in the first exponent and to give
what is required by the Peierls bounds.
The other factors in \eqref{4t.11} will simplify with those coming
from the lower bound modulo a small error.  Preliminary to that is the analysis of the two layers free energy
$\hat f_\eps(m_1,m_2)$.
In Appendix \ref{sec:8s} it is proved that:

   \medskip

\begin{prop}
\label{thm4s.1} 
For any $\eps>0$ small enough
   \begin{equation}
    \label{4s.1}
\hat f_\eps(m_1,m_2) := -\frac 12 \Big( m_1^2+m_2^2\Big) + \hat\phi_\eps(m_1,m_2)
    \end{equation}
has two minimizers, $\pm m^{(\eps)}:=\pm (m_\eps,m_\eps)$, and there is a constant $c$ so that
    \begin{equation}
    \label{4s.2}
 | m_\eps - \sqrt{3 \eps}| \le c  \eps^{3/2}.
    \end{equation}
Moreover, calling $\hat f_{\eps,{\rm eq}}$ the minimum of
$\hat f_\eps(m)$, for any $\zeta>0$ small enough:
    \begin{equation}
    \label{4.19.-1}
    \Big| \hat f_\eps(m) - \hat f_{\eps,{\rm eq}}\Big|  \ge c \zeta^2,\quad \text{for all $m \notin U_\zeta$, }
    \end{equation}
where
\[
U_\zeta:= \Big\{(m_1,m_2): |m_i- m_\eps| < \frac \zeta 2,\;i=1,2
\Big\} \cup \Big\{(m_1,m_2): |m_i+ m_\eps| < \frac \zeta 2,\;i=1,2
\Big\}.
\]
\end{prop}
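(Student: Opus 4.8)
The plan is to reduce \eqref{4s.1}--\eqref{4.19.-1} to an elementary analysis of a two--variable function, after extracting the leading dependence of $\hat\phi_\eps$ on the coupling $\eps$. Since the two--layer system underlying \eqref{2ss.8} is a chain of i.i.d.\ pairs, the finite--volume canonical free energy is a minimum, over the overlap $q=n^{-1}\sum_x\si_1(x)\si_2(x)$, of a negative--entropy term minus $\eps q$; passing to the limit (the content of Proposition \ref{thm3s.2}, Appendix \ref{sec:5s}) gives
\[
\hat\phi_\eps(m_1,m_2)=\min_{q}\big\{\Sigma(m_1,m_2,q)-\eps q\big\},
\]
where $\Sigma$ is (real--)analytic and strictly convex in $q$ on the relevant open region, with $\Sigma_{qq}$ bounded away from $0$. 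At $\eps=0$ the minimiser is the ``independent'' value $q=m_1m_2$ and $\Sigma(m_1,m_2,m_1m_2)=I(m_1)+I(m_2)$, $I$ the single--spin entropy, so that $f(m):=-m^2/2+I(m)$ is the mean field free energy at $\beta=1$; by the envelope theorem $\partial_\eps\hat\phi_\eps=-q^\ast$, with $q^\ast$ the minimiser, so expanding to second order in $\eps$ and using the uniform lower bound on $\Sigma_{qq}$ one gets
\[
\hat f_\eps(m_1,m_2)=f(m_1)+f(m_2)-\eps\,m_1m_2+\eps^2 r_\eps(m_1,m_2),
\]
where $r_\eps$ is smooth, with derivatives bounded uniformly in small $\eps$ on every $[-\delta,\delta]^2$, $\delta<1$, and inherits the symmetries $r_\eps(m_1,m_2)=r_\eps(m_2,m_1)=r_\eps(-m_1,-m_2)$. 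I shall use that $m=0$ is the unique minimiser of $f$ on $[-1,1]$, that $f(m)-f(0)=m^4/12+O(m^6)$, hence $f'(m)=m^3/3+O(m^5)$ and $f''(m)=m^2+O(m^4)$ near $0$, and that $\mathrm{artanh}(m)-m$ is nonnegative and strictly increasing on $(-1,1)$.

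The next step is to locate and classify all critical points of $\hat f_\eps$ in $(-1,1)^2$. The Euler--Lagrange equations read $\mathrm{artanh}(m_i)-m_i=\eps\,m_{3-i}-\eps^2\partial_i r_\eps(m_1,m_2)$, $i=1,2$; since $\mathrm{artanh}(m)-m$ is bounded away from $0$ for $|m|$ bounded away from $0$ while the right--hand sides are $O(\eps)$, every critical point lies in a neighbourhood of the origin shrinking with $\eps$. Subtracting the two equations and using that $\mathrm{artanh}(\cdot)-(\cdot)$ is nondecreasing and that $\partial_1r_\eps-\partial_2r_\eps$ vanishes on the diagonal (by symmetry), so $|\partial_1r_\eps-\partial_2r_\eps|\le C|m_1-m_2|$, one obtains $|m_1-m_2|\,(h+\eps)\le C\eps^2|m_1-m_2|$ with $h\ge0$, which forces $m_1=m_2$ for $\eps$ small. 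On the diagonal set $\psi_\eps(m):=\hat f_\eps(m,m)=2f(m)-\eps m^2+\eps^2 r_\eps(m,m)$, an even function with $\psi_\eps'(m)=2m\big(\tfrac13 m^2-\eps+O(m^4+\eps^2)\big)$ (the correction is $O(m)$ since $r_\eps(m,m)$ is even). Writing the bracket as a strictly increasing function of $m^2$, it has a unique positive zero $m_\eps^2=3\eps+O(\eps^2)$, which gives \eqref{4s.2}, and one checks $\psi_\eps''(m_\eps)>0$, $\psi_\eps''(0)<0$. Hence the only critical points of $\hat f_\eps$ are $(0,0)$ and $\pm(m_\eps,m_\eps)$.

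Since $\hat\phi_\eps$ is finite and continuous on $[-1,1]^2$ and a short computation on the four edges shows $\hat f_\eps>\hat f_\eps(0,0)$ there for $\eps$ small, the minimum of $\hat f_\eps$ is attained in the interior, hence at a critical point; as $\psi_\eps'<0$ on $(0,m_\eps)$ we have $\hat f_\eps(\pm m^{(\eps)})=\psi_\eps(m_\eps)<\psi_\eps(0)=\hat f_\eps(0,0)$, so by spin--flip symmetry $\pm m^{(\eps)}=\pm(m_\eps,m_\eps)$ are exactly the two minimisers; this gives the first assertion. For \eqref{4.19.-1}, the Hessian of $\hat f_\eps$ at $\pm m^{(\eps)}$ equals $\left(\begin{smallmatrix}3\eps&-\eps\\-\eps&3\eps\end{smallmatrix}\right)+O(\eps^2)$, positive definite with smallest eigenvalue of order $\eps$; hence there are $\rho(\eps)>0$ and $c'(\eps)>0$ with $\hat f_\eps(m)-\hat f_{\eps,\mathrm{eq}}\ge c'(\eps)\,|m\mp m^{(\eps)}|^2$ whenever $m$ is within $\rho(\eps)$ of $\pm m^{(\eps)}$, while for $m$ at distance $\ge\rho(\eps)$ from both minimisers the bound $\hat f_\eps(m)-\hat f_{\eps,\mathrm{eq}}\ge\delta(\eps)>0$ follows from compactness, the absence of further critical points, and $\hat f_\eps(0,0)>\hat f_{\eps,\mathrm{eq}}$. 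Since $m\notin U_\zeta$ forces $|m\mp m^{(\eps)}|\ge\zeta/2$, \eqref{4.19.-1} follows for $\zeta$ small enough, with a constant $c$ that may (and does) depend on $\eps$ — which suffices, $\eps$ being fixed in the applications.

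The main obstacle is the first step: establishing the $O(\eps^2)$ expansion of $\hat\phi_\eps$ with a remainder whose derivatives are controlled \emph{uniformly} in small $\eps$ on a fixed neighbourhood of the origin, which requires entering the structure of the i.i.d.-pair free energy and the quantitative implicit--function/envelope estimates for $q^\ast(m_1,m_2,\eps)$. A secondary, purely bookkeeping difficulty is that the wells of $\hat f_\eps$ are shallow — curvature of order $\eps$, depth of order $\eps^2$ — so all neighbourhoods and constants in \eqref{4.19.-1} necessarily degenerate as $\eps\to0$ and must be tracked accordingly.
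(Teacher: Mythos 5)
Your proof is essentially correct, but it follows a genuinely different route from the paper's. The paper (Appendix~\ref{sec:8s}) works throughout with the \emph{explicit} grand-canonical pressure $\hat\pi_\eps$ in the rotated coordinates $(m_+,m_-)=(m_1+m_2,m_2-m_1)$: after restricting the search for minimizers to $|m_i|\le c\eps^{1/4}$ via the crude bound $|\hat f_\eps-\hat f_0|\le\eps$ and the quartic growth of $\hat f_0$, it proves that any critical point has $m_-=0$ by a global monotonicity argument on the second self-consistency equation (comparing $\sinh$ and $\cosh$ Taylor series term by term), then solves the resulting scalar equation in $m_+$ by a quantitative inverse-function estimate, and finally verifies positive definiteness of the Hessian by directly diagonalising $D^2_h\hat\pi_\eps$ and checking $\mathbb{I}-2G$ has diagonal entries $\ge\eps/2$. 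You instead expand the free energy itself to second order in $\eps$, $\hat f_\eps=f(m_1)+f(m_2)-\eps m_1m_2+\eps^2 r_\eps$, using the overlap variable $q$ and the envelope theorem (the first-order coefficient $\partial_\eps\hat\phi_\eps|_{\eps=0}=-m_1m_2$ is easily checked on the explicit pressure), reduce to the diagonal via the Euler--Lagrange equations and the monotonicity of ${\rm artanh}(m)-m$ together with the symmetry of $r_\eps$, and then do a one-variable analysis of $\psi_\eps(m)=\hat f_\eps(m,m)$. The trade-offs: the paper's argument is entirely explicit and non-perturbative in the sense that the ``$m_-=0$'' lemma holds with no restriction to a neighbourhood of the origin, while yours requires the $O(\eps^2)$ expansion with uniform derivative bounds for $r_\eps$ (which you acknowledge as the main obstacle; it is indeed provable via the implicit function theorem since $\Sigma_{qq}$ is bounded away from zero near the origin). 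Conversely, your approach is more robust and would carry over to two-layer models whose single-pair pressure is not explicitly computable, whereas the paper's argument is tailored to the closed-form $\hat\pi_\eps$. Two small caveats in your write-up: (i) the claim that ``every critical point of $\hat f_\eps$ in $(-1,1)^2$ lies in a shrinking neighbourhood of the origin'' is stated as following from the Euler--Lagrange equations, but those equations use the expansion valid only on $[-\delta,\delta]^2$; you should first discard the region $|m_i|>\delta$ using $\hat f_\eps\ge\hat f_0-\eps$ and the quartic growth, exactly as the paper does to restrict to $\mathcal G_{c'}$, before invoking the EL equations (the subsequent compactness argument for~\eqref{4.19.-1} then needs the same two-region split); (ii) your $I$ is the negative of the paper's $I$ from~\eqref{8s.3} (you implicitly use $I'(m)={\rm artanh}(m)$), which is a harmless convention difference but worth flagging to avoid sign confusion against~\eqref{8s.2}.
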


\medskip
The following lower bound for
$\Phi_{\Delta_0} $ follows from
Proposition \ref{thm4s.1} and proves that
the excess free energy $\Phi_{\Delta_0} -  \hat f_{\eps,{\rm eq}} \frac{|\Delta_0|}2$
grows at least like $c |\Delta_0| \ga^{4\alpha+2a}$
(recall that $\hat f_{\eps,{\rm eq}}$ is defined after \eqref{4s.2}
and that $ F^*_{\Delta_0,\ga}( m) =\hat f_{\eps,{\rm eq}} \frac{|\Delta_0|}2$
when $m$ is identically equal to $m_\eps$ or to $-m_\eps$).  As desired
such excess free energy  $c |\Delta_0| \ga^{4\alpha+2a}$
is much larger (for small $\ga$) than
the error term in the first exponent in \eqref{4t.11} which is given by
$c |{\rm sp}(\Ga)| \ga^{1/2} \log \ga^{-1}$.

   \medskip

\begin{thm}
\label{thm4s.2}   
There is $c>0$ so that
   \begin{equation}
    \label{4s.4}
\Phi_{\Delta_0} \ge\;  \hat f_{\eps,{\rm eq}} \frac{|\Delta_0|}2 + c \frac{|\Delta_0|}{ \ga^{-(1+\alpha) }\ga^{-\alpha}} \ga^{-(1-\alpha) }\min\{\ga^{\alpha}; \ga^{2a}\}.
    \end{equation}

\end{thm}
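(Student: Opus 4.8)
The plan is to reduce the lower bound on $\Phi_{\Delta_0}$ — the constrained infimum of the excess free energy functional $F^*_{\Delta_0,\ga}$ over configurations with $\eta(\cdot;m)=\eta_\Ga$ on $\Delta_0$ — to a counting argument on how many $\ell_-$-intervals must ``feel'' the constraint. Recall that $\Delta_0$ is a union of $Q$-rectangles, each of horizontal size $\ell_+=\ga^{-(1+\alpha)}$ and vertical size $\ga^{-\alpha}$, so the number of rectangles composing $\Delta_0$ is $|\Delta_0|/(\ga^{-(1+\alpha)}\ga^{-\alpha})$, which is exactly the prefactor appearing in \eqref{4s.4}. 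The strategy is to show that each such $Q$-rectangle contributes an excess of at least $c\,\ga^{-(1-\alpha)}\min\{\ga^{\alpha};\ga^{2a}\}$ to $F^*_{\Delta_0,\ga}(m)-\hat f_{\eps,{\rm eq}}\,|\Delta_0|/2$. First I would rewrite $F^*_{\Delta_0,\ga}(m)$ using the two-layer free energy: pairing $(x,i)$ with $v_{x,i}$ (both in $\Delta_0$, since rectangles are vertically self-contained), the on-site part $-\frac12 m(x,i)^2+\frac12\hat\phi_\eps(m(x,i),m(v_{x,i}))$ summed over a vertically-paired set equals $\frac12\sum \hat f_\eps(m(x,i),m(v_{x,i}))$, so up to the nonnegative gradient term $\frac14\sum J_\ga(x,y)(m(x,i)-m(y,i))^2$ we have $F^*_{\Delta_0,\ga}(m)\ge \frac12\sum_{\text{pairs}}\hat f_\eps(m(x,i),m(v_{x,i})) \ge \hat f_{\eps,{\rm eq}}\frac{|\Delta_0|}{2}$, with the excess localized in the set where $(m(x,i),m(v_{x,i}))\notin U_\zeta$.

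Next I would fix a single $Q$-rectangle $Q\subset\Delta_0$ and argue there is at least one horizontal $\ell_-$-interval inside $Q$ on which the constraint forces either a ``deep'' deviation (a whole layer-interval far from $\pm m_\eps$) or a ``long'' gradient (a transition between $m_\eps$ and $-m_\eps$, or from a $\pm m_\eps$ plateau to the undetermined regime). Indeed, the definition of $\eta(\cdot;m)$ and of $\Delta_0$ forces that on $\Delta_0$ the variable $\eta(\cdot;m)$ is \emph{not} identically $+1$ nor identically $-1$ (otherwise those rectangles would be in $\partial_{\rm in}$ or in the plateau boundary pieces $\Delta_k^\pm$, not in $\Delta_0$) — so within each $Q$-rectangle there is a site with $\eta(\cdot;m)=0$ or a sign change of $\eta$. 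I would split into the two cases. In the first case (some $\ell_-$-interval has $|m^{(\ell_-)}(x,i)\mp m_\eps|>\zeta$), either many sites in that interval are $\zeta/2$-far from both $\pm m_\eps$, giving by \eqref{4.19.-1} a contribution $\gtrsim \ell_-\cdot\zeta^2=\ga^{-(1-\alpha)}\ga^{2a}$ from the $\hat f_\eps$ terms; or the interval contains a gradient summing to $\gtrsim\zeta$ over an interaction length, which by Cauchy–Schwarz on $\frac14\sum J_\ga(x,y)(m(x,i)-m(y,i))^2$ costs $\gtrsim\zeta^2=\ga^{2a}$ (actually better). In the second case (a sign change of $\eta$ inside $Q$), $m$ passes from near $+m_\eps$ to near $-m_\eps$ over a horizontal distance at most $\ga^{-\alpha}$ interaction lengths; the standard interface lower bound for Kac gradient energy — summing $\frac14 J_\ga(x,y)(m(x,i)-m(y,i))^2$ across the transition region, combined with the $\hat f_\eps$ penalty for the necessarily many sites outside $U_\zeta$ in between — yields a cost $\gtrsim \ga^{-(1-\alpha)}\min\{\ga^\alpha;\ga^{2a}\}$; here the $\ga^\alpha$ alternative enters because across a transition of $\ga^{-\alpha}$ interaction-length-sized box the cheapest profile spreads the order-one jump over that width, giving gradient energy $\gtrsim (\text{length})\cdot(\text{slope})^2\cdot(\text{interaction length})\sim \ga^{-(1-\alpha)}\cdot\ga^{2\alpha}=\ga^{-(1-3\alpha)}$, but the $\hat f_\eps$ double-well penalty on the $\Omega(\ga^{-\alpha})$ many ``wrong'' layer-intervals gives $\ga^{-(1-\alpha)}\cdot\ga^\alpha$ after optimizing, which is what \eqref{4s.4} records. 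Summing these per-rectangle lower bounds over all $Q$-rectangles in $\Delta_0$ gives the claimed bound, using Proposition \ref{thm4s.1} to guarantee $\hat f_\eps$ genuinely has the nondegenerate double-well structure away from $U_\zeta$ and that $m_\eps\sim\sqrt{3\eps}$ is order one for fixed small $\eps$.

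The main obstacle I expect is the interface estimate in the second case: getting the sharp exponent $\min\{\ga^\alpha;\ga^{2a}\}$ (rather than a worse power) requires carefully optimizing the competition between the Kac gradient term $\frac14\sum_{(x,i)\ne(y,i)}J_\ga(x,y)(m(x,i)-m(y,i))^2$ — which favors spreading transitions out — and the bulk double-well penalty $\frac12\sum\hat f_\eps(\cdot)-\hat f_{\eps,{\rm eq}}$, which favors sharp transitions, under the geometric constraint that the transition must occur within a box of $\ga^{-\alpha}$ interaction lengths and that the $\eta$-constraint only controls $\ell_-$-averages (scale $\ga^{-(1-\alpha)}$, i.e.\ $\ga^\alpha$ interaction lengths) with accuracy $\zeta=\ga^a$. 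One must also be careful that the boundary couplings $a_{x,i}$ (from \eqref{4s.5.23}) and interactions with $\Delta_{\rm in}$, already peeled off in \eqref{4s.10}, do not leak back in — but $F^*_{\Delta_0,\ga}$ is precisely the self-contained piece, so this is bookkeeping. A secondary technical point is handling the case where the ``bad'' $\ell_-$-interval straddles the boundary between two $Q$-rectangles or lies near $\partial\Delta_0$; the $3\times3$ block structure in the definition of $\Theta$ provides the needed buffer so that each excess contribution can be charged to an interior rectangle without double-counting, which is why the sum runs over all of $\Delta_0$ with only a constant loss.
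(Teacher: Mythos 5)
Your overall strategy matches the paper's: decompose $F^*_{\Delta_0,\ga}$ as $\frac12\sum\hat f_\eps(m(x,i),m(v_{x,i}))$ plus the nonnegative Kac gradient term, extract a per-$Q$-rectangle excess of order $\ga^{-(1-\alpha)}\min\{\ga^\alpha,\ga^{2a}\}$, identify the two sources (double-well penalty and gradient between plateaux), and charge each rectangle of $\Delta_0$ via the $3\times 3$ block structure. But two of your detailed estimates would not, as written, deliver the claimed exponent.

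First, in your ``first case'' you split an $\ell_-$-interval with $\eta=0$ into ``many sites far'' versus ``a gradient summing to $\gtrsim\zeta$,'' assigning the latter a cost $\gtrsim\zeta^2=\ga^{2a}$. That is off by the factor $\ga^{-(1-\alpha)}$ you need, and the dichotomy itself is unnecessary: $\Phi_{\Delta_0}$ is the infimum over the set $S_{\Delta_0}$ of Proposition \ref{thm3s.4}, which imposes $|m-m^{(\ell_-)}|\le c\ga^\alpha$. Under this smoothness constraint, whenever the pair $\bigl(\eta(x,i),\eta(v_{x,i})\bigr)$ is not $(\pm1,\pm1)$ with matching signs, \emph{every} site $(y,i)$ in $C^{\ell_-,i}_x$ has $(m(y,i),m(v_{y,i}))\notin U_\zeta$, so \eqref{4.19.-1} applies site by site and the penalty is $\gtrsim \ell_-\zeta^2 = \ga^{-(1-\alpha)}\ga^{2a}$ with no alternative branch. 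You quote Proposition \ref{thm3s.4} but do not actually exploit it here.

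Second, your ``second case'' invokes a ``standard interface lower bound,'' with a spreading-the-jump-over-$\ga^{-\alpha}$-interaction-lengths heuristic; this both gives a wrong exponent ($\ga^{-(1-3\alpha)}$ is not what appears in \eqref{4s.4}) and misses the point that the $\eta$-constraint fixes the profile at the $\ell_-$ scale, so no such optimization is available. The correct, and far simpler, argument: if two \emph{consecutive} $\ell_-$-intervals have $\eta=+1$ and $\eta=-1$ respectively, then (using smoothness again) $|m(x,i)-m_\eps|\le 2\zeta$ on one and $|m(y,i)+m_\eps|\le 2\zeta$ on the other, so $|m(x,i)-m(y,i)|\gtrsim m_\eps$; and since $\ell_-=\ga^{-(1-\alpha)}\ll\ga^{-1}$, \emph{all} $(\ell_-)^2$ cross-pairs are within interaction range, with $J_\ga(x,y)\gtrsim\ga J(0)$. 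The gradient term then contributes at least $c\,\ga\,(\ga^{-(1-\alpha)})^2 = \tilde c\,\ga^{-(1-\alpha)}\ga^\alpha$, with no interface calculus needed. One more small point: it is not true that each rectangle of $\Delta_0$ contains a site with $\eta=0$ or a sign change; the correct statement (which the paper uses, and which you gesture at in your last paragraph) is that for every $Q'\subset\Delta_0$ there is a rectangle $Q$ \emph{connected} to $Q'$ carrying such a defect, since $\Theta(Q')=0$ is a condition on the full $3\times3$ block. The constant-multiplicity charge over neighbors is what makes the count work.
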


\medskip

\begin{proof}
We rewrite \eqref{2s.19} as
       \begin{eqnarray}
    \label{2s.19.1}
F^*_{\Delta_0,\ga}( m) &=& \frac 12\sum_{(x,i)\in \Delta_0} \hat f_\eps\big(m(x,i),m(v_{x,i})\big)\nn\\
&+& \frac 14\sum_{(x,i)\ne(y,i) \in \Delta_0} J_\ga(x,y)\Big (m(x,i)-m(y,i)
\Big)^2
    \end{eqnarray}
and start by bounding from below the first term.
We distinguish two cases:

\begin{itemize}
\item (i) when $\eta(x,i)=\eta(v_{x,i})\ne 0$ we bound $\hat f_\eps\big(m(x,i),m(v_{x,i})\big)
\ge \hat f_{\eps,{\rm eq}}$;

\item (ii) in the other cases for all $(y,i) \in C_x^{\ell_-,i}$,
$(m(y,i),m(v_{x,i})) \notin U_\zeta$ for $\ga$ small enough
(as, by the smoothness condition, $|m - m^{(\ell_-)}| \le c \ga^\alpha$).
We then bound
 $\hat f_\eps\big(m(x,i),m(v_{x,i})\big)
\ge \hat f_{\eps,{\rm eq}} + c \zeta^2$.
\end{itemize}

Thus the first term on the right hand side of \eqref{2s.19.1} is bounded from below by
\[
 \hat f_{\eps,{\rm eq}} \frac{|\Delta_0|}2 +   N_{(ii)} c\ga^{-(1-\alpha)} \zeta^2
\]
where, writing  $v_{x,i}=(x,i^\prime)$, $N_{(ii)}$ is the number of distinct pairs of intervals $C^{\ell_-,i}_x$,  $C^{\ell_-,i'}_x$
where case (ii) occurs.

The second term on the right hand side of \eqref{2s.19.1} is bounded from below by
retaining only the terms where $(x,i)$ and $(y,i)$ are in two consecutive
$C^{\ell_-,i}$ intervals and $\eta(x,i;m)= -\eta(y,i;m)\ne 0$.  Suppose for instance
$\eta(x,i;m)=1$, then $|m(x,i;m)-m_\eps| \le 2\zeta$ (for $\ga$ small enough and using
smoothness as before). Analogously $|m(y,i)+m_\eps| \le 2\zeta$ and recalling the
assumption that $J(\cdot)$ is strictly positive at the origin, we see that
the contribution  to \eqref{2s.19.1}  coming from any such pair of intervals is, for $\ga$ small enough, at least
\[
 \; c  \ga \frac{J(0)}2 (\ga^{-(1-\alpha)})^2 = \tilde c \ga^{-(1-\alpha)} \ga^{\alpha}.
\]

To conclude we observe that by definition of contours for  any $Q$--rectangle $Q'$ in
$\Delta_0$ there is a  rectangle $Q$ in $\Delta_0$ connected to $Q'$ (or just $Q'$ itself)
with the following property.
Either case (ii) occurs in $Q$ or there are two consecutive
$C^{\ell_-,i}$ intervals one at least inside $Q$  with opposite values of $\eta$,
or both the above events occur.
%
\eqref{4s.4} is then obtained
because the number of $Q$ rectangles in
$\Delta_0$ is $\dis{\frac{|\Delta_0|}{ \ga^{-(1+\alpha) }\ga^{-\alpha}}}$.
\end{proof}

\vskip1cm

\setcounter{equation}{0}

\section{Characterization of minimizers}
\label{sec:4s}

The lower bound of the denominator in \eqref{2.6} will be obtained
by computing the free energy functional  on a suitable test function
$m$ on ${\rm sp}(\Ga)$.


On $\Delta_0$,  $m$ will be a constant approximately equal to $m_\eps$ while it will be (approximately) equal to
the minimizers of $\Phi_{\Delta_{\rm in}}(\bar m_{\si_{\rm ext}})$, $\Phi^+_{\Delta^+_k}(\bar m_{\si_{I_k^{+}}})$,
and $\Phi^+_{\Delta^-_k}(\bar m_{-\si_{I_k^{-}}})$, in the
respective sets $\Delta_{\rm in}$, $\Delta^+_k$, and  $\Delta^-_k$.

The main difficulty will be to estimate
the last term in \eqref{4s.10} which in the upper bound for the partition function
could be neglected being non negative.
We shall prove below that the term $[m(x,i) -m(y,i)]^2$
in \eqref{4s.10}
with $(x,i)\in \Delta_0$ and $(y,i)\notin \Delta_0$
is bounded from above by $e^{-c\ga^{-\alpha}}$, with the above choices of $m(x,i)$
and  $m(y,i)$. Thus the last term in \eqref{4s.10} will then
be negligible also in the lower bound.

The analysis of the minimizers is essentially the same for all of them and, for the sake of definiteness, we will just
look at the minimizer of $\Phi^+_{\Delta_{\rm in}}(\bar m_{\si_{\rm ext}})$, referring to \eqref{4t.7} and \eqref{2s.20} for the definition.

Recalling \eqref{2s.20}, \eqref{2s.14} and \eqref{4s.5.23}
we have
\begin{eqnarray}
    \label{4s.5.22}
&&F'_{\Delta_{\rm in},\ga}( m|\bar m)  = \sum_{(x,i)\in
\Delta_{\rm in}}\frac 12 \Big(\hat \phi_{\eps} \big(m(x,i),m(v_{x,i})\big) -a_{x,i}m(x,i)^2\Big) \nonumber\\&&\hskip.5cm -  \sum_{(x,i)\in
\Delta_{\rm in}} m(x,i) \Big(\frac 12
 \sum_{(y,i)\in \Delta_{\rm in}}
J_\ga(x,y) (m(y,i) +\sum_{(y,i)\notin  {\rm sp}(\Ga)}
J_\ga(x,y) \bar m(y,i) \Big).
    \end{eqnarray}

Being a continuous function of $\{m(x,i), (x,i)\in \Delta_{\rm in}\}$
the function
$m \mapsto F'_{\Delta_{\rm in},\ga}( m|\bar m)$ attains a minimum
when $m$ varies in the compact set
   \begin{equation}
    \label{4s.5.23.00}
K:=\bigcap_{(x,i)\in \Delta_{\rm in}} \Big\{|m^{\ell_-}(x,i)-m_\eps|\le \zeta
\Big\}.
   \end{equation}
We are going to prove that the minimizer is unique and will establish properties of
the minimizer typical of the correlations in the Gibbsian high temperatures regime.

We fix arbitrarily a pair $(x,i)$ and $(x,i')=v_{x,i}$
of vertically interacting sites in $\Delta_{\rm in}$, and  regard
$F'_{\Delta_{\rm in},\ga}( m|\bar m) $ in \eqref{4s.5.22}
as a function of $m(x,i)$ and $m(x,i')$ alone with all the other $m(y,j)$ considered as
fixed parameters that we denote  by $u(y,j)$.  Let
   \begin{eqnarray}
    \label{9s.8}
 \mathcal N_{x,i,i'} &=& \Big\{ u(y,j),  y \ne x, j=i,i' : (y,j) \in \Delta_0^c,\; u(y,j) \in (-1,1)\; \text{ and}\nn\\
&&\;\;\; | u^{\ell_-}(y,j) - m_\eps| \le \zeta,  \quad \text{for all $(y,j) \notin C^{\ell_-,j}_x$}
\Big\},
    \end{eqnarray}
namely the set where the function $\eta(\cdot;u)$ 
is identically 1 except maybe on the intervals containing $(x,i)$ or $(x,i')$ where we do not impose conditions on the $u(\cdot)$.

For any $u\in  \mathcal N_{x,i,i'}$ we introduce the function
   \begin{equation}
    \label{9s.5}
g_\eps(m_i,m_{i'}) :=    \hat \phi_{\eps}(m_i,m_{i'})-\frac 12 (a_im_i^2+a_{i'}m_{i'}^2)
- \la^u_i m_i - \la^u_{i'}m_{i'}
    \end{equation}
where $(m_i,m_{i'}) \in (-1,1)\times (-1,1)$, $a_j$ is a shorthand for $a_{x,j}$, $j=i,i'$,
and
   \begin{eqnarray}
    \label{9s.7}
&&
\la^u_j =  \sum_{y\ne x:(y,j)\notin \Delta_{0}}
J_\ga(x,y) u(y,j),\quad j = i,i'.
    \end{eqnarray}
In Appendix \ref{sec:9s} we shall prove:

   \medskip

\begin{prop}
\label{thm4s.3.1} 
There are $\eps_0>0$ and $\ga_\cdot:(0,\infty)\to(0,\infty)$ such that for any $0<\eps\le \eps_0$
there are $r<1$ and coefficients $C_{x,i,i'}(j,j')$, $j,j' \in\{i,i'\}$, so
that the following holds for all $\ga\le \ga_\eps$.

\begin{itemize}
\item  For any $u\in \mathcal N_{x,i,i'}$
there is a unique minimizer $m^{(u)}=(m^{(u)}_i,m^{(u)}_{i'})$ of $g_\eps$.

\item $\dis{ \sum_{j'=i,i'} C_{x,i,i'}(j,j') \le r}$ for $j=i,i'$.

\item $\dis{|m^{(u)}_j-m_\eps| \le \sum_{j'=i,i'} C_{x,i,i'}(j,j')
\frac
{|\la^{u}_{j'} - \la_{j'}^{\rm eq}|}
{1-a_{x,j'}}}$,  $j=i,i'$, where $\la_j^{\rm eq}$ is the value of $\la_j$ when $u$ is identically equal to $m_\eps$.

    \item
    $\dis{ \frac
{| \la_j^u -\la_j^{\rm eq}|} {1-a_{x,j}}\le  \zeta + c \ga^\alpha}$  for $j=i,i'$

\item For any $u,v\in \mathcal N_{x,i,i'}$ and  $j=i,i'$,
$\dis{|m^{(u)}_j-m^{(v)}_j| \le \sum_{j'=i,i'} C_{x,i,i'}(j,j')\frac
{|\la^{u}_{x,j'} - \la_{x,j'}^{v}|}
{1-a_{x,j'}}}$.
\end{itemize}
\end{prop}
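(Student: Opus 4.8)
The plan is to treat $g_\eps$ as a perturbation of the two-layer mean field free energy $\hat f_\eps$ analyzed in Proposition \ref{thm4s.1}. Writing $g_\eps(m_i,m_{i'}) = \hat\phi_\eps(m_i,m_{i'}) - \tfrac12(a_i m_i^2 + a_{i'}m_{i'}^2) - \la^u_i m_i - \la^u_{i'} m_{i'}$ and recalling that $\hat f_\eps(m_i,m_{i'}) = -\tfrac12(m_i^2+m_{i'}^2) + \hat\phi_\eps(m_i,m_{i'})$, we have $g_\eps = \hat f_\eps + \tfrac12(1-a_i)m_i^2 + \tfrac12(1-a_{i'})m_{i'}^2 - \la^u_i m_i - \la^u_{i'}m_{i'}$. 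The critical point equations are then $\partial_{m_j}\hat\phi_\eps(m_i,m_{i'}) = a_j m_j + \la^u_j$, i.e.\ $\partial_{m_j}\hat\phi_\eps(m) - m_j = -(1-a_j)m_j + \la^u_j$. Since $\hat f_\eps$ has nondegenerate minimizers at $\pm(m_\eps,m_\eps)$ (with positive definite Hessian there, by Proposition \ref{thm4s.1} applied with $\zeta$ small — the quadratic lower bound \eqref{4.19.-1} near the minimum gives strict convexity on $U_\zeta$), and since $a_{x,j} = \sum_{y:(y,i)\in\Delta_0} J_\ga(x,y)$ is small (it is a partial sum of the Kac kernel, bounded by something like $c\ga^\alpha$ because $\Delta_0$ sits at distance of order $\ell_- = \ga^{-(1+\alpha)}$, hence many interaction lengths, from $\Delta_{\rm in}$... actually $a_{x,i}$ can be $O(1)$ for $(x,i)$ near the boundary of $\Delta_0$; in any case $a_{x,i} < 1$ strictly and this is what matters for the denominators $1-a_{x,j}$), the implicit function theorem yields a unique smooth solution branch $m^{(u)}$ near $(m_\eps,m_\eps)$ as long as $\la^u$ stays near its equilibrium value $\la^{\rm eq}_j$ (the value when $u\equiv m_\eps$). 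The constraint $u\in\mathcal N_{x,i,i'}$, which forces $|u^{\ell_-}(y,j)-m_\eps|\le\zeta$ off the two special intervals plus the smoothness bound $|u-u^{\ell_-}|\le c\ga^\alpha$ on $\Delta_0^c$, is exactly what guarantees $|\la^u_j - \la^{\rm eq}_j| \le (1-a_{x,j})(\zeta + c\ga^\alpha)$, giving the fourth bullet; one splits the sum defining $\la^u_j$ over the intervals where the $\zeta$-bound holds versus the special ones, and on the latter uses $|u(y,j)|\le 1$ together with $\sum_{y\in C^{\ell_-}_x} J_\ga(x,y) \le c\ga^\alpha$ (only $\ga^{-1}$ of the $\ell_-$ sites interact).

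For the \emph{uniqueness} over all of $K$ (not just locally), the key is that $g_\eps$ is actually convex in $(m_i,m_{i'})$ on a neighborhood of $(m_\eps,m_\eps)$ large enough to contain $K$ once we restrict via the constraint: the Hessian of $g_\eps$ equals the Hessian of $\hat\phi_\eps$ minus $\mathrm{diag}(a_i,a_{i'})$, and near the minimizer $\mathrm{Hess}\,\hat f_\eps = \mathrm{Hess}\,\hat\phi_\eps - I$ is positive definite, so $\mathrm{Hess}\,g_\eps = \mathrm{Hess}\,\hat f_\eps + \mathrm{diag}(1-a_i,1-a_{i'}) \succeq \mathrm{Hess}\,\hat f_\eps + 0 \succ 0$ (using $a_{x,j}<1$). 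Wait — one needs $1-a_{x,j}\ge 0$, true, and actually $\mathrm{Hess}\,\hat\phi_\eps - \mathrm{diag}(a_i,a_{i'}) = (\mathrm{Hess}\,\hat\phi_\eps - I) + \mathrm{diag}(1-a_i,1-a_{i'})$, and since the first summand is positive definite on $U_\zeta$ (nondegenerate minimum) and the second is positive semidefinite, $g_\eps$ is strictly convex there; combined with the a priori bound showing the minimizer over $K$ lands in this convexity region (because $K$ forces $m^{\ell_-}$ within $\zeta$ of $m_\eps$, and smoothness of the minimizer — established as in Proposition \ref{thm3s.4} — propagates this to a pointwise bound), uniqueness follows.

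With uniqueness and the IFT branch in hand, the quantitative bullets are obtained by differentiating the critical point relation $\nabla\hat\phi_\eps(m^{(u)}) = \mathrm{diag}(a_i,a_{i'})m^{(u)} + \la^u$ in $\la^u$ (or differencing it between $u$ and $v$): writing $A = \mathrm{Hess}\,\hat\phi_\eps(m_\eps,m_\eps) - \mathrm{diag}(a_i,a_{i'})$ (invertible, positive definite), one gets $m^{(u)} - m^{(v)} = A^{-1}(\la^u - \la^v) + O(|\la^u-\la^v|^2)$, and similarly $m^{(u)} - (m_\eps,m_\eps) = A^{-1}(\la^u - \la^{\rm eq}) + (\text{higher order})$. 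Setting $C_{x,i,i'}(j,j') := (1-a_{x,j'})\,|(A^{-1})_{jj'}|$ — possibly with a harmless inflation to absorb the higher-order terms and to make the inequalities clean — gives the third and fifth bullets. The contraction bound $\sum_{j'} C_{x,i,i'}(j,j')\le r<1$ (second bullet) is the crucial structural point: it says $A^{-1}$ has row sums (suitably weighted) bounded away from $1$, which by \eqref{4s.2} should follow from computing $A$ to leading order in $\eps$. Since $\hat\phi_\eps$ is the Legendre-type dual of the two-spin partition function, at $m_\eps\sim\sqrt{3\eps}$ one computes $\mathrm{Hess}\,\hat\phi_\eps$ explicitly: the diagonal entries are $\sim 1 + O(\eps)$ (single-layer entropy curvature $1/(1-m_\eps^2)\approx 1+3\eps$) minus the coupling contribution, off-diagonal $\sim -\eps$ or so; subtracting $I$ and the small $a_j$, inverting a $2\times2$ matrix of the form $\binom{\,3\eps + (1-a_i)\ \ -c\eps\,}{\,-c\eps\ \ 3\eps+(1-a_{i'})\,}$ — whose diagonal dominates — shows $A^{-1}$ has $\ell^1$ row sums bounded by $1 - \delta$ for some $\delta = \delta(\eps) > 0$, uniformly in the $a_j \in [0,1)$, provided $\eps$ is small enough. \textbf{The main obstacle} is precisely this last quantitative step: verifying, from the Legendre-transform structure of $\hat\phi_\eps$ and the asymptotics \eqref{4s.2}, that the relevant inverse-Hessian norm is $<1$ with a gap — this is the "high-temperature" contraction estimate that makes the Dobrushin-type uniqueness and the correlation-decay bullets (needed downstream for the $e^{-c\ga^{-\alpha}}$ decay of $[m(x,i)-m(y,i)]^2$) go through, and it is exactly the content deferred to Appendix \ref{sec:9s}.
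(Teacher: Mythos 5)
Your proposal correctly identifies the structure of the problem (critical point equation for $g_\eps$, perturbation around the equilibrium $(m_\eps,m_\eps)$, strict convexity giving uniqueness, and a contraction giving the quantitative bullets), but it leaves the central quantitative step unproved, and you say so yourself at the end. The second bullet, $\sum_{j'} C_{x,i,i'}(j,j')\le r<1$, is the entire point of the proposition -- it is what later yields the exponential decay $|m(x,i)-m_\eps|\le 2r^n$ in Theorem \ref{thm4s.3}, which in turn is what kills the coupling term $[m(x,i)-m(y,i)]^2$ in the lower bound. Sketching a $2\times2$ matrix of the form $\begin{pmatrix}3\eps+(1-a_i)&-c\eps\\-c\eps&3\eps+(1-a_{i'})\end{pmatrix}$ and asserting ``diagonal dominance'' is not a proof; the entries of $D^2\hat\phi_\eps$ have to be computed from the Legendre duality with $\hat\pi_\eps$, and the off-diagonal could in principle overwhelm the gap. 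The paper avoids this by \emph{not} inverting $D^2\hat\phi_\eps$ at all: it passes to the dual variable via Lemma \ref{lemma5s.2bis}, rewriting the critical-point equation as $m=\hat D_h\hat\pi_\eps(\hat\theta)$, so that the contraction coefficients become directly the entries $R_{j,j'}$ of $D^2\hat\pi_\eps$, whose weighted row sums are bounded by $2G_{++}\le 1-\eps/2$ (Lemma \ref{coro4.0}, proved from the explicit formulas \eqref{9s.1}--\eqref{9s.2}). Your approach (inverse Hessian of $\hat\phi_\eps$, equivalently Hessian of $\hat\pi_\eps$) is formally dual to the paper's and could be made to work, but you would still need to carry out essentially the same computation, and you haven't.

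A second, smaller gap: you derive bullets 3 and 5 by Taylor expansion, $m^{(u)}-m_\eps=A^{-1}(\la^u-\la^{\rm eq})+O(|\la^u-\la^{\rm eq}|^2)$, then propose to ``harmlessly inflate'' $C_{x,i,i'}$ to absorb the quadratic error. This is not harmless: the inflated coefficients still have to sum to $<1$, and there is no slack asserted. The proposition's bounds are exact inequalities, not leading-order asymptotics. The paper obtains exact inequalities by an interpolation/ODE argument: it introduces $\hat\theta(t)$ linear in $t$, shows $\dot m = (1-aK)^{-1}K(\la-(1-a)u^{\rm eq})$ with $K=D^2\hat\pi_\eps(\hat\theta(t))$ pointwise bounded entrywise by $R$, and integrates. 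The three-case analysis on $(a_i,a_{i'})$ --- both zero, both equal and positive, one zero one positive --- is what allows explicit closed-form $C_{x,i,i'}(j,j')$ with the geometric-series structure $(1-a)\sum_n a^n(R^{n+1})_{j,j'}$ that provably sums to something $<r$. Your proposal does not supply these exact coefficients.

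Finally, a minor point: you hesitate on whether $a_{x,j}$ is small or $O(1)$; the paper notes $a_{x,j}\le 1/2$ (symmetry of $J$ plus $(x,j)\in\Delta_{\rm in}$ lies at the boundary), which is all that is needed for $1-aR$ to be invertible, and your convexity argument for uniqueness (Hessian of $g_\eps$ equals Hessian of $\hat f_\eps$ plus $\mathrm{diag}(1-a_i,1-a_{i'})$, positive definite plus positive semidefinite) is correct and matches the spirit of the paper's Lemma in Appendix \ref{sec:9s}, though the paper's argument via $g_0$ and $-I''(0)=1>a_j$ is slightly more elementary. In short: uniqueness and bullet 4 are in good shape, the architecture is sound, but bullet 2 and the exactness of bullets 3 and 5 are genuine gaps that the paper closes by working in the $\hat\pi_\eps$-dual picture and by the interpolation argument.
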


As a consequence of the above proposition we have:

\medskip

\begin{thm}
\label{thm4s.3}
In the same context of Proposition \ref{thm4s.3.1} and for $\ga$ small enough
the following holds.
There is
a unique minimizer $m^*$ of $F'_{\Delta_{\rm in},\ga}( \cdot|\bar m)$ in $K$,
see \eqref{4s.5.23.00} and for any $(x,i)\in \Delta_{\rm in}$
  \begin{eqnarray}
    \label{4s.5.25.3}
&&  |m^*(x,i)-m_\eps| <\zeta, \\
&&  |m^*(x,i)-m_\eps| < 2r^n,
\label{4s.5.25.3.1}
    \end{eqnarray}
where $n$ is the minimal number of steps required to go from $(x,i)$ to the
complement of
${\rm sp}(\Ga)$ when horizontal steps have length $\le \ga^{-1}$ while
the vertical steps have length 1.

\end{thm}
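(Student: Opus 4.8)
The plan is to promote the local, two-site contraction estimates of Proposition \ref{thm4s.3.1} into a global statement on all of $\Delta_{\rm in}$ by a fixed point / iteration argument. First I would observe that any minimizer $m$ of $F'_{\Delta_{\rm in},\ga}(\cdot|\bar m)$ in the compact set $K$ of \eqref{4s.5.23.00} must, at each vertically interacting pair $(x,i),(x,i')=v_{x,i}$, also minimize the function $g_\eps$ of \eqref{9s.5} when the remaining coordinates $u=\{m(y,j):(y,j)\ne (x,i),(x,i')\}$ are frozen; this is just the statement that a global minimum is a coordinate-pair-wise minimum. The frozen data $u$ lies in $\mathcal N_{x,i,i'}$ provided $m\in K$ and provided the smoothness bound $|m-m^{(\ell_-)}|\le c\ga^\alpha$ holds, which in turn follows from Proposition \ref{thm3s.4} (restricting the infimum to $S_{\Delta_0}$, and noting the analogous smoothness in $\Delta_{\rm in}$ forced by the structure of $F'$). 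Hence by the first bullet of Proposition \ref{thm4s.3.1}, the pair $(m(x,i),m(x,i'))$ is uniquely determined as $m^{(u)}$, and \eqref{4s.5.25.3} is immediate from the third and fourth bullets, since $|m^*(x,i)-m_\eps|\le \sum_{j'}C_{x,i,i'}(j,j')\cdot(\zeta+c\ga^\alpha)\le r(\zeta+c\ga^\alpha)<\zeta$ for $\ga$ small (using $\sum_{j'}C_{x,i,i'}(j,j')\le r<1$ and $r\zeta<\zeta$).

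For uniqueness of the global minimizer, I would set up the map $T$ on $K$ that sends a configuration $m$ to the configuration $T(m)$ whose value at each pair $(x,i),(x,i')$ is the minimizer $m^{(u)}$ of $g_\eps$ with $u$ the restriction of $m$ to the other sites. Any minimizer of $F'$ in $K$ is a fixed point of $T$. The fifth bullet of Proposition \ref{thm4s.3.1} gives a Lipschitz bound: $|m^{(u)}_j - m^{(v)}_j|\le \sum_{j'}C_{x,i,i'}(j,j')\,|\la^u_{j'}-\la^v_{j'}|/(1-a_{x,j'})$, and since $\la^u_j$ is an average of the $u(y,j)$ against the (sub-probability) kernel $J_\ga(x,\cdot)$ on $\{(y,j)\notin\Delta_0\}$, one has $|\la^u_{j'}-\la^v_{j'}|/(1-a_{x,j'})\le \sup_{(y,j')}|u(y,j')-v(y,j')|$. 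Combining with $\sum_{j'}C_{x,i,i'}(j,j')\le r<1$ shows $\|T(m)-T(m')\|_\infty \le r\|m-m'\|_\infty$, so $T$ is a contraction on the complete metric space $K$ with the sup norm, hence has a unique fixed point $m^*$. This gives existence and uniqueness of the minimizer.

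Finally, for the exponential decay \eqref{4s.5.25.3.1}, I would iterate the contraction starting from the constant configuration $m_\eps$ (call it $m^{\rm eq}$). Since $\bar m$ on the complement of ${\rm sp}(\Ga)$ satisfies $\eta=1$ there (plus contour), we have $|\bar m - m_\eps|\le\zeta$ there, so the "boundary defect" only enters through sites near $\partial {\rm sp}(\Ga)$. The key estimate is that $(T m^{\rm eq})(x,i)$ already equals $m_\eps$ unless the pair $(x,i),(x,i')$ sees the outside within one horizontal interaction length or one vertical step, in which case $|(Tm^{\rm eq})(x,i)-m_\eps|\le r\cdot(\text{something}\le\zeta<1)$; more generally, after $n$ applications of $T$ the value at $(x,i)$ differs from $m_\eps$ only if $(x,i)$ is within $n$ "steps" (horizontal of length $\le\ga^{-1}$, vertical of length $1$) of ${\rm sp}(\Ga)^c$, and then $|(T^n m^{\rm eq})(x,i)-m_\eps|\le r^n\cdot \text{diam}(K\ \text{in that coord})\le 2r^n$. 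Passing to the limit $T^n m^{\rm eq}\to m^*$ gives \eqref{4s.5.25.3.1}. The main obstacle I anticipate is bookkeeping the "step" metric correctly: one must check that $T$ propagates information from $\partial{\rm sp}(\Ga)$ at exactly unit speed in this anisotropic metric — a horizontal application of $T$ reaches distance $\ga^{-1}$ because that is the range of $J_\ga(x,\cdot)$, and a vertical application reaches distance $1$ because that is the range of the vertical interaction — and that the exponent in Proposition \ref{thm4s.3.1}'s contraction constant $r$ is uniform over all pairs $(x,i),(x,i')$ in $\Delta_{\rm in}$, independent of how close they are to the boundary.
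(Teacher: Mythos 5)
Your overall strategy coincides with the paper's: establish that the minimizer satisfies the pairwise ``DLR'' property with respect to $g_\eps$, deduce \eqref{4s.5.25.3} from the bullets of Proposition \ref{thm4s.3.1}, and get uniqueness and the decay \eqref{4s.5.25.3.1} by iterating the contraction (your Banach fixed point formulation of uniqueness is just a repackaging of the paper's $n$-fold iteration of the Lipschitz inequality). However, there is one genuine gap at the very first step. You assert that a minimizer of $F'_{\Delta_{\rm in},\ga}(\cdot|\bar m)$ over $K$ is automatically a coordinate-pair-wise minimizer of $g_\eps$, calling this ``just the statement that a global minimum is a coordinate-pair-wise minimum.'' That statement is true for unconstrained minimization, but here the minimization is over the \emph{constrained} set $K$ of \eqref{4s.5.23.00}, which imposes $|m^{(\ell_-)}(x,i)-m_\eps|\le\zeta$, while the minimizer $m^{(u)}$ of $g_\eps$ in Proposition \ref{thm4s.3.1} is the \emph{unconstrained} minimizer over $(-1,1)^2$. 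A priori the constrained minimizer could sit on the boundary of $K$ at some block, in which case replacing a pair by the unconstrained $g_\eps$-minimizer could leave $K$ and the pairwise property would fail. The paper closes this exactly where you gloss over it: one first proves the strict bound $|m^{(u)}_j-m_\eps|<\zeta$ (your computation $r\zeta+c\ga^\alpha<\zeta$, which is \eqref{4s.5.333}), then sweeps through an entire block $C^{\ell_-,i}_x$ site by site, replacing each pair by its $g_\eps$-minimizer; the free energy is non-increasing along the sweep, the final configuration satisfies the \emph{pointwise} bound $|m-m_\eps|<\zeta$ on the whole block (hence the block-average constraint, hence membership in $K$), and since the original $m$ was a minimizer over $K$ the free energy cannot have decreased, so by uniqueness of the $g_\eps$-minimizer nothing changed. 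You already have every ingredient needed for this argument, but the argument itself is missing, and it is the heart of the paper's proof.

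A second, more minor point: your induction for \eqref{4s.5.25.3.1} is stated incorrectly. The claim ``after $n$ applications of $T$, $|(T^n m^{\rm eq})(x,i)-m_\eps|\le 2r^n$ for $(x,i)$ within $n$ steps of the boundary'' is false (a site at step-distance $1$ has deviation of order $2r$ after $n$ iterations, not $2r^n$). The correct statement, provable by induction on the iteration index $k$, is that $|(T^k m^{\rm eq})(x,i)-m_\eps|\le 2r^{d(x,i)}$ \emph{uniformly in $k$}, where $d(x,i)$ is the step-distance of $(x,i)$ to ${\rm sp}(\Ga)^c$; passing to the limit $k\to\infty$ then gives \eqref{4s.5.25.3.1}. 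The paper avoids this bookkeeping altogether by iterating the fixed-point inequality \eqref{4s.5.334} satisfied by $m^*$ itself exactly $n$ times and bounding the last factor by $2$, which is the cleaner route.
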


\medskip
\begin{proof}
We shall
preliminary prove that for $\ga$ small enough the minimizer
in  Proposition \ref{thm4s.3.1} satisfies $|m^{(u)}_j-m_\eps|<\zeta $.
Indeed:
   \begin{equation}
    \label{4s.5.333}
|m^{(u)}_j-m_\eps| \le \sum_{j'=i,i'} C_{x,i,i'}(j,j')\frac
{|\la^{u}_{x,j'} - \la_{j'}^{\rm eq}|}
{1-a_{j'}}
\le r \zeta + c \ga^\alpha < \zeta,
    \end{equation}
having used the bounds on $C_{x,i,i'}(j,j')$ and $|\la^{u}_{j'} - \la_{j'}^{\rm eq}|$ stated
in  Proposition \ref{thm4s.3.1}. The last inequality  $ r \zeta + c \ga^\alpha < \zeta$
holds for $\ga$ small enough, because
$r<1$ and by the choice of $\zeta$ and $\alpha$.

Since $F'_{\Delta_{\rm in},\ga}( m|\bar m)$ is a continuous function
of the coordinates $m(x,i)$, $(x,i)\in \Delta_{\rm in}$, it has a minimum in the compact set $K$.
Let $m$ be a minimizer, and $C^{\ell_-,i}_{x}$  a segment in $\Delta_{\rm in}$
whose points are denoted
$(x_1,i),..,(x_N,i)$. Let $m_{x_1;i}$ be the function obtained from
$m$ after replacing the elements $m(x_1,i)$ and $m(v_{x_1,i})$ by the minimizer
of $g_\eps$ relative to the points $(x_1,i)$ and $v_{x_1,i}$ and with $u=m$ on the complement
of $\{(x_1,i),v_{x_1,i}\}$.  We then define iteratively the sequence
$m_{x_1,\dots,x_k;i}$, $k\le N$, by applying the above procedure to
$m_{x_1,\dots,x_{k-1};i}$.  We claim that $m_{x_1,\dots,x_{N};i}=m$.  In fact
$F'_{\Delta_{\rm in},\ga}(m_{x_1,\dots,x_k;i}|\bar m)$ is non increasing in $k$ (because
we are relaxing the condition $\eta=1$ in $C^{\ell_-,i}_{x,i}$ and because
we are putting at each step the minimizer of the corresponding $g_\eps$) and therefore
\[
F'_{\Delta_{\rm in},\ga}(m_{x_1,\dots,x_N;i}|\bar m) \le F'_{\Delta_{\rm in},\ga}( m|\bar m).
\]
By \eqref{4s.5.333}
$|m_{x_1,\dots,x_{N};i}(y,j) - m_\eps| < \zeta$ for all $(y,j)$ in
$C^{\ell_-,i}_{x,i}\cup C^{\ell_-,i}_{v_{x,i}}$. Therefore
$m_{x_1,\dots,x_{N};i} \in K$ and since $m$ is a minimizer
\[
F'_{\Delta_{\rm in},\ga}( m|\bar m) \le F'_{\Delta_{\rm in},\ga}(m_{x_1,\dots,x_N;i}|\bar m).
\]
This  means that at each step
\[
F'_{\Delta_{\rm in},\ga}(m_{x_1,\dots,x_{k-1};i}|\bar m) =  F'_{\Delta_{\rm in},\ga}(m_{x_1,\dots,x_k;i}|\bar m)
\]
and by the uniqueness of the minimizer of $g_\eps$ we conclude the proof of the claim.
Observe that we have also proved a sort of DLR property, namely
that if $m$ is a
minimizer then its values at $(x,i)$ and $(x,i')$ minimize
the corresponding $g_\eps$.

By the arbitrariness of the choice of $(x,i) \in \Delta_{\rm in}$ in the above argument we deduce that for all $(x,i) \in \Delta_{\rm in}$
$|m(x,i) - m_\eps| < \zeta$, and since  $(m(x,i),m(v_{x,i}))$ is the minimizer of the corresponding $g_\eps$ then by the second property
in Proposition \ref{thm4s.3.1} and by  \eqref{9s.7}
   \begin{equation}
    \label{4s.5.334}
|m(x,i)-m_\eps| \le \sum_{j'=i,i'} C_{x,i,i'}(i,j')
\sum_{y\ne x:(y,j)\notin \Delta_{0}}
\frac
{J_\ga(x,y)}
{1-a_{x,j'}}  |m(y,j') - m_\eps|,
    \end{equation}
where $m=\bar m$ outside ${\rm sp}(\Ga)$.  The inequality \eqref{4s.5.334} can be iterated $n$ times with
$n$ as in the text of the theorem and we get
   \begin{eqnarray*}
|m(x,i)-m_\eps| &\le& \sum_{j_1=i,i'} C_{x,i,i'}(i,j_1)
\sum_{y_1\ne x:(y,j)\notin \Delta_{0}}
\frac
{J_\ga(x,y_1)}
{1-a_{x,j_1}}  \\ &\times&  \sum_{j_2=j_1,j_1'} C_{y_1,j_1,j_1'}(j_1,j_2)
\sum_{y_2\ne y_1:(y_2,j_2)\notin \Delta_{0}}
\frac
{J_\ga(x,y_2)}
{1-a_{y_1,j_2}} \cdots |m(y_n,j_n)-m_\eps)|.
    \end{eqnarray*}
We bound the last factor by $2$, the sum over the $y_k$ is normalized to 1
hence $|m(x,i)-m_\eps| \le 2r^n$.

It remains to prove the uniqueness of the minimizer  of $F'_{\Delta_{\rm in},\ga}( \cdot|\bar m)$ in $K$.  Suppose there are two minimizers $m$ and $m'$, then by the last statement
of Proposition \ref{thm4s.3.1}
   \begin{equation}
    \label{4s.5.335}
|m(x,i)-m'(x,i)| \le \sum_{j'=i,i'} C_{x,i,i'}(i,j')
\sum_{y\ne x:(y,j)\notin \Delta_{0}}
\frac
{J_\ga(x,y)}
{1-a_{x,j'}}  |m(y,j') - m'(y,j')|.
    \end{equation}
The inequality can be iterated $n$ times. But now $n$ is arbitrary
because $m(y,j) = m'(y,j)$ outside ${\rm sp}
(\Ga)$ and therefore $m(x,i)=m'(x,i)$.

%
%
%
%
%

\end{proof}

\vskip1cm

\setcounter{equation}{0}

\section{The lower bound}
\label{sec:6t}
In this section we will prove a lower bound for the denominator in \eqref{2.6}.

We call  \emph{trial function} a function $m\in
\mathcal M_{{\rm sp}(\Ga)}$, see \eqref {2s.13}, namely with values in
$M_{\ga^{-1/2}}$ and constant on the intervals
$C^{\ga^{-1/2},i}$ contained in ${\rm sp}(\Ga)$.  Denote by $\si$ the collection
of spins in $c(\Ga)^c$ and in the sets $I_k^{\pm}$. For any such $\si$ we choose
a trial function $m_{\si}$ and  using \eqref{2s.16} we get that the denominator in \eqref{2.6} is bounded from below by
    \begin{equation}
    \label{6t.1}
 e^{-c |{\rm sp}(\Ga)| \ga^{1/2-a}}\sum^{+}_{\si_{I_k^{+}},\,\si_{I_k^{-}}}
 e^{-F_{{\rm sp}(\Ga),\ga}( m_\si | \bar m_\si)- \sum_k H(\si_{I_k^{+}})
 - \sum_k H(\si_{I_k^{-}})},
    \end{equation}
having used the notation of \eqref{4t.9}.  The whole point is now to reduce
\eqref{6t.1} to \eqref{4t.11} and this is done with a good choice of the
trial function.
We fix $\si$ and start with the function $m'$ which in $\Delta_0$ is identically equal to $m_\eps$,
in $\Delta_{\rm in}$  it is the minimizer of $\Phi^+_{\Delta_{\rm in}}(\bar m_{\si_{\rm ext}})$
and in $I_k^{\pm}$ it coincides with the minimizer of $\Phi^+_{\Delta^{\pm}_k}(\bar m_{\si})$.  Since the values
of $m'$ are not necessarily in $M_{\ga^{-1/2}}$ $m'$ may not be a trial function.
We then define $m''$  which at each $(x,i)$ is equal to
a value in $M_{\ga^{-1/2}}$ which minimizes the distance of
$m'(x,i)$ from $M_{\ga^{-1/2}}$.  $m''$ may not be constant on the intervals $C^{\ga^{-1/2},i}$ so that we define $m_\si$ as
   \begin{equation}
    \label{6t.2}
m_\si(x,i) = \ga^{1/2}\sum_{y \in C^{\ga^{-1/2}}_{x}}m''(y,i).
    \end{equation}
$m_\si$ is a trial function and we can use it in \eqref{6t.1}.  We claim that
   \begin{equation}
    \label{6t.3}
 F_{{\rm sp}(\Ga),\ga}( m_\si | \bar m_\si) \le F_{{\rm sp}(\Ga),\ga}( m'' | \bar m_\si)
 + c \ga^{1/2}|{\rm sp}(\Ga)|.
    \end{equation}
{\em Proof.}
Recalling the definition \eqref{2s.14} of the free energy functional, we observe
that by convexity
        \begin{eqnarray*}
\sum_{(x,i)\in {\rm sp}(\Ga)}
\hat \phi_{\eps} (m''(x,i),m''(v_{x,i}))\ge \sum_{(x,i)\in {\rm sp}(\Ga)}
\hat \phi_{\eps} (m_\si(x,i),m_\si(v_{x,i})).
    \end{eqnarray*}

  For the terms in \eqref{2s.14} that contain $J_\gamma$, replacing $m_\si$ by $m''$ gives an error
  that is bounded from above by $c\ga^{1/2}|{\rm sp}(\Ga)|$, yielding \eqref{6t.3}.
%
%
Similarly
   \begin{equation}
    \label{6t.4}
 F_{{\rm sp}(\Ga),\ga}( m'' | \bar m_\si) \le F_{{\rm sp}(\Ga),\ga}( m' | \bar m_\si)
 + c \ga^{1/2}|{\rm sp}(\Ga)|.
    \end{equation}
So far we have proved that
the denominator in \eqref{2.6} is bounded from below by
    \begin{equation}
    \label{6t.5}
 e^{-c |{\rm sp}(\Ga)| \ga^{1/2-a}}\sum^{+}_{\si_{I_k^{+}},\,\si_{I_k^{-}}}
 e^{-F_{{\rm sp}(\Ga),\ga}( m' | \bar m_\si)- \sum_k H(\si_{I_k^{+}})
 - \sum_k H(\si_{I_k^{-}})}
    \end{equation}
(with $c$ a suitable constant which takes care of all the above errors).
We next use \eqref{4s.10} to get
    \begin{eqnarray}
    \label{6t.6}
F_{{\rm sp}(\Ga),\ga}( m' | \bar m_\si) &\le&  \hat f_{\eps,{\rm eq}} \frac{|\Delta_0|}2 +
  \Phi^+_{\Delta_{\rm in}}(\bar m_{\si})\nn\\ &+&
 \sum_{k}\Phi^+_{\Delta^+_k}(\bar m_{\si})
  +  \sum_{k}\Phi^+_{\Delta^-_k}(\bar m_{\si})
  + c |{\rm sp}(\Ga)| e^{-\ga^{-\alpha}},
     \end{eqnarray}
where we used that (i) $m'=m_\eps$ in $\Delta_0$; (ii) $m'$ is the minimizer
of   $\Phi^+_{\Delta^{\pm}_k}(\bar m_{\si})$ and of $\Phi^+_{\Delta_{\rm in}}(\bar m_{\si})$ in the
respective sets;
(iii) the last term in \eqref{4s.10} is bounded using \eqref{4s.5.25.3.1}.

 In conclusion
        \begin{eqnarray}
    \label{6t.7}
  && \hskip-2cm Z_{c(\Ga);\bar\si}(\eta  = 1 \;{\rm on}\;
 \, {\rm sp}(\Ga); \Theta = \pm 1 \;{\rm on \;each} \,\; \partial_{k}^{\pm}(\Ga))\nn\\
&\ge & e^{-  \hat f_{\eps,{\rm eq}} \frac{|\Delta_0|}2 -c (|{\rm sp}(\Ga)|\ga^{1/2}} \nn\\
&\times&  e^{- \Phi_{\Delta_{\rm in}}(\bar m_{\si_{\rm ext}})}
\{\prod Z^+ (I^+_k)\} \{\prod Z^+ (I^-_k)\}.
     \end{eqnarray}

\noindent \emph{Proof of Theorem \ref{peierls}.}
A comparison with the upper bound  \eqref{4t.11} and use of \eqref {4s.4} then completes
the proof of the Peierls bounds.\qed

%
%

\vskip1cm

\setcounter{equation}{0}

\section{Proof of Theorem \ref{thm1.1}}
\label{sec:7t}

The proof of Theorem \ref{thm1.1} is based
on the validity of the Peierls bounds \eqref{2.66} and it follows closely the
well known proof for the nearest neighbor Ising model at low temperatures.

Let $\{\La_n\}$ be an increasing sequence of bounded $Q$-measurable regions
which invades the whole space and $\mu^+_{\La_n,\bar \si}$ plus diluted Gibbs measures
with boundary conditions $\bar \si$ ($\bar\si$ may depend on $n$).  We want to prove that for $\ga$ small enough
and all boundary conditions $\bar \si$ as in the paragraph of~(\ref{2.6})
   \begin{equation}
    \label{7t.1}
 \lim_{n\to \infty} \mu^+_{\La_n,\bar \si}\Big[ \Theta(0) < 1\Big] < \frac 12.
    \end{equation}
By the definition of plus diluted Gibbs measures the event in \eqref{7t.1} can only occur
if there is a contour $\Ga$ such that the origin belongs to $c(\Ga)$.  Call $N(\Ga)$ the number of $Q$-rectangles contained in ${\rm sp}(\Ga)$.  Then there is a
horizontal translate of ${\rm sp}(\Ga)$ by $k\ell_+$, $k\le N(\Ga)$, so that the translate
of ${\rm sp}(\Ga)$  contains the origin.

This means that
   \begin{equation}
    \label{t1.1}
\mu^+_{\La_n,\bar \si}\Big[ \Theta(0) < 1\Big]
\le \mu^+_{\La_n,\bar \si}\Big[ \bigcup_{\Ga:{\rm sp}(\Ga) \ni 0}\bigcup_{k\le N(\Ga)}\{\Ga_k\mbox{ is a contour}\}\Big],
    \end{equation}
where $\Ga_k$ is the contour obtained from $\Ga$ by translating it by $k\ell^+$. By subadditivity, the right hand side of~(\ref{t1.1})
is bounded above by
\begin{equation}
    \label{t1.2}
    \sum_{\Ga:{\rm sp}(\Ga) \ni 0}\sum_{k\le N(\Ga)}
    \mu^+_{\La_n,\bar \si}\Big[ \Ga_k\mbox{ is a contour}\Big].
    \end{equation}
Now the probability inside the double sum in~(\ref{t1.2}) equals
\begin{equation}
    \label{t1.3}
 \frac
{Z_{c(\Ga_k);\bar\si}(\eta  = \eta_{\Ga_k} \;{\rm on}\;
 \, {\rm sp}(\Ga_k); \Theta = \pm 1 \;{\rm on \;each} \,\; \partial_{{\rm out},i}^{\pm}(\Ga_k))}
{Z^+_{\La_n,\bar \si}}\le W_{\Ga_k}(\bar\si),
 \end{equation}
the inequality justified by the fact that the denominator in~(\ref{2.6}) is bounded above $Z^+_{\La_n,\bar \si}$
(since the sum defining the latter quantity contains the terms in the sum defining the former one).

Therefore by  \eqref{2.66}, and using the fact that $|{\rm sp}(\Ga_k)|=|{\rm sp}(\Ga)|$, we find that
    \begin{equation}
    \label{7t.2}
\mu^+_{\La_n,\bar \si}\Big[ \Theta(0) < 1\Big] \le \sum_{\Ga:{\rm sp}(\Ga) \ni 0}N(\Ga)  e^{-c |{\rm sp}(\Ga)| \ga^{2a+4\alpha}}.
    \end{equation}
On the other hand $|{\rm sp}(\Ga)| = N(\Ga) \ga^{-(1+\alpha)}
\ga^{- \alpha}$ so that the sum on the right hand side of \eqref{7t.2} is just the sum over all 
connected regions $D$ made of unit cubes of
    \begin{equation}
    \label{7t.3}
\mu^+_{\La_n,\bar \si}\Big[ \Theta(0) < 1\Big] \le \sum_{D \ni 0}
|D|  e^{-c |D| \ga^{-1 +2a+2\alpha}}.
    \end{equation}
Since $a$ and $\alpha$ are much smaller than 1, then
the sum vanishes in the limit when $\ga\to 0$, see for instance Lemma 3.1.2.4
in \cite{presutti}, so that \eqref{7t.1} is proved.

By \eqref{7t.1} and the spin flip symmetry it follows that there are at least two
DLR measures, hence by ferromagnetic inequalities the plus and minus
DLR measures $\mu^{\pm}_\ga$ of  Theorem \ref{thm1.1} are distinct and
Theorem \ref{thm1.1} is proved.  There are many more consequences of the Peierls bounds, see for instance
Chapter 12 in  \cite{presutti}, but we shall not discuss such extensions here.

\setcounter{equation}{0}

\begin{appendix}

\section{Proof of Proposition \ref{thm3s.2}}
\label{sec:5s}

We prove Proposition \ref{thm3s.2} via equivalence of ensembles.
The grand-canonical partition function $\pi_\eps$ of the two layers
is trivially equal to the logarithm of
    \begin{eqnarray*}
&&\sum_{(\si_1,\si_2) \in \{-1,+1\}^2}
e^{ \{h_1 \si_1 +h_2 \si_2\}+ \eps \si_1\si_2}
= 2 \{e^\eps \cosh (h_+ )+ e^{-\eps} \cosh (h_-)\},
    \end{eqnarray*}
 where $h_+ = h_1+h_2$ and $h_- = h_2-h_1$.
Thus the pressure is given by
   \begin{equation}
    \label{2s.1}
\pi_{\eps}(h_+,h_-)
=\log (2Z),\quad Z=\{e^\eps \cosh (h_+ )+ e^{-\eps} \cosh (h_-)\}.
    \end{equation}
We can easily check that the function $(h_+,h_-)\mapsto \pi_{\eps}(h_+,h_-)$ is strictly convex, namely its Hessian, denoted here
by $D^2 \pi_\eps$, is a positive definite operator.  Indeed, by computation we have:
   \begin{equation}
    \label{2s.3}
\frac{\partial}{\partial h_+}\pi_{\eps}(h_+,h_-)= e^\eps \frac{\sinh (h_+ )}{Z},
\quad
\frac{\partial }{\partial h_-}\pi_{h_+,h_-}= e^{-\eps} \frac{\sinh (h_- )}{Z},
    \end{equation}
        \begin{eqnarray*}
\frac{\partial^2 }{\partial h_+^2}\pi_{\eps}(h_+,h_-) &=&  \frac{e^\eps}{Z}\Big(  \cosh (h_+ )-  e^{\eps}\frac{\sinh^2 (h_+ )}{Z} \Big) >0,
\\
\frac{\partial^2 }{\partial h_-^2}\pi_{\eps}(h_+,h_-) &=&  \frac{e^{-\eps}}{Z}\Big(  \cosh (h_- )-  e^{-\eps}\frac{\sinh^2 (h_- )}{Z} \Big) >0,
\\
\frac{\partial^2 }{\partial h_+ \partial h_-}\pi_{\eps}(h_+,h_-) &=& - \frac{\sinh (h_+ )\sinh (h_- )}{Z^2}.
    \end{eqnarray*}
  It then follows that the diagonal elements of $D^2 \pi_\eps(h_+,h_-)$ and its determinant, given by
   \begin{eqnarray}
    \label{2s.2}
&&|D^2\pi_{\eps}(h_+,h_-)|  = Z^{-4} \Big( 1+2 \cosh (2\eps) \cosh(h_+)\cosh(h_-)
+  \cosh^2(h_+)\cosh^2(h_-)\nonumber\\&&\hskip2cm - \sinh^2(h_+)\sinh^2(h_-)\Big),
    \end{eqnarray}
   are all positive, and therefore the $2\times 2$ Hessian matrix is positive definite.
 We now consider the Legendre transform of $\pi_{\eps}$:
 \begin{equation}
    \label{2s.7.1}
 \phi_\eps(m) := \sup_{h} \Big( \frac 12
 \langle h, m\rangle -\pi_{\eps}(h) \Big)
    \end{equation}
    where  $m=(m_+,m_-)$  and
       \begin{equation}
    \label{2s.8}
  \langle h, m\rangle  := h_+m_++h_-m_-, 
    \end{equation}
and we have:


\medskip

\begin{lem}
For any $m=(m_+,m_-)$ such that  $|m_i|<1$, $i=1,2$, where
  \begin{equation}
    \label{2s.4}
m_1 = (m_+-m_-)/2,\quad m_2 = (m_++m_-)/2
      \end{equation}
the sup in \eqref{2s.7.1} is a maximum, achieved at a unique
$h=(h_+, h_-)$, which is the unique solution of
      \begin{equation}
    \label{2s.5}
 m_+=2\frac{\partial}{\partial h_+}\pi_{\eps}(h_+,h_-),
\quad m_-= 2
 \frac{\partial}{\partial h_-}\pi_{\eps}(h_+,h_-).
    \end{equation}
In other words, for this $h$
 \begin{equation}
    \label{2s.7}
 \phi_\eps(m)= \frac 12 \langle h, m\rangle -\pi_\eps(h) = \sup_{h'} \Big( \frac 12
 \langle h', m\rangle -\pi_{\eps}(h') \Big).
    \end{equation}

\end{lem}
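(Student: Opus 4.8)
The plan is to exploit the strict convexity of $\pi_\eps$ just established (positive definiteness of $D^2\pi_\eps$) together with a coercivity estimate. Strict convexity of $\pi_\eps$ makes the map $h\mapsto \frac12\langle h,m\rangle-\pi_\eps(h)$ strictly concave on $\R^2$, so it has at most one critical point and any critical point is automatically its unique global maximizer. Thus the lemma reduces to showing that this map attains a maximum whenever $|m_1|<1$ and $|m_2|<1$, and that at such a maximizer the first order conditions are exactly \eqref{2s.5}.

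For existence I would pass to the ``physical'' fields $h_1=(h_+-h_-)/2$, $h_2=(h_++h_-)/2$, for which one checks directly that $\frac12\langle h,m\rangle=h_1m_1+h_2m_2$ and, by the computation preceding \eqref{2s.1},
\[
\pi_\eps(h)=\log\sum_{(\si_1,\si_2)\in\{-1,1\}^2}e^{h_1\si_1+h_2\si_2+\eps\si_1\si_2}.
\]
Retaining a single term of the sum gives $\pi_\eps(h)\ge\si_1h_1+\si_2h_2-\eps$ for every $(\si_1,\si_2)\in\{-1,1\}^2$, hence $\pi_\eps(h)\ge|h_1|+|h_2|-\eps$. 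Consequently
\[
\frac12\langle h,m\rangle-\pi_\eps(h)\le-|h_1|\,(1-|m_1|)-|h_2|\,(1-|m_2|)+\eps,
\]
which tends to $-\infty$ as $|(h_+,h_-)|\to\infty$ precisely because $1-|m_1|>0$ and $1-|m_2|>0$. Being continuous and coercive on $\R^2$, the map attains its supremum at some $h^\star$, and by strict concavity $h^\star$ is the unique maximizer; this is the content of \eqref{2s.7}.

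Finally, at $h^\star$ the gradient vanishes, which is precisely the system \eqref{2s.5}: $m_+=2\,\partial_{h_+}\pi_\eps(h^\star)$ and $m_-=2\,\partial_{h_-}\pi_\eps(h^\star)$; conversely any solution of \eqref{2s.5} is a critical point of the same strictly concave function, hence must coincide with $h^\star$, which gives uniqueness of the solution. The only mildly delicate point is the coercivity bound --- converting the hypotheses $|m_i|<1$ into genuine decay of the objective at infinity --- but once the linear lower bound on $\pi_\eps$ is observed this is immediate, and everything else is routine convex analysis resting on the already verified positivity of $D^2\pi_\eps$.
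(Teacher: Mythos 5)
Your proof is correct and follows the same strategy as the paper: continuity plus coercivity of $h\mapsto\tfrac12\langle h,m\rangle-\pi_\eps(h)$ gives existence of a maximizer, the first-order condition gives \eqref{2s.5}, and strict convexity of $\pi_\eps$ gives uniqueness. The only difference is that the paper simply asserts that the objective tends to $-\infty$ as $|h|\to\infty$, whereas you supply the explicit (and correct) linear lower bound $\hat\pi_\eps(h_1,h_2)\ge|h_1|+|h_2|-\eps$, which makes the role of the hypothesis $|m_i|<1$ transparent.
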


\medskip

\begin{proof}
If $|m_i|<1$, $i=1,2$, the function
\[
\Ga(h):= \frac 12\langle h, m\rangle -\pi_{\eps}(h)
 \]
 goes to $-\infty$ when $|h|\to \infty$.
 Together with the continuity of $\Gamma (h)$ this implies that the supremum is
a maximum, achieved at the critical point,
hence \eqref{2s.5}.
Uniqueness follows from the strict convexity of $\pi_\eps$.

 \end{proof}


The following lemma is an immediate consequence of the strict convexity of $\pi_\eps$ and the properties of the Legendre transform.

\begin{lem}
\label{lemma5s.2}

$\phi_{\eps}$ is strictly convex.  Writing $D\phi_\eps$ for its gradient,
$m=(m_+,m_-)$ solves the equation $\dis{D\phi_\eps(m) = \frac{\theta}2}$, $\theta=(\theta_+,\theta_-)$, if and only if
    \begin{equation}
    \label{2s.8.1}
m = 2 D \pi_\eps (\theta).
    \end{equation}
More explicitly:
   \begin{eqnarray}
    \label{2s.8.2}
m_+ &=& 2e^\eps \frac{\sinh (\theta_+)}{e^\eps \cosh (\theta_+ )+e^{-\eps} \cosh (\theta_-  )} \nonumber
\\
\\ m_-  &=& 2e^{-\eps} \frac{\sinh (\theta_-)}{e^\eps \cosh (\theta_+)+e^{-\eps} \cosh (\theta_-)}. \nonumber
    \end{eqnarray}
\end{lem}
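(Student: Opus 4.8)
The plan is to derive everything from the single observation that $\phi_\eps$ is, up to a rescaling of the argument, the ordinary Legendre transform of $\pi_\eps$. Renaming the supremum variable in \eqref{2s.7.1} and absorbing the factor $\tfrac12$ into the variable conjugate to $h$ gives
\[
\phi_\eps(m) \;=\; \sup_h\big(\langle h,\, m/2\rangle - \pi_\eps(h)\big) \;=\; \pi_\eps^{*}(m/2),
\]
where $\pi_\eps^{*}$ is the (unscaled) convex conjugate of $\pi_\eps$. Since $\pi_\eps\in C^\infty(\R^2)$ has everywhere positive definite Hessian (established just before \eqref{2s.2}) and the function $\Gamma(h)=\tfrac12\langle h,m\rangle-\pi_\eps(h)$ tends to $-\infty$ as $|h|\to\infty$ for every $m$ in the open set where $|m_i|<1$ (recorded in the proof of the preceding lemma), $\pi_\eps$ is essentially smooth and strictly convex; hence by the standard Legendre duality its conjugate $\pi_\eps^{*}$ is also essentially smooth and strictly convex on the interior of its effective domain, with $\nabla\pi_\eps^{*}=(\nabla\pi_\eps)^{-1}$. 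Composing with the linear rescaling $m\mapsto m/2$ then shows $\phi_\eps$ is strictly convex there and
\[
\nabla\phi_\eps(m) \;=\; \tfrac12\,(\nabla\pi_\eps)^{-1}(m/2).
\]

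From this formula the characterization is immediate. The equation $D\phi_\eps(m)=\theta/2$ reads $\tfrac12(\nabla\pi_\eps)^{-1}(m/2)=\theta/2$, i.e.\ $(\nabla\pi_\eps)^{-1}(m/2)=\theta$, i.e.\ $m/2=\nabla\pi_\eps(\theta)$, which is exactly $m=2\,D\pi_\eps(\theta)$, equation \eqref{2s.8.1}. The preceding lemma, applied to this $m$, says that $h=\theta$ is the unique maximizer realizing the supremum in \eqref{2s.7.1}, so the correspondence $\theta\leftrightarrow m$ is a genuine bijection between $\R^2$ and the open set $\{|m_i|<1\}$. Finally, to obtain the explicit formulas \eqref{2s.8.2} I would simply insert $\theta=(\theta_+,\theta_-)$ into the first-derivative expressions \eqref{2s.3}, getting $D\pi_\eps(\theta)=\big(e^\eps\sinh(\theta_+)/Z,\ e^{-\eps}\sinh(\theta_-)/Z\big)$ with $Z=e^\eps\cosh(\theta_+)+e^{-\eps}\cosh(\theta_-)$, and multiply by $2$.

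There is no real obstacle here: the statement merely repackages facts about Legendre duality that are already in place. The one point deserving a moment's care is the verification that $\pi_\eps$ is steep (essentially smooth) on the relevant range, so that $\nabla\pi_\eps$ is a diffeomorphism of $\R^2$ onto the open cube $\{|m_i|<1\}$ and the conjugate inherits smoothness and strict convexity; but this steepness is precisely what the proof of the preceding lemma records, namely $\Gamma(h)\to-\infty$ as $|h|\to\infty$ whenever $|m_i|<1$. The remaining work is the bookkeeping of the factor $\tfrac12$ and of the domain of definition.
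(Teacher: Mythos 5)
Your proof is correct and follows the same route the paper indicates: the paper gives no argument beyond the remark that the lemma is ``an immediate consequence of the strict convexity of $\pi_\eps$ and the properties of the Legendre transform,'' and you simply fill in those details (the rescaling $\phi_\eps(m)=\pi_\eps^*(m/2)$, the gradient duality $\nabla\pi_\eps^*=(\nabla\pi_\eps)^{-1}$, steepness/uniqueness via the preceding lemma, and the explicit formulas from \eqref{2s.3}). Nothing to flag.
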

%
%



 \medskip

 Changing back to coordinates $(h_1,h_2)$ and $(m_1,m_2)$, let
 $\hat \pi_{\eps}(h_1,h_2)$
and $\hat \phi_{\eps}(m_1,m_2)$ denote
the functions $\pi_{\eps}(h_+,h_-)$
and $\phi_{\eps}(m_+,m_-)$ when $h_{\pm}$ and $m_{\pm}$ are expressed
in terms of $(h_1, h_2)$ and respectively $(m_1,m_2)$.  Thus $\hat \pi_{\eps}$
and $\hat \phi_{\eps}$ are the Legendre transform of each other:
    \begin{equation}
    \label{2s.6}
 \hat\phi_\eps(m_1,m_2)=  \sup_{(h'_1,h'_2)} \Big((h'_1m_1+h'_2m_2)-\hat \pi_\eps(h'_1,h'_2) \Big) =  (h_1m_1+h_2m_2)-\hat\pi_\eps(h_1,h_2)
    \end{equation}
where in the last equality $(h_1,h_2)$ are functions of $(m_1,m_2)$ via \eqref{2s.5}.

%

The following lemma is an immediate consequence of the above.

\begin{lem}
\label{lemma5s.2bis}

The map $(m_1,m_2) \mapsto \hat\phi_{\eps}(m_1,m_2)$ is strictly convex on $(-1,1)^2$ and $\hat m=(m_1,m_2)$ solves the equation
$ D \hat\phi_\eps(\hat m) =  \hat\theta $, $\hat\theta=(\hat\theta_1,\hat\theta_2)$, if and only if
    \begin{equation}
    \label{2s.8.1bis}
\hat m = D \hat\pi_\eps (\hat\theta),
    \end{equation}
    where $D\hat \pi_\eps$ and $D\hat \phi_\eps$ denote the gradient of $\hat \pi_\eps$ and $\hat \phi_\eps$.
    Moreover, calling
\[
m=(m_+,m_-), \;\; m_+ = \frac{\hat m_1+\hat m_2}2,\; m_- = \frac{\hat m_2-\hat m_1}2,
\]
\[
\theta=(\theta_+,\theta_-), \;\; \theta_+ = \frac{\hat \theta_1+\hat \theta_2}2,\; \theta_- = \frac{\hat \theta_2-\hat \theta_1}2,
\]
we have that $D \hat\phi_\eps(\hat m) = \hat \theta $ if and only if $\dis{D\phi_\eps(m) = \frac{\theta}2}$.

\end{lem}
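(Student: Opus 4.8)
The plan is to derive this lemma by transporting, through the invertible linear change of variables $L:(h_1,h_2)\mapsto(h_+,h_-)=(h_1+h_2,\,h_2-h_1)$ and its magnetization-side counterpart \eqref{2s.4}, the facts already proved for the pair $(\pi_\eps,\phi_\eps)$. First, since $\hat\pi_\eps=\pi_\eps\circ L$ and $\pi_\eps$ has positive definite Hessian (computed above), the chain rule gives $D^2\hat\pi_\eps=L^{T}\,(D^2\pi_\eps\circ L)\,L>0$, so $\hat\pi_\eps$ is smooth and strictly convex on $\R^2$. By \eqref{2s.6}, the function $\hat\phi_\eps$ in the statement is the Legendre transform of $\hat\pi_\eps$; just as Lemma \ref{lemma5s.2} deduced strict convexity of $\phi_\eps$ from that of $\pi_\eps$, the Legendre transform of a smooth strictly convex function is $C^1$ and strictly convex on the open set where the defining supremum is attained, which — in the $(m_1,m_2)$ coordinates, and as already shown above — is $(-1,1)^2$. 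This gives the first assertion.

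Next, for the gradient relation \eqref{2s.8.1bis}: Legendre duality makes $D\hat\phi_\eps$ and $D\hat\pi_\eps$ mutually inverse on the relevant open sets, so $D\hat\phi_\eps(\hat m)=\hat\theta$ holds exactly when $\hat m$ is the maximizer (unique, by strict convexity of $\hat\pi_\eps$) of $h\mapsto\langle h,\hat m\rangle-\hat\pi_\eps(h)$ at $h=\hat\theta$, i.e.\ when $\hat m=D\hat\pi_\eps(\hat\theta)$. This is the exact analogue of \eqref{2s.8.1}, the factor $2$ there coming from the $\tfrac12$ in \eqref{2s.7.1} that is absent from \eqref{2s.6}.

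Finally, to match the two formulations I would differentiate $\hat\pi_\eps=\pi_\eps\circ L$ by the chain rule, $D\hat\pi_\eps(h)=L^{T}(D\pi_\eps)(Lh)$, and combine this with Lemma \ref{lemma5s.2}, namely $D\phi_\eps(m)=\theta/2\iff m=2D\pi_\eps(\theta)$. Substituting the linear relations between $(\hat m,\hat\theta)$ and $(m,\theta)$ given in the statement — which are exactly \eqref{2s.4} and its $L^{T}$-transform on the $\theta$ variables — and unwinding the definitions then yields $D\hat\phi_\eps(\hat m)=\hat\theta\iff D\phi_\eps(m)=\theta/2$. The only delicate point is to keep the factors of $2$ straight under $L$ and $L^{T}$; beyond that bookkeeping no new analytic input is needed, which is why the lemma is ``an immediate consequence of the above.''
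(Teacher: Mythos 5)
Your approach is the right one, and it matches the one the paper implicitly uses (it gives no proof, calling the lemma ``an immediate consequence of the above''): the first two assertions follow by transporting the strict convexity and Legendre duality of $(\pi_\eps,\phi_\eps)$ through the invertible linear map $L$, using that \eqref{2s.6} is an ordinary Legendre transform (no $\tfrac12$ as in \eqref{2s.7.1}, which is why the factor $2$ of Lemma \ref{lemma5s.2} disappears in \eqref{2s.8.1bis}).

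The deferred ``bookkeeping,'' however, is precisely where the content lies, and you cannot wave it away. Write $L=\bigl(\begin{smallmatrix}1&1\\-1&1\end{smallmatrix}\bigr)$, so $L^{T}L=2\,\mathbb I$ and $(L^{T})^{-1}=\tfrac12 L$. Since $\hat\phi_\eps=\phi_\eps\circ L$, the chain rule gives $D\hat\phi_\eps(\hat m)=L^{T}(D\phi_\eps)(L\hat m)$, hence
\begin{equation*}
D\hat\phi_\eps(\hat m)=\hat\theta\ \Longleftrightarrow\ (D\phi_\eps)(L\hat m)=\tfrac12\,L\hat\theta,
\end{equation*}
which is $D\phi_\eps(m)=\theta/2$ for $m=L\hat m$ and $\theta=L\hat\theta$, i.e.\ $m_+=\hat m_1+\hat m_2$, $m_-=\hat m_2-\hat m_1$ and likewise for $\theta$ --- exactly \eqref{2s.4}. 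If you instead substitute the halved relations displayed in the lemma statement, the same computation yields $D\phi_\eps(2m)=\theta$, not the asserted $D\phi_\eps(m)=\theta/2$. So this step is not a formality: carrying it out shows the correct change of variables is the one in \eqref{2s.4} without the extra divisions by $2$, and a complete proof should say so explicitly (the displayed $\tfrac12$'s in the lemma appear to be a typo).
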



\noindent {\bf Proof of Proposition \ref{thm3s.2}}\\
It remains to prove that $\hat \phi_\eps$
is the canonical free energy of the two layers system.

%
%
%
 With $(h_1,h_2)$ as in \eqref{2s.6} and $Z$ as in \eqref{2s.1}, we get
    \begin{equation}
    \label{2s.10}
Z_{\eps,n}(m_1,m_2) = e^{-n (h_1m_1+h_2m_2)} (2Z)^{n}P_{h_1,h_2,\eps}\Big[
{ \sum_{x=1}^n \si_i(x) = m_i n, \; i=1,2} \Big],
    \end{equation}
where $P_{h_1,h_2,\eps}$ is the Gibbs measure with Hamiltonian
\[
-{\sum_{x=1}^n \{h_1 \si_1(x) +h_2 \si_2(x)\}- \eps \sum_{x=1}^n \si_1(x)\si_2(x)}
\]
hence the upper bound in \eqref{2ss.9}.
The lower bound follows by an application of Lemma~\ref{lemma_a1} below,
which gives a(n elementary) local limit
theorem lower bound for the product measure $P_{h_1,h_2,\eps}$ of the form
const $n^{-3/2}$, with a uniform constant over $n$ and $m$. \qed

\medskip

%

\begin{lem}
\label{lemma_a1}
With notation as above (see~(\ref{2s.10})), there is a
constant $c>0$ independent of $m=(m_1,m_2)\in\{-1+\frac2n,\ldots,1-\frac2n\}^2$
and $n\geq1$ such that
\begin{equation}
 \label{loc}
P_{h_1,h_2,\eps}\!\left[\sum_{x=1}^n \si_i(x) = m_i n, \; i=1,2 \right]\geq cn^{-3/2}.
\end{equation}
\end{lem}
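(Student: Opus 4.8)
The plan is to condition on the first--layer configuration, reducing \eqref{loc} to two one--dimensional local limit estimates. Write $T_i:=\sum_{x=1}^n\frac{\si_i(x)+1}{2}\in\{0,\dots,n\}$ and $a_i:=\frac{n(1+m_i)}{2}$; by the hypothesis on the $m_i$ in Proposition \ref{thm3s.2} each $a_i$ is an integer in $\{1,\dots,n-1\}$ (so $n\ge 2$), and the event in \eqref{loc} is precisely $\{T_1=a_1,\ T_2=a_2\}$. Under $P_{h_1,h_2,\eps}$ the pairs $(\si_1(x),\si_2(x))$ are i.i.d.; let $p_{++},p_{+-},p_{-+},p_{--}$ be the four single--pair weights, $p_1:=p_{++}+p_{+-}=\P(\si_1(x)=1)$, and $q_+:=p_{++}/p_1$, $q_-:=p_{-+}/(1-p_1)$ the conditional probabilities of $\{\si_2(x)=1\}$ given $\{\si_1(x)=\pm1\}$. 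Recall the defining property of $(h_1,h_2)$: $\E\si_i=m_i$, i.e. $p_1=\frac{1+m_1}{2}=a_1/n$ and $p_{++}+p_{-+}=\frac{1+m_2}{2}=a_2/n$.

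First I would write $\P(T_1=a_1,T_2=a_2)=\P(T_1=a_1)\,\P(T_2=a_2\mid T_1=a_1)$ and bound the two factors separately. Marginally $T_1\sim\mathrm{Bin}(n,p_1)$ with $a_1=np_1$ its mean, so $\P(T_1=a_1)$ is a binomial pmf evaluated exactly at its integer mean; a direct application of Stirling's formula gives $\P(T_1=a_1)\ge \frac{e^{-1/6}}{\sqrt{2\pi}}\sqrt{\frac{n}{a_1(n-a_1)}}\ge c_0\,n^{-1/2}$ with $c_0>0$ an absolute constant, uniformly in $1\le a_1\le n-1$ (no restriction on $m_1$ away from the endpoints is needed). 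For the second factor, since the pairs are independent and, within a layer, exchangeable, the conditional law of $T_2$ given $\{T_1=a_1\}$ is that of $\mathrm{Bin}(a_1,q_+)+\mathrm{Bin}(n-a_1,q_-)$ (only the number of $+$ sites matters, not their location), a Poisson--binomial law on $\{0,\dots,n\}$. Its mean is $a_1q_++(n-a_1)q_-=np_1\cdot\frac{p_{++}}{p_1}+n(1-p_1)\cdot\frac{p_{-+}}{1-p_1}=n(p_{++}+p_{-+})=a_2$, an integer. Since a Poisson--binomial pmf is unimodal and has a mode at its mean whenever the mean is an integer, $a_2$ realizes the maximum of this pmf, which, being supported on $n+1$ points, is at least $(n+1)^{-1}$. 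Multiplying, $\P(T_1=a_1,T_2=a_2)\ge c_0\,n^{-1/2}(n+1)^{-1}\ge c\,n^{-3/2}$ for all $n\ge 2$, with $c$ absolute, and this is \eqref{loc}.

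The conceptual content is the conditioning step together with the identity $\E[T_2\mid T_1=a_1]=a_2$, which forces the second factor to be read off at the centre of the conditional law however degenerate that law becomes; everything else is routine. The one point requiring a little care is uniformity in $m$: as $m$ approaches $\partial([-1,1]^2)$ some of the weights $p_{\pm\pm}$ (hence $p_1$, $q_\pm$) tend to $0$, but (i) both $T_1$ and $(T_2\mid T_1=a_1)$ are evaluated precisely at their integer means, where the pmf is largest; (ii) the Stirling bound for $\P(T_1=a_1)$ carries its own constant, with the endpoint cases $a_1\in\{1,n-1\}$ checked directly; and (iii) for the second factor nothing beyond unimodality and the trivial $\max\ge(n+1)^{-1}$ was used, so degeneration can only help. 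I therefore expect no genuine obstacle. If one wanted the sharper bound $c\,n^{-1}$ in place of $c\,n^{-3/2}$, one would replace step (iii) by an honest local limit lower bound $\ge c_1 n^{-1/2}$ for the Poisson--binomial factor at its mean — Chebyshev plus unimodality when the conditional variance stays bounded, and Gnedenko's local limit theorem otherwise — but this refinement is not needed here.
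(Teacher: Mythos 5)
Your argument is correct, and it takes a genuinely different route from the paper's. The paper rotates the two-layer walk by $-\pi/4$ and rescales so that $\sum_x(\si_1(x),\si_2(x))$ becomes a simple planar walk $X$, then conditions on the number $H(n)$ of horizontal steps; given $H(n)=k$ the two coordinates of $X(n)$ are independent one-dimensional simple walks, and a carefully chosen $k_n\approx hn$ (with the right parity) allows three separate Stirling lower bounds, each of order $n^{-1/2}$, to be multiplied together. You instead condition directly on $T_1=a_1$, get the first $n^{-1/2}$ from a single Stirling bound for $\mathrm{Bin}(n,p_1)$ evaluated at its integer mean, and then observe that the conditional law of $T_2$ is a Poisson--binomial $\mathrm{Bin}(a_1,q_+)+\mathrm{Bin}(n-a_1,q_-)$ with integer mean $a_2$; the mode of a Poisson--binomial with integer mean sits exactly at that mean (this is Darroch's theorem, 1964 --- worth a citation, since it is the only non-elementary ingredient), and a mode of a pmf on $n+1$ points carries mass at least $(n+1)^{-1}$. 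What the paper's argument buys is self-containedness: Stirling is the only tool, at the cost of the rotation, the parity bookkeeping, and three separate local-limit estimates. What yours buys is structural economy: the conditioning is on the natural observable $T_1$ rather than on an auxiliary count, the identity $\E[T_2\mid T_1=a_1]=a_2$ falls out for free, and degeneration of $m$ toward $\partial[-1,1]^2$ is absorbed automatically by unimodality rather than handled by explicit constant-tracking in Stirling. Both give the same (non-sharp) $n^{-3/2}$.
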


\noindent{\bf Remark} {\em The sharp bound is $cn^{-1}$,
but~(\ref{loc}) is enough for our purposes, and requires a shorter argument, given next.}

\medskip

\noindent{\it Proof of Lemma~\ref{lemma_a1}.}  We first note that $\sum_{x=1}^n (\si_1(x), \si_2(x))$ is a
two dimensional random walk with jumps to the nearest diagonals. We rotate by $-\pi/4$ radians
and rescale space by $1/\sqrt2$ in order to get a simple 2-d random walk, denoted by
$X=(X(n)=(X_1(n),X_2(n)))_{n\geq0}$, with mean jump $m_*=(m_+,m_-)/2$.
Notice two things:

\begin{enumerate}
\item $m_*\in\{(i,j)/n:\, i,j=-n+2,-n+3,\ldots,n-3,n-2;\, i,j$ have the same parity\};
\item $X_1(n)$ and $X_2(n)$ have the same parity for every $n\geq0$.
\end{enumerate}

In these terms we want to  prove a lower bound for
\begin{equation}
 \label{loc1}
P_{h_1,h_2,\eps}(X(n)=m_*n).
\end{equation}

Let $p_1,q_1,p_2,q_2$ denote the jump probabilities of $X$ to the right, left, up and down,
respectively. Notice that $m_+=p_1-q_1$ and $m_-=p_2-q_2$.

Let $H(n)$ denote the number of horizontal steps given by $X$ in the first
$n$ steps. Then $H(n)$ has a binomial distribution with success probability $h:=p_1+q_1$.
Let us assume that $h\leq1/2$ for the remainder of the argument. A similar reasoning holds
in the other case. Now given $H(n)=k$, we have that $(X_1(n),X_2(n))=(Y_1(k),Y_2(n-k))$,
where $Y_1,Y_2$ are independent  simple random walks in 1d with respective jump probabilities to the
right $r=p_1/(p_1+q_1)$ and $s=p_2/(p_2+q_2)$. Notice that $p_1+q_1>0$, $p_2+q_2>0$.

We thus have that for any $k=0,1,\ldots,n$
\begin{eqnarray}\nonumber
&&P_{h_1,h_2,\eps}(X(n)=m_*n)\geq P_{h_1,h_2,\eps}(X(n)=m_*n,\,H(n)=k)\\
 \label{loc2}
&=&P_{h_1,h_2,\eps}(Y_1(k)=m_+n)
                           P_{h_1,h_2,\eps}(Y_2(n-k)=m_-n)
                           P_{h_1,h_2,\eps}(H(n)=k).
\end{eqnarray}

Let now $Y'_1(k)=(Y_1(k)+k)/2$. Then $Y'_1(k)$ is binomial with parameters $k$ and $r$, and the first
probability in~(\ref{loc2}) equals
\begin{equation}
 \label{loc3}
P_{h_1,h_2,\eps}(Y'_1(k)=(m_+n+k)/2).
\end{equation}
Similarly, the second probability in~(\ref{loc2}) equals
\begin{equation}
 \label{loc4}
P_{h_1,h_2,\eps}(Y'_2(n-k)=(m_-n+n-k)/2),
\end{equation}
where $Y'_2(n-k)$ is binomial with parameters $n-k$ and $s$.

We will now choose $k=k_n$ either $\lf hn\rf$ or $\lf hn\rf+1$ so that $k_n$ has the same parity as $m_+n$.
Notice that in this case $k_n\geq1$ and $(m_+n+k_n)/2$ is an integer.

A straightforward recourse to Stirling shows that the last probability in~(\ref{loc2}) is bounded from below by
a constant times $1/\sqrt n$. We next argue that the same holds for the probabilities in~(\ref{loc3}) and~(\ref{loc4}) ,
and we will be done. We consider the first such probability; the second one can be similarly treated.

We have that  $k_n=(p_1+q_1)n+\theta_n$, where $\theta_n\in(-1,1]$, and $(m_+n+k_n)/2=p_1n+\frac{\theta_n}2$.
Notice that $Y'_1(k_n)$ has mean $k_nr=p_1n+r\theta_n$. It again follows readily from Stirling that the  probability
in~(\ref{loc3}) is bounded from below by a constant times $1/\sqrt n$ (notice that if $p_1=0$, then $\theta_n$ also
vanishes). \qed

\section{Properties of the mean field free energy}
\label{sec:8s}

\setcounter{equation}{0}

To prove Proposition \ref{thm4s.1} we study the \emph{free energy} given by
    \eqref{4s.1} (two layers with a small vertical n.n. interaction).
We shall exploit the smallness of $\eps$, observing that for $\eps=0$ we have the
well known explicit expression:
    \begin{equation}
    \label{8s.2}
\hat f_0(m_1,m_2):= -\frac{m_1^2}2 -I(m_1) -\frac{m_2^2}2 - I(m_2),
    \end{equation}
where  the entropy $I(m)$ is given by
    \begin{equation}
    \label{8s.3}
I(m) = - \frac{1-m}2 \log \frac{1-m}2 - \frac{1+m}2 \log \frac{1+m}2,\;\; m \in [-1,1].
    \end{equation}
The function $-\frac 12 m^2 -I(m)$ is a symmetric  convex function
 of $m$  with a quartic minimum at 0 so that
    \begin{equation}
    \label{8s.4}
    \hat f_0(m_1,m_2) \ge \hat f_0(0,0) + c(m_1^4 + m_2^4).
    \end{equation}

\medskip

\begin{lem}
        \begin{equation}
    \label{8s.5}
 \hat f_0(m_1,m_2) - \eps \le   \hat f_\eps(m_1,m_2) \le \hat f_0(m_1,m_2) + \eps.
    \end{equation}
\end{lem}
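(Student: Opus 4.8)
The plan is to exploit the fact that switching the vertical interaction on or off changes the energy of any configuration by at most $\eps$ per site, and to transfer this crude pointwise bound to the free energy. Concretely, I would argue at the level of the canonical partition function $Z_{\eps,n}(m_1,m_2)$ of \eqref{2ss.8}. Since $\si_1(x)\si_2(x)\in\{-1,+1\}$, on \emph{every} configuration $\big|\sum_{x=1}^n\si_1(x)\si_2(x)\big|\le n$, whence the elementary pointwise domination
\[
e^{-\eps n}\,Z_{0,n}(m_1,m_2)\ \le\ Z_{\eps,n}(m_1,m_2)\ \le\ e^{\eps n}\,Z_{0,n}(m_1,m_2).
\]
Taking $-\frac1n\log(\cdot)$ and letting $n\to\infty$, Proposition \ref{thm3s.2} applied both to the given $\eps$ and to $\eps=0$ identifies the two limits as $\hat\phi_\eps(m_1,m_2)$ and $\hat\phi_0(m_1,m_2)$ — first for $m_1,m_2$ on the relevant rational grid, then for all $(m_1,m_2)\in[-1,1]^2$ by the continuity asserted in Proposition \ref{thm3s.2} — so that $\hat\phi_0-\eps\le\hat\phi_\eps\le\hat\phi_0+\eps$. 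Since $\hat f_\eps=-\frac12(m_1^2+m_2^2)+\hat\phi_\eps$ by \eqref{4s.1}, and the same identity at $\eps=0$ recovers the explicit \eqref{8s.2} (the $\eps=0$ system being two independent layers with canonical free energy $-I(m_1)-I(m_2)$), subtracting the common quadratic term yields \eqref{8s.5}.

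A second, essentially equivalent route stays closer to the explicit computations already made in this appendix and works with the pressure rather than the partition function. From \eqref{2s.1}, $\hat\pi_\eps=\log 2+\log\big(e^\eps\cosh h_+ + e^{-\eps}\cosh h_-\big)$; since $\cosh h_\pm\ge 0$ and $e^{-\eps}\le e^{\eps}$ for $\eps\ge0$, one has $e^{-\eps}(\cosh h_+ +\cosh h_-)\le e^\eps\cosh h_+ + e^{-\eps}\cosh h_-\le e^{\eps}(\cosh h_+ +\cosh h_-)$, i.e.\ $\hat\pi_0-\eps\le\hat\pi_\eps\le\hat\pi_0+\eps$ with $\hat\pi_0(h_1,h_2)=\log(4\cosh h_1\cosh h_2)$ (using $\cosh h_+ +\cosh h_- = 2\cosh h_1\cosh h_2$). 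The Legendre transform \eqref{2s.6} is order-preserving in the transformed function and sends an additive constant $\pm c$ to $\mp c$, so this gives directly $\hat\phi_0-\eps\le\hat\phi_\eps\le\hat\phi_0+\eps$, and hence \eqref{8s.5} after restoring the quadratic term.

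I do not expect a genuine obstacle here; the proof is short. The only points worth a sentence of care are (i) verifying that $\hat f_0$ as \emph{defined} by \eqref{8s.2}--\eqref{8s.3} coincides with $-\frac12(m_1^2+m_2^2)+\hat\phi_0$, which is the standard Legendre-duality identity between $\log(2\cosh h)$ and the binary entropy, and (ii) making sure the comparison is claimed for all $(m_1,m_2)\in[-1,1]^2$ and not merely on the discrete grids, which is immediate from the continuity of $\hat\phi_\eps$ and $\hat\phi_0$ furnished by Proposition \ref{thm3s.2}.
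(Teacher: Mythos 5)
Your second route is exactly the paper's proof: the paper just notes that $\hat\pi_0-\eps\le\hat\pi_\eps\le\hat\pi_0+\eps$ (immediate from \eqref{2s.1}) and invokes \eqref{2s.6}. Your first, canonical-ensemble route is a correct equivalent alternative, though it is a slight detour since the paper already has the grand-canonical pressure in hand. One small phrasing slip: the Legendre transform is order-\emph{reversing}, not order-preserving; but since it also sends an additive constant $+c$ to $-c$, the two sign flips cancel and the sandwich $\hat\phi_0-\eps\le\hat\phi_\eps\le\hat\phi_0+\eps$ follows just as you claim — most transparently by inserting $\hat\pi_0-\eps\le\hat\pi_\eps\le\hat\pi_0+\eps$ directly inside the $\sup$ in \eqref{2s.6}.
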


\medskip

\begin{proof}
It follows at once from \eqref{2s.6} and \eqref{2s.1} since
$\hat\pi_0 (\cdot)-\eps \le \hat\pi_\eps(\cdot) \le \hat\pi_0 (\cdot)+\eps$.
%
\end{proof}

\medskip

\begin{cor}
There is $c'>0$ so that
    \begin{equation}
    \label{8s.7}
\inf_{m_1,m_2}\hat f_\eps(m_1,m_2) = \inf_{(m_1,m_2) \in \mathcal G_{c'}}\hat f_\eps(m_1,m_2)
    \end{equation}
 where
     \begin{equation}
    \label{8s.7a}
\mathcal G_c =\Big\{(m_1,m_2)\in [-1,1]\times  [-1,1]:|m_i| \le c  \eps^{1/4}, i=1,2\Big\}.
    \end{equation}

\end{cor}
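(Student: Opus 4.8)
\textbf{Proof proposal for the Corollary (equation \eqref{8s.7}).}

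The plan is to use the sandwich bound \eqref{8s.5} together with the quartic lower bound \eqref{8s.4} for $\hat f_0$ to localize the infimum of $\hat f_\eps$ near the origin. First I would observe that $\hat f_\eps$ attains its infimum somewhere on the compact square $[-1,1]^2$ since it is continuous (it is convex plus a quadratic, and in any case Proposition \ref{thm4s.1}, proved in this same appendix, already gives the minimizers explicitly); call a minimizer $(m_1^*,m_2^*)$ and let $\hat f_{\eps,\mathrm{eq}}$ be the minimum value. Evaluating at the origin and using the upper bound in \eqref{8s.5}, $\hat f_{\eps,\mathrm{eq}} \le \hat f_\eps(0,0) \le \hat f_0(0,0) + \eps$. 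On the other hand, for \emph{any} $(m_1,m_2)$ the lower bound in \eqref{8s.5} combined with \eqref{8s.4} gives
\[
\hat f_\eps(m_1,m_2) \ge \hat f_0(m_1,m_2) - \eps \ge \hat f_0(0,0) + c(m_1^4+m_2^4) - \eps.
\]

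Now I would compare the two displays: if $(m_1,m_2)$ is a minimizer then $\hat f_0(0,0) + c(m_1^4+m_2^4) - \eps \le \hat f_{\eps,\mathrm{eq}} \le \hat f_0(0,0) + \eps$, hence $c(m_1^4+m_2^4) \le 2\eps$, i.e.\ $m_i^4 \le 2\eps/c$ for $i=1,2$, which gives $|m_i| \le (2/c)^{1/4}\,\eps^{1/4}$. Setting $c' := (2/c)^{1/4}$ we conclude that every minimizer of $\hat f_\eps$ lies in $\mathcal G_{c'}$, which is exactly \eqref{8s.7}: the unconstrained infimum equals the infimum over $\mathcal G_{c'}$ because at least one global minimizer already sits in that set, and the constrained infimum is trivially $\ge$ the unconstrained one.

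There is essentially no obstacle here; the only point requiring a word of care is that $\mathcal G_{c'}$ as defined in \eqref{8s.7a} is intersected with $[-1,1]^2$, so one should check $c'\eps^{1/4} \le 1$ for $\eps$ small enough, which is automatic, and that the constant $c$ from \eqref{8s.4} is the one fixed there (independent of $\eps$), so that $c'$ is a genuine absolute constant. Everything else is the routine chain of inequalities above.
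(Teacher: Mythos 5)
Your proof is correct and is essentially the same argument as in the paper: both use the sandwich bound \eqref{8s.5} together with the quartic lower bound \eqref{8s.4} to derive $\hat f_\eps(m_1,m_2) \ge \hat f_\eps(0,0) + c(m_1^4+m_2^4) - 2\eps$ (you phrase this as a bound on minimizers, the paper as an inequality outside $\mathcal G_{c'}$, but the chain of inequalities is identical). The closing remark about $c'\eps^{1/4}\le 1$ and the $\eps$-independence of $c$ is a nice touch but not needed beyond a word.
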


\medskip

\begin{proof}
Using \eqref{8s.5} and \eqref{8s.4}, we easily see that
\[
  \hat f_\eps(m_1,m_2) \ge
   \hat f_\eps(0,0) + c(m_1^4 + m_2^4) - 2\eps,
   \]
so that $\hat f_\eps(m_1,m_2) \ge
   \hat f_\eps(0,0)$ if $(m_1,m_2) \notin \mathcal G_{c'}$ with $c'$ large enough,
   hence \eqref{8s.7}.
	\end{proof}

%
%

\vskip.5cm

We denote by $f_\eps(m)$, $m=(m_+,m_-)$, the  function $\hat f_\eps(m_1,m_2)$ when $m_1, m_2$ are
written in terms of $m_{\pm}$ as in \eqref{2s.4}. In the sequel  $m=(m_+,m_-)$, $h=(h_+,h_-)$ and
    \begin{equation}
    \label{3.18.1.1}
 \langle h, m \rangle:= h_+m_++h_-m_- = 2(m_1h_1+m_2h_2).
    \end{equation}
    Given $m$ and taking $h$ as in \eqref{2s.7} we then have
    \begin{equation}
    \label{3.18.3}
f_\eps(m)= -\frac{1}4 (m_+^2+m_-^2) + \phi_\eps(m) =
 -\frac{1}4 \langle m, m\rangle  + \frac 12 \langle h, m\rangle  -\pi_\eps(h).
    \end{equation}

%
%
%
%
By \eqref{8s.7} the inf of $f_\eps(m)$ is achieved
in the set $\mathcal G_c$ for $c$ large enough and the minimizers are critical
points   in  such a set. Denoting by $D$ the gradients, the critical points satisfy
    \begin{equation}
    \label{3.18.5}
Df_\eps(m) = -\frac{1}2 m + D\phi_\eps(m) =0.
    \end{equation}
Then by Lemma \ref{lemma5s.2} with $\theta=m$,
   \begin{eqnarray}
    \label{3.19}
m_+ &=& 2e^\eps \frac{\sinh (m_+ )}{e^\eps \cosh (m_+ )+e^{-\eps} \cosh (m_- )},\nonumber
\\
\\ m_- &=& 2e^{-\eps} \frac{\sinh (m_- )}{e^\eps \cosh (m_+ )+e^{-\eps} \cosh (m_- )}. \nonumber
    \end{eqnarray}
Of course $m_{+}=m_{-}=0$ is a solution.
%
In the next lemma we shall prove that
any   solution   has $m_-=0$.

\medskip

\begin{lem}
For any $x\in \mathbb R$ and any $\eps\ge 0$ the equation
   \begin{eqnarray}
    \label{3.19.0}
y &=& 2e^{-\eps} \frac{\sinh (y )}{e^\eps \cosh (x )+e^{-\eps} \cosh (y )},\quad y \in \mathbb R
    \end{eqnarray}
has a unique solution: $y=0$.

\end{lem}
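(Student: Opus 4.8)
The claim is that for fixed $x\in\mathbb{R}$ and $\eps\ge 0$, the equation $y = 2e^{-\eps}\sinh(y)/(e^\eps\cosh(x)+e^{-\eps}\cosh(y))$ has $y=0$ as its only real solution. First I would observe that $y=0$ is indeed a solution, and that by oddness of the right-hand side in $y$ it suffices to rule out positive solutions. Rearranging, a nonzero solution would satisfy $e^\eps\cosh(x)+e^{-\eps}\cosh(y) = 2e^{-\eps}\sinh(y)/y$, i.e.
\[
e^\eps\cosh(x) = e^{-\eps}\Bigl(\frac{2\sinh(y)}{y} - \cosh(y)\Bigr).
\]
The plan is to show that the function on the right, call it $e^{-\eps}G(y)$ with $G(y):=2\sinh(y)/y - \cosh(y)$, satisfies $G(y) < 1$ for all $y\neq 0$ (with $G(0)=1$ by continuity). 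Since the left-hand side is $e^\eps\cosh(x)\ge e^\eps \ge 1 \ge e^{-\eps}\cdot 1 > e^{-\eps}G(y)$, no nonzero solution can exist. So the entire lemma reduces to the elementary inequality $2\sinh(y)/y < \cosh(y) + 1$ for $y\neq 0$.

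To prove $2\sinh(y)/y < 1 + \cosh(y)$, I would compare Taylor coefficients. We have $2\sinh(y)/y = \sum_{k\ge 0} \frac{2\,y^{2k}}{(2k+1)!}$ and $1+\cosh(y) = 2 + \sum_{k\ge 1}\frac{y^{2k}}{(2k)!}$. The constant terms agree ($=2$), and for each $k\ge 1$ one checks $\frac{2}{(2k+1)!} < \frac{1}{(2k)!}$, i.e. $2 < 2k+1$, which holds for all $k\ge 1$. Hence $1+\cosh(y) - 2\sinh(y)/y = \sum_{k\ge1}\bigl(\frac{1}{(2k)!} - \frac{2}{(2k+1)!}\bigr)y^{2k} > 0$ for $y\neq 0$, since every coefficient is strictly positive and at least the $k=1$ term is nonzero. (Alternatively, set $\psi(y) := y(1+\cosh y) - 2\sinh y$; then $\psi(0)=0$, $\psi'(y) = 1 + \cosh y + y\sinh y - 2\cosh y = 1 - \cosh y + y\sinh y$, $\psi''(y) = -\sinh y + \sinh y + y\cosh y = y\cosh y$, so $\psi'' > 0$ for $y>0$, giving $\psi' $ increasing from $\psi'(0)=0$, hence $\psi'>0$ on $(0,\infty)$, hence $\psi>0$ there, and oddness handles $y<0$.)

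I do not expect a serious obstacle here; the lemma is genuinely elementary once one isolates $\cosh(x)$ and notices that the $x$-dependence only helps (the left side is bounded below by $e^\eps\ge 1$). The only mild care needed is the division by $y$, which is legitimate precisely because we are looking for $y\neq 0$, and the observation that the bound $G(y)<1$ combined with $e^{-\eps}\le 1 \le e^\eps\cosh x$ closes the argument for every $\eps\ge 0$ simultaneously. This lemma, together with the previous one forcing $m_-=0$ on the branch $m_+=m_-$ versus the other branch, will feed into the reduction of the critical-point equations \eqref{3.19} to a single scalar equation for $m_+$, which is presumably the next step in Appendix \ref{sec:8s}.
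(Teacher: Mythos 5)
Your proof is correct and is essentially the same as the paper's: after reducing to $y>0$ by oddness, both arguments bound $\cosh(x)\ge 1$, then compare the Taylor coefficients of $\cosh(y)$ and $2\sinh(y)/y$, with the key inequality $(2k+1)!>2(2k)!$ for $k\ge 1$ and the observation $e^\eps\ge e^{-\eps}$ closing the gap. Your alternative $\psi$-argument via two derivatives is a harmless variant of the same coefficient comparison.
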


\begin{proof} If $y$ solves \eqref{3.19.0} then so does $-y$. Therefore we only need to prove that there is no solution with $y>0$.  Define
    \[
  U(y):=\frac{ 2e^{-\eps} }{e^\eps +e^{-\eps} \cosh (y )} \sinh (y ).
    \]
Since $U(y)$ is not smaller than the right hand side of  \eqref{3.19.0},
the lemma will be proved once we show that $U(y)<y$ for all $y>0$.  Suppose by contradiction that
there is $y>0$ so that $y \le U(y)$.  Then
    \[
{e^\eps +e^{-\eps} \cosh (y )}  \le{ 2e^{-\eps} } \frac{\sinh (y )} y,
    \]
which yields:
   \[
\{e^\eps +e^{-\eps}\} + e^{-\eps}\sum_{n\ge 1}  \frac{y^{2n}}{(2n)!}  \le{ 2e^{-\eps} }
+e^{-\eps} \sum_{n\ge 1}  \frac{2y^{2n}}{(2n+1)!}.
    \]
But this last inequality is not true, since $ e^\eps +e^{-\eps} \ge  2e^{-\eps}$ and
$(2n+1)!> 2(2n)!$ for $n\ge 1$.
\end{proof}


%
%
%
%
%
%

\medskip
We can thus put $m_-=0$ in the first equation of
\eqref{3.19} which then becomes
an equation for $m_+$ alone. The proof of Proposition \ref{thm4s.1} is then a consequence of
the following lemma:

\medskip

\begin{lem}
There is $\delta>0$ so that for all $\eps>0$ small enough the equation
   \begin{eqnarray}
    \label{3.19.1}
x &=& 2e^{\eps} \frac{\sinh (x )}{e^\eps \cosh (x )+e^{-\eps}},\quad x \in (0,\delta)
    \end{eqnarray}
has a unique solution $x_\eps$ and
   \begin{eqnarray}
    \label{3.19.2}
|{x_\eps}- \sqrt{ {12}{ \eps}}| \le c \eps^{3/2}.
    \end{eqnarray}

\end{lem}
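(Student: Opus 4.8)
The plan is to study the function
\[
G(x) := 2e^{\eps}\,\frac{\sinh x}{e^\eps\cosh x + e^{-\eps}} - x, \qquad x \in [0,\delta),
\]
whose zeros in $(0,\delta)$ are exactly the solutions of \eqref{3.19.1}, and to show that $G$ has exactly one such zero, located near $\sqrt{12\eps}$. First I would record the small-$x$ Taylor expansion. Writing $\sinh x = x + x^3/6 + O(x^5)$ and $\cosh x = 1 + x^2/2 + O(x^4)$, and setting $A_\eps := e^\eps + e^{-\eps} = 2\cosh\eps$, one gets
\[
\frac{\sinh x}{e^\eps\cosh x + e^{-\eps}} = \frac{x + x^3/6 + O(x^5)}{A_\eps + \tfrac12 e^\eps x^2 + O(x^4)}
= \frac{x}{A_\eps}\Big(1 + \tfrac16 x^2 - \tfrac{e^\eps}{2A_\eps} x^2 + O(x^4)\Big).
\]
Multiplying by $2e^\eps$ and subtracting $x$, the equation $G(x)=0$ for $x\neq 0$ is equivalent to
\[
\frac{2e^\eps}{A_\eps} - 1 + x^2\Big(\frac{e^\eps}{3A_\eps} - \frac{e^{2\eps}}{A_\eps^2}\Big) + O(x^4) = 0 .
\]
Now $2e^\eps/A_\eps - 1 = (e^\eps - e^{-\eps})/A_\eps = \tanh\eps = \eps + O(\eps^3)$, and the coefficient of $x^2$ equals $e^\eps/(3A_\eps) - e^{2\eps}/A_\eps^2 = -\tfrac{1}{12} + O(\eps)$ (using $A_\eps = 2 + O(\eps^2)$, $e^\eps = 1 + O(\eps)$). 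Thus the reduced equation reads
\[
\eps - \frac{x^2}{12} + O(\eps^3) + O(\eps x^2) + O(x^4) = 0,
\]
which strongly suggests $x_\eps^2 = 12\eps(1 + O(\eps))$, i.e. $x_\eps = \sqrt{12\eps} + O(\eps^{3/2})$.

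To turn this into a rigorous existence-uniqueness-and-estimate statement I would substitute $x^2 = 12\eps\,t$ (equivalently work with the variable $t = x^2/(12\eps)$) and define $\Psi(t,\eps) := \tfrac{1}{12\eps}\big(2e^\eps\tfrac{\sinh x}{e^\eps\cosh x + e^{-\eps}}\big)^2/\cdots$ — more cleanly, I would rewrite $G(x)=0$, $x>0$, as a fixed-point/implicit equation $t = F(t,\eps)$ where $F$ is smooth on a neighborhood of $(1,0)$ with $F(1,0)=1$ and $\partial_t F(1,0)$ strictly less than $1$ in absolute value (indeed the $O(x^4)$ and $O(\eps x^2)$ terms contribute derivatives that are $O(\eps)$). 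Then the implicit function theorem (or the contraction mapping principle on a small interval of $t$'s around $1$, uniformly for $\eps$ small) yields a unique solution $t_\eps$ with $|t_\eps - 1| \le c\eps$, hence a unique $x_\eps \in (0,\delta)$ with $x_\eps = \sqrt{12\eps\,t_\eps}$ and $|x_\eps - \sqrt{12\eps}| = \sqrt{12\eps}\,|\sqrt{t_\eps}-1| \le c'\eps^{3/2}$, which is \eqref{3.19.2}. One must separately check there is no other zero of $G$ in $(0,\delta)\setminus\{x_\eps\text{'s neighborhood}\}$: for that I would note $G(0)=0$, $G'(0) = 2e^\eps/A_\eps - 1 = \tanh\eps > 0$, so $G>0$ just to the right of $0$; $G(x_\eps)=0$ with $G$ crossing downward there (since the $x^2$-coefficient is negative), and $G$ stays negative on $(x_\eps,\delta)$ because for $x$ of order $1$ the dominant $-x$ term wins — concretely $2e^\eps\sinh x/(e^\eps\cosh x + e^{-\eps}) < 2\tanh x < 2x$ is not quite enough, so instead I would fix $\delta$ small and use the expansion to see $G(x) \le \eps - x^2/24 < 0$ on, say, $(\sqrt{48\eps},\delta)$ once $\eps$ is small, while on $(0,\sqrt{48\eps})$ the reduced equation analysis already pins down the unique root. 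Combined with Lemma preceding (giving $m_- = 0$) and the corollary restricting minimizers to $\mathcal G_c$, this lemma yields Proposition \ref{thm4s.1}: the nonzero critical points of $\hat f_\eps$ are $\pm(m_\eps,m_\eps)$ with $m_\eps = x_\eps/2$... (one would track the factor-of-two bookkeeping between $m_+$ and $m_1$, which is why $\sqrt{12\eps}$ here becomes $\sqrt{3\eps}$ in \eqref{4s.2}), and \eqref{4.19.-1} follows from the strict convexity of $\phi_\eps$ plus the quartic lower bound \eqref{8s.4} applied in the complement of $U_\zeta$.

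The main obstacle I anticipate is not the leading-order computation but the \emph{uniformity in $\eps$} of all error terms and the clean separation of the two small parameters $x$ and $\eps$: one has to be careful that the $O(x^4)$ remainder in the expansion of $\sinh x/(e^\eps\cosh x + e^{-\eps})$ has a constant independent of $\eps$ (true, since the denominator is bounded below by $A_\eps \ge 2$ and numerator/denominator are analytic with $\eps$-uniform bounds on $|x| \le \delta$), and that after the substitution $x^2 = 12\eps t$ the resulting function of $(t,\eps)$ extends smoothly (or at least $C^1$ with uniform bounds) across $\eps = 0$ so that the implicit function theorem applies with constants not blowing up. A secondary, purely bookkeeping, obstacle is translating the result back through the two changes of variables $(m_+,m_-) \leftrightarrow (m_1,m_2)$ and $h \leftrightarrow \theta = m$ to land exactly on the constant $\sqrt{3\eps}$ in \eqref{4s.2} rather than $\sqrt{12\eps}$ — this is where the various factors of $2$ in \eqref{2s.4}, \eqref{3.18.1.1} and Lemma \ref{lemma5s.2} have to be tracked without error.
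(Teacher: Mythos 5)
Your proposal is correct and follows essentially the same route as the paper: Taylor expansion of $\sinh x/(e^\eps\cosh x+e^{-\eps})$ near $x=0$, identification of the leading balance $\eps - x^2/12 \approx 0$, and a quantitative inverse/implicit-function argument to pin down the root. The paper streamlines this by first dividing the equation by $x$ (thus discarding the trivial root $x=0$) and substituting $z=x^2$, so that the equation becomes $F(z)=0$ with $F$ an explicit power series satisfying $F(12\eps)=O(\eps^2)$ and $F'(z)\ge a>0$ on $(0,\delta)$; this makes both the uniqueness on $(0,\delta)$ and the bound $|z^*-12\eps|\le c\eps^2/a$ (hence $|x_\eps-\sqrt{12\eps}|\le c\eps^{3/2}$) immediate, avoiding the IFT machinery and the separate case-splitting for uniqueness that your sketch requires.
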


\noindent
\begin{proof}  \eqref{3.19.1} can be rewritten 
   \begin{eqnarray}
    \label{3.19.3}
e^\eps \cosh (x )+e^{-\eps} &=& 2e^{\eps} \frac{\sinh (x )}x,\quad x \in (0,\delta)
    \end{eqnarray}
and therefore as $F(x^2)=0$, where for $z \ge 0$
   \begin{eqnarray}
    \label{3.19.4}
F(z) &=& \{e^\eps+e^{-\eps} + \frac{z}{2}\; e^\eps+ z g_1(z) \}-\{2e^{\eps} + \frac{z}{3}\;e^\eps + z g_2(z)\}
    \end{eqnarray}
and, 
   \begin{eqnarray*}
g_1(z) = e^{\eps}\sum_{n\ge 2}  \frac{z^{n-1}}{(2n)!},\quad
g_2(z) = 2e^{\eps}\sum_{n\ge 2}  \frac{z^{n-1}}{(2n+1)!}.
    \end{eqnarray*}
We have:
   \begin{eqnarray}
    \label{3.19.5}
F(12\eps) &=& \{e^\eps+e^{-\eps} + 6\eps \; e^\eps+ (12\eps) g_1(12\eps) \}-\{2e^{\eps} + 4\eps \;e^\eps + (12\eps) g_2(12\eps)\}\nonumber\\
&=& c \eps^2.
    \end{eqnarray}
On the other hand for any $a< 1/6$ there are $\delta_0$ and $\eps_0$ positive so that
for any  $\delta\in (0,\delta_0)$ and $\eps\in (0,\eps_0)$ we have
\[
\frac{dF}{dz} \ge a \text{ on }  (0,\delta),
\]
Hence there is a unique $z^*\in (0,\delta)$ so that $F(z^*)=0$. Moreover
\[
|z^* - {12}{\eps}| \le \frac{c\eps^2}{a},
\]
proving the lemma.
\end{proof}
%
%
%
%

\medskip


    \medskip

\begin{thm}
\label{thm4.2}
For any $\eps>0$ small enough the Hessian $D^2f_\eps$ of the free energy $f_\eps$
is positive definite at the  minimizers
$\pm m^{(\eps)}$.

\end{thm}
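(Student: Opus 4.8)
The plan is to turn $D^2 f_\eps$ at a minimizer into an explicit $2\times 2$ matrix and read off the sign of its eigenvalues. Recall from \eqref{3.18.3} that $f_\eps(m)=-\tfrac14(m_+^2+m_-^2)+\phi_\eps(m)$, and that $\phi_\eps$ is the transform \eqref{2s.7.1} of $\pi_\eps$, so at the maximizing $h$ one has $\tfrac12 m=D\pi_\eps(h)$ and $D\phi_\eps(m)=\tfrac12 h$. Differentiating these two relations gives $D^2\phi_\eps(m)=\tfrac14\bigl(D^2\pi_\eps(h)\bigr)^{-1}$, whence
\[
D^2 f_\eps(m)=-\tfrac12\,I+\tfrac14\bigl(D^2\pi_\eps(h)\bigr)^{-1},
\]
with $I$ the $2\times 2$ identity. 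At a critical point the equation \eqref{3.18.5} together with $D\phi_\eps(m)=\tfrac12 h$ forces $h=m$; and in the $(m_+,m_-)$ coordinates the minimizer $m^{(\eps)}=(m_\eps,m_\eps)$ becomes $(m_+,m_-)=(2m_\eps,0)$, consistently with the fact that $m_+=x_\eps$ solves \eqref{3.19.1}. Since $D^2\pi_\eps$ is positive definite (shown right before \eqref{2s.2}), proving that $D^2 f_\eps(m^{(\eps)})$ is positive definite is equivalent to $D^2\pi_\eps(2m_\eps,0)\ \prec\ \tfrac12\,I$. By the $m\mapsto -m$ symmetry of $f_\eps$ the point $-m^{(\eps)}$ is handled identically.

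The next step is to evaluate the explicit second derivatives of $\pi_\eps$ displayed before \eqref{2s.2} at $(h_+,h_-)=(2m_\eps,0)$. Because $\partial^2_{h_+h_-}\pi_\eps=-\sinh(h_+)\sinh(h_-)/Z^2$ and $\sinh 0=0$, the matrix $D^2\pi_\eps(2m_\eps,0)$ is diagonal, and it remains only to bound its two diagonal entries by $\tfrac12$. For the $h_-$ entry one gets $e^{-\eps}/Z$ with $Z=e^\eps\cosh(2m_\eps)+e^{-\eps}$, and $e^{-\eps}/Z<\tfrac12$ is equivalent to $e^{-2\eps}<\cosh(2m_\eps)$, which holds for every $\eps>0$. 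For the $h_+$ entry, using $\sinh^2=\cosh^2-1$ and $\cosh h_-=1$ one simplifies $\partial^2_{h_+}\pi_\eps$ to $\bigl(\cosh(2m_\eps)+e^{2\eps}\bigr)/Z^2$, and after clearing denominators the bound $\partial^2_{h_+}\pi_\eps<\tfrac12$ reduces to $\cosh^2(2m_\eps) > 2-e^{-4\eps}$.

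Finally I would close this last inequality with the asymptotics of $m_\eps$ from Proposition \ref{thm4s.1}, equivalently $|m_\eps-\sqrt{3\eps}|\le c\eps^{3/2}$ of \eqref{4s.2} (equivalently $|x_\eps-\sqrt{12\eps}|\le c\eps^{3/2}$, \eqref{3.19.2}, with $x_\eps=2m_\eps$), which gives $4m_\eps^2=12\eps+O(\eps^2)$. Since $\sinh t>t$ for $t>0$,
\[
\cosh^2(2m_\eps)=1+\sinh^2(2m_\eps)>1+4m_\eps^2=1+12\eps+O(\eps^2),
\]
whereas $2-e^{-4\eps}=1+(1-e^{-4\eps})<1+4\eps$; hence $\cosh^2(2m_\eps)>2-e^{-4\eps}$ for all $\eps$ small enough, so both diagonal entries of $D^2\pi_\eps(2m_\eps,0)$ are $<\tfrac12$ and the theorem follows. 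The only delicate point, which I would flag as the main obstacle, is exactly this $h_+$ direction: the entry $\partial^2_{h_+}\pi_\eps(2m_\eps,0)$ equals $\tfrac12$ to leading order, so one must extract the sign of its $O(\eps)$ correction, and this is precisely where the quantitative estimate $m_\eps^2=3\eps+O(\eps^2)$ — the same input that separates the nontrivial minimizers from the origin — enters, producing a correction $-\eps+O(\eps^2)$; consequently $D^2 f_\eps(\pm m^{(\eps)})$ is in fact positive definite with smallest eigenvalue of order $\eps$.
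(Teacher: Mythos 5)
Your proposal is correct and follows essentially the same route as the paper: both pass through the Legendre-transform identity $D^2 f_\eps = -\tfrac12 I + \tfrac14 (D^2\pi_\eps)^{-1}$, observe that $D^2\pi_\eps$ is diagonal at $(h_+,h_-)=(2m_\eps,0)$, and invoke $m_\eps^2=3\eps+O(\eps^2)$ to show each diagonal entry of $2D^2\pi_\eps$ lies strictly below $1$. The only difference is cosmetic (you compare $\cosh^2(2m_\eps)$ with $2-e^{-4\eps}$ while the paper Taylor-expands to get $2G_{++},2G_{--}\le 1-\eps/2$), and both correctly identify that the $h_+$ direction is marginal to leading order so that the $O(\eps)$ correction coming from the size of $m_\eps$ is what decides the sign.
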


\medskip

\noindent
\begin{proof} Since we shall be dealing with functions in the $m$ and $h$ domain, to prevent confusion we write $D_m$ and $D_h$
for the corresponding gradients (and for the Jacobian matrices). Analogously for the corresponding Hessian matrices. From \eqref{3.18.5} and \eqref{2s.6}, by differentiating we have:
 $\dis{D^2_m f_\eps= - \frac 12 \mathbb{I} + D^2_m \phi_\eps}$ and
\[
D_m \phi_\eps = \frac h2,\quad D^2_m \phi_\eps = \frac {D_mh}2.
\]
Since $h(m)$ is the inverse of $m(h)$:
\[
D_h m D_m h  = \mathbb{I}.
\]
On the other hand, by \eqref{2s.8.2},  $m = 2 D_h \pi$, so that 
\[
D_h m = 2G,\quad G = D^2_h\pi
\]
and therefore
\[
2G D_m h  = \mathbb{I},\quad D_m h = \frac 12  G^{-1}
\]
and, in conclusion,
\[
D^2_m \phi_\eps =  \frac 14  G^{-1}
\]
Thus
\[
  D^2_m f_\eps=  - \frac 12\mathbb{I}  + \frac{1}{4} G^{-1} =  \frac 1{4}G^{-1} \Big( \mathbb{I}-2G\Big).
\]
The elements of $G$ are given in the equations which follow
\eqref{2s.3} and they must be computed at $m^{(\eps)}$,
so that $m_-=0$. Then $G$ is diagonal and its diagonal elements, denoted by $G_{++}$
and $G_{--}$, are:
\begin{eqnarray}
\label{9s.1}
2G_{++}   &=&  2e^\eps\frac{\cosh (h_+ )[e^\eps\cosh (h_+ )+ e^{-\eps}] -e^{\eps} \sinh^2 (h_+ )}
{[e^\eps\cosh (h_+ )+ e^{-\eps}]^2}\nn\\
&=& 2e^\eps\frac{e^\eps   + e^{-\eps}\cosh (h_+ )}{[e^\eps\cosh (h_+ )+ e^{-\eps}]^2} \le
\frac{2e^\eps}{e^\eps\cosh (h_+ )+ e^{-\eps}} \le 1 - \frac{\eps}{2}.
\end{eqnarray}
The last inequality holds for $\eps$ small enough and it is proved as follows.
We develop in Taylor series all terms up to first order in $\eps$, thus the equality below
are meant modulo terms in $\eps^2$. Recalling \eqref{3.19.2} we also bound from below $h_+^2=m_\eps^2 > 8\eps$.
The last fraction in \eqref{9s.1} is then bounded by
\[
\frac{2+2\eps}{(1+\eps)(1+h_+^2/2)+1-\eps}<\frac{2+2\eps}{2 + 4\eps}
=\frac{1+ \eps}{1 + 2\eps} \le 1 -  \eps,
\]
hence \eqref{9s.1} for $\eps$ small enough.  We have
\begin{eqnarray}
\label{9s.2}
2G_{--}   &=&
\frac{2e^{-\eps}}{e^\eps\cosh (h_+ )+ e^{-\eps}} \le  1 - \frac{\eps}{2}
\end{eqnarray}
for $\eps$ small enough (and for any value of $h_+$).
We have thus seen
that $\mathbb{I}-2G$ is diagonal and its diagonal elements are $\ge \frac\eps 2$
for $\eps$ small.
\end{proof}

    \medskip

\begin{cor}
\label{coro4.1}
For any $\eps>0$ small enough there is $c>0$ so that for any $\zeta>0$ small enough:
    \begin{equation}
    \label{4.19.0}
    \Big| f_\eps(m)- f_\eps(m^{(\eps)})\Big|  \ge c \zeta^2,\quad \text{for all $m$ such that
    $|m\mp m^{(\eps)}| \ge \zeta$}.
    \end{equation}

\end{cor}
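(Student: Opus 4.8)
The plan is to deduce Corollary \ref{coro4.1} from Proposition \ref{thm4s.1} (more precisely from Theorem \ref{thm4.2} and the quartic lower bound \eqref{8s.4}) by a standard local-quadratic-plus-compactness argument. First I would fix $\eps>0$ small enough that all the conclusions of Theorem \ref{thm4.2} hold, so that $D^2 f_\eps$ is positive definite at each of the two minimizers $\pm m^{(\eps)}$ with smallest eigenvalue bounded below by some $\kappa=\kappa(\eps)>0$ (indeed $\kappa \gtrsim \eps$, but we only need $\kappa>0$). By Taylor's theorem with the integral form of the remainder there is a radius $\rho=\rho(\eps)>0$ so that
\[
f_\eps(m) - f_\eps(\pm m^{(\eps)}) \ge \frac{\kappa}{4}\, |m \mp m^{(\eps)}|^2
\quad\text{whenever } |m \mp m^{(\eps)}| \le \rho .
\]
This handles the region near the two minimizers.

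Next I would treat the region away from the minimizers. Split the set $\{m : |m\mp m^{(\eps)}|\ge\zeta\}$ into the part inside $\mathcal G_{c'}$ (the small box of \eqref{8s.7a}) intersected with $\{|m-m^{(\eps)}|\le\rho\}\cup\{|m+m^{(\eps)}|\le\rho\}$, where the quadratic bound above gives the lower bound $\frac{\kappa}{4}\zeta^2$; and the complementary part, namely $m$ with $|m\mp m^{(\eps)}|>\rho$ for both signs, or $m\notin\mathcal G_{c'}$. On that complementary part I claim $f_\eps(m) - f_\eps(m^{(\eps)}) \ge \delta_0$ for some constant $\delta_0=\delta_0(\eps)>0$ independent of $\zeta$: for $m\notin\mathcal G_{c'}$ this is exactly the content of \eqref{8s.7} together with \eqref{8s.4} and \eqref{8s.5} (the excess is at least $c(m_1^4+m_2^4)-2\eps$ which is bounded below by a positive constant once $|m_i|\ge c'\eps^{1/4}$ with $c'$ chosen large); for $m\in\mathcal G_{c'}$ with $|m\mp m^{(\eps)}|>\rho$, compactness of $\mathcal G_{c'}$, continuity of $f_\eps$, and the fact that $\pm m^{(\eps)}$ are the \emph{only} minimizers on that compact set (Proposition \ref{thm4s.1}) force a strictly positive minimum of $f_\eps(m)-f_\eps(m^{(\eps)})$ over the closed set $\mathcal G_{c'}\cap\{|m- m^{(\eps)}|\ge\rho\}\cap\{|m+ m^{(\eps)}|\ge\rho\}$.

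Finally I would combine the two regimes: for any $\zeta$ small (in particular $\zeta\le\rho$), and any $m$ with $|m\mp m^{(\eps)}|\ge\zeta$ for both signs, either $m$ lies within distance $\rho$ of one minimizer, in which case $f_\eps(m)-f_\eps(m^{(\eps)})\ge\frac{\kappa}{4}\zeta^2$, or it does not, in which case $f_\eps(m)-f_\eps(m^{(\eps)})\ge\delta_0\ge\delta_0\rho^{-2}\,\zeta^2$. Taking $c = \min\{\kappa/4,\ \delta_0\rho^{-2}\}$ yields \eqref{4.19.0}. The only mildly delicate point is the uniformity claim "for any $\zeta>0$ small enough": the constants $\kappa,\rho,\delta_0$ must be chosen depending only on $\eps$ and \emph{before} $\zeta$, which the argument above does; no step genuinely uses $\zeta\to 0$ except the harmless requirement $\zeta\le\rho$. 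I expect the main (and really the only) obstacle is bookkeeping the region decomposition cleanly so that every $m$ is covered; each individual estimate is either Taylor's theorem, the already-established quartic bound \eqref{8s.4}--\eqref{8s.5}, or compactness, none of which is hard.
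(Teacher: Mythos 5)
Your argument is correct and uses the same essential ingredients as the paper's proof: the positive-definiteness of $D^2 f_\eps$ at the minimizers (Theorem~\ref{thm4.2}) for the near-minimizer quadratic lower bound, the quartic bound \eqref{8s.4}--\eqref{8s.5} together with \eqref{8s.7} to push the infimum into the compact box $\mathcal G_{c'}$, and a uniform positive excess on the region bounded away from $\pm m^{(\eps)}$. The paper's own proof reaches the same conclusion by a slightly different bookkeeping: it claims $D_m f_\eps \ne 0$ on $\{|m\mp m^{(\eps)}|\ge\zeta\}\cap\{|m|\le c\,\eps^{1/4}\}$ to force the infimum to the boundary. That claim is actually not literally true, since $m=0$ lies in that set and is a critical point of $f_\eps$ (a saddle: the Hessian at $0$ has a negative eigenvalue in the $m_+$ direction and a positive one in the $m_-$ direction); the conclusion nonetheless holds because a saddle cannot carry the infimum. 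Your compactness-plus-uniqueness-of-minimizers argument reaches the same positive constant $\delta_0$ without invoking the nonvanishing-gradient claim and thus sidesteps that small imprecision. Otherwise the two proofs are the same.
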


\medskip

\noindent
\begin{proof} From what was already seen, the inequality holds if $|m| \ge
c \eps^{1/4}$ with $c$
large enough.  The infimum of $f_\eps(m)$ in  $|m\mp m^{(\eps)}| \ge \zeta$ must then be achieved
in the set
    \[
    \{|m\mp m^{(\eps)}| \ge \zeta\} \cap \{|m| \le
c \eps^{1/4}\}.
    \]
In such a set $D_m f_\eps \ne 0$, thus the infimum must
be achieved at the boundaries, hence \eqref{4.19.0}.

\end{proof}

\setcounter{equation}{0}

\section{Multi-canonical constraints}
\label{sec:7s}
%

The setup is the following:
$I=C^{\ell_-}_0=[0,\ell_-)\cap \Z$ where, recalling \eqref{2.2}, $\ell_- = \ga^{-(1-\alpha)}$, $\alpha>0$ and small. Let
$(m_1(x), m_2(x)) \in [-1,1]^2$ for $x \in I$, $(\bar m_1(x), \bar m_2(x))\in [-1,1]^2$ for $x \in \Z \setminus I$.
Dropping the dependence on $\gamma$ and $I$, let
    \begin{eqnarray}
    \label{5.1}
 \mathcal F (m\,|\,\bar m)& = &\sum _{x\in I} \hat\phi_\eps (m_1(x),m_2(x)) - \sum_{i=1,2}\Big\{\frac 12 \sum_{x\ne y\in I} J_\ga(x,y)
 m_i(x)m_i(y)\nonumber \\ &+& \sum_{x\in I, y\notin I}  J_\ga(x,y)m_i(x)\bar m_i(y)\Big\}
    \end{eqnarray}
where $\hat\phi_\eps$ is the canonical free energy in Proposition \ref{thm3s.2}.

Proposition \ref{thm3s.4} follows at once from the result below, which is
the analogue for two layers
of Theorem 6.4.1.1 of \cite{presutti}, after the
vertical interaction is added in.

\medskip

\begin{prop}
\label{multi}  
For all $\ga$ small enough and all
$u=(u_1,u_2)$, $u_i\in [-1,1]$, there is a unique
$\hat m$ such that
$\dis{\sum_{x\in I}  \hat m_i(x)= |I|\,u_i}$, $i=1,2$, and
    \begin{eqnarray}
    \label{5.3}
 \mathcal F (m\,|\,\bar m)\ge \mathcal F (\hat m\,|\,\bar m),\quad \text{for all $m$ such that }\; \sum_{x\in I} m_i(x)= |I|\,u_i,\;i=1,2.
    \end{eqnarray}
Moreover the minimizer $\hat m$ is smooth in the sense that
there is a constant $c$ so that 
\[
\max_{i=1,2}\max_{x \in I}|\hat m_i(x)-u_i| \le c \ga^\alpha.
\]

\end{prop}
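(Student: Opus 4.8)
The plan is to deduce existence, uniqueness and smoothness at once from the Legendre duality between $\hat\phi_\eps$ and $\hat\pi_\eps$ established in Section~\ref{sec:2s}, together with the scale separation built into \eqref{2.2}: since $\ell_-=\ga^{-(1-\alpha)}\ll\ga^{-1}$, the interval $I$ is much shorter than the interaction range, so the Kac coupling \emph{restricted to $I$} is a vanishingly small perturbation. First I would record two facts. Because $\hat\phi_\eps=\hat\pi_\eps^*$, on $(-1,1)^2$ one has $D\hat\phi_\eps=(D\hat\pi_\eps)^{-1}$ and $D^2\hat\phi_\eps(m)=\big(D^2\hat\pi_\eps(h)\big)^{-1}$ with $h=D\hat\phi_\eps(m)$, and $D\hat\phi_\eps$ is unbounded as $m\to\partial[-1,1]^2$ since $D\hat\pi_\eps$ maps $\R^2$ bijectively onto $(-1,1)^2$, see \eqref{2s.8.2}. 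From the explicit second-derivative formulas for $\pi_\eps$ in Section~\ref{sec:2s} one checks that each diagonal Hessian entry is $\le 1$, hence (positive definiteness) $\|D^2\pi_\eps\|\le 2$, and in the $(h_1,h_2)$ variables $\|D^2\hat\pi_\eps\|\le 4$ on all of $\R^2$; consequently $D^2\hat\phi_\eps\ge\tfrac14\,\mathbb I$ everywhere on $(-1,1)^2$ and $D\hat\pi_\eps$ is globally $4$-Lipschitz.

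For existence and uniqueness: the constraint set $V_u=\{m\in[-1,1]^{2|I|}:\sum_{x\in I}m_i(x)=|I|u_i,\ i=1,2\}$ is compact and convex and $\mathcal F(\cdot\,|\,\bar m)$ is continuous on it, so a minimizer exists. For the Kac part, for any $v$,
\[
\Big|\sum_{i=1,2}\sum_{x\ne y\in I}J_\ga(x,y)v_i(x)v_i(y)\Big|\le\Big(\max_x\sum_{y\in I}J_\ga(x,y)\Big)\|v\|^2\le|I|\max_{x,y}J_\ga(x,y)\,\|v\|^2\le c\,\ga^\alpha\|v\|^2,
\]
using $|I|=\ga^{-(1-\alpha)}$ and $J_\ga\le c_\ga\ga\|J\|_\infty$. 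Hence on the interior $\langle v,D^2\mathcal F\,v\rangle\ge(\tfrac14-c\,\ga^\alpha)\|v\|^2>0$ for $\ga$ small, so $\mathcal F(\cdot\,|\,\bar m)$ is strictly convex there. Assuming $|u_i|<1$ for $i=1,2$ (when some $|u_i|=1$ that layer is frozen and the claim is trivial), the boundary steepness of $\hat\phi_\eps$ forbids a minimizer with any coordinate equal to $\pm1$, so the minimizer is interior; strict convexity then makes it the unique critical point of $\mathcal F$ on the affine set $V_u$, hence the unique minimizer $\hat m$.

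For smoothness: at the interior minimizer the Lagrange condition gives $\lambda=(\lambda_1,\lambda_2)$ with, for all $x\in I$ and $i=1,2$,
\[
\partial_{m_i}\hat\phi_\eps\big(\hat m_1(x),\hat m_2(x)\big)=h_i(x):=\lambda_i+\sum_{y\in I,\,y\ne x}J_\ga(x,y)\hat m_i(y)+\sum_{y\notin I}J_\ga(x,y)\bar m_i(y),
\]
i.e.\ $\hat m(x)=D\hat\pi_\eps\big(h(x)\big)$. Since $J\in C^1$ with support in $[-1,1]$, $J_\ga(\cdot,y)$ has Lipschitz constant $O(\ga^2)$ and only $O(\ga^{-1})$ indices $y$ contribute; as $|x-x'|\le\ell_-$ for $x,x'\in I$, the multipliers cancel and $|h_i(x)-h_i(x')|\le c\,\ga^2\cdot\ell_-\cdot\ga^{-1}=c\,\ga^\alpha$. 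Being $4$-Lipschitz, $D\hat\pi_\eps$ then gives $|\hat m_i(x)-\hat m_i(x')|\le c\,\ga^\alpha$ for all $x,x'\in I$, and averaging over $x'$ with $\frac1{|I|}\sum_{x'}\hat m_i(x')=u_i$ yields $\max_i\max_{x\in I}|\hat m_i(x)-u_i|\le c\,\ga^\alpha$.

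The step I expect to be the main obstacle is the global strict convexity of $\mathcal F(\cdot\,|\,\bar m)$: it relies on the \emph{uniform} lower bound $D^2\hat\phi_\eps\ge\tfrac14\mathbb I$ over the whole open square, so that the steep behaviour of $\hat\phi_\eps$ near $\partial[-1,1]^2$ does not destroy convexity, and on the fact that — precisely because $\ell_-\ll\ga^{-1}$ — the Kac quadratic form on $I$ has operator norm $O(\ga^\alpha)\to0$; both must be made quantitative uniformly in $\bar m$, in the base interval $I$, and in $\ga$. Once these are in place, the Euler–Lagrange equation and the slow variation of the effective field are routine.
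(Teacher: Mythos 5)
Your proposal is correct, and it takes a genuinely different route from the paper. The paper introduces the Lagrange multipliers $\la$ \emph{a priori}, interpolates the spatial inhomogeneity with a parameter $s\in[0,1]$, solves the tilted unconstrained problem via the fixed--point equation $m(x)=2D\pi_\eps(\theta(x))$, and then tunes $\la(s)$ along an ODE (checking $|\la'(s)|=O(\ga^\alpha)$ and $|D_\la m|=O(\ga^\alpha)$) so that the canonical constraint is preserved up to $s=1$; the smoothness estimate is read off from the $O(\ga^\alpha)$ fluctuations of $\theta(x)$. You instead solve the constrained problem directly: existence by compactness and continuity of $\hat\phi_\eps$ on $[-1,1]^2$, interiority from the boundary steepness of $\hat\phi_\eps$ (which does follow, as you note, from $D\hat\pi_\eps$ being a bijection of $\R^2$ onto the open square, though you assert rather than spell out the one--sided derivative blow--up), uniqueness from the uniform bound $D^2\hat\phi_\eps\ge\tfrac14\mathbb I$ (via $\|D^2\hat\pi_\eps\|\le 4$) against the $O(\ga^\alpha)$ operator norm of the Kac form on $I$, and smoothness from the \emph{a posteriori} Euler--Lagrange equation together with the $O(\ga^\alpha)$ oscillation of the effective field $h(x)$ over $I$ (using $J\in C^1$ and $\ell_-\ll\ga^{-1}$) and the global Lipschitz bound on $D\hat\pi_\eps$. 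Both arguments ultimately rest on the same two structural facts --- uniform convexity inherited from $\hat\pi_\eps$ and the smallness of the Kac coupling restricted to the short interval $I$ --- but your version dispenses entirely with the continuation/ODE machinery of the paper (which follows the template of Theorem 6.4.1.1 in \cite{presutti}) and is shorter and more self--contained; what it gives up is the explicit quantitative control of $\la$ and of $D_\la m$ that the paper's construction produces along the way, which however is not used elsewhere in the paper. You are also somewhat more careful than the paper in justifying the positivity of the Hessian (the analogue of \eqref{5.7}), which the paper states without proof.
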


\medskip

\noindent
{\it  Proof.} The statement is trivially true when $|u_1|\vee |u_2|=1$,
We therefore assume in the sequel that $|u_1|\vee |u_2|< 1$.

\noindent \emph{Remark.} As in Section \ref{sec:5s}, for the free energy computation it is sometimes convenient to use the
variables $m_{\pm}(x), x \in I$ as in \eqref{2s.4}; we then write  $m(x)=(m_+(x), m_-(x))$ and analogously
$\bar m(x)= (\bar m_+(x),\bar m_-(x))$, and write  $\hat\phi_\eps(m_1(x), m_2(x))=\phi_\eps(m(x))$,
with the function $\phi_\eps$ given by \eqref{2s.7}.

To find the minimizers under the above constraint, we introduce the Lagrange multipliers
$\la=(\la_+,\la_-)$
and define
    \begin{eqnarray}
    \label{5.4}
 \mathcal F_\la (m\,|\,\bar m)& = &
\sum _{x\in I}\Big( \phi_\eps (m(x)) - \frac 12 \langle m(x), \la\rangle  - \frac 12 \langle \kappa(x),  m(x)\rangle \Big)
\nonumber \\ &-& \frac 14  \sum_{x\ne y\in I} J_\ga(x,y)
\langle m(x), m (y)\rangle  + \frac 12 \langle \la, u\rangle  |I|
    \end{eqnarray}
with $u_\pm= u_2\pm u_1$, $\langle a, b\rangle = a_+b_++a_-b_-$ and
    \begin{eqnarray}
    \label{5.5}
\kappa_\pm(x) := \sum_{y\notin I} J_\ga(x,y) \bar m_{\pm}(y),\quad x\in I.
     \end{eqnarray}
Observe that
$ \mathcal F_\la (m\,|\,\bar m)= \mathcal F  (m\,|\,\bar m)$
for all $m$ under the constraint in \eqref{5.3}.
Let $\bar\kappa=\frac {1}{|I|}\sum_{x\in I}\kappa(x)$. We
introduce an interpolating parameter  $s\in [0,1]$ and  define
    \begin{eqnarray}
    \label{5.6}
 \mathcal F_{\la,s} (m\,|\,\bar m)& = &\sum_{x\in I} \{\phi_\eps (m(x))- \frac 12 \langle m(x), [\la+\bar\kappa]\rangle
    -s\Big( \sum_{x\in I}\{\frac 12 \langle [\kappa(x)-\bar\kappa],
 m(x)\rangle \}\nonumber \\ &-&
 \frac 14 \sum_{x\ne y \in I} J_\ga(x,y)
\langle m(x), m (y)\rangle  \Big) + \frac 12 \langle \la, u\rangle  |I|
\end{eqnarray}
so that $ \mathcal F_{\la,1}= \mathcal F_{\la}$. To find the minimizer of $ \mathcal F_{\la,s} (m\,|\,\bar m)$
we need to find its critical points, namely the solutions of
   \begin{equation}
    \label{5.8}
\frac{\partial \mathcal F_{\la,s} (m\,|\,\bar m)}{\partial m(x)}=D\phi_\eps( m(x)) - \frac 12 \theta(x) = 0
\end{equation}
where
     \begin{eqnarray}
    \label{5.8.00}
\theta(x)= \frac 12 (\la +\bar\kappa)  + \frac s2  \big(\kappa(x)-\bar\kappa \big) + \frac s2 \sum_{y\ne x, y\in I} J_\ga(x,y) m(y)
   \end{eqnarray}
By an explicit computation:
   \begin{equation*}
\frac{\partial^2 \mathcal F_{\la,s} (m\,|\,\bar m)}{\partial m(x)\partial m(y)}=
D^2 \phi_\eps (m(x))\mathbf 1_{x=y} - sJ_\ga(x,y) \mathbf 1_{x\neq y\in I}
\end{equation*}
which   for $\ga$ small enough  is a positive  symmetric operator, namely there is
$c>0$ so that
    \begin{eqnarray}
    \label{5.7}
 \sum_{x\in I} \langle\psi(x),  D^2 \phi_\eps (m(x))\psi(x)\rangle -
\sum_{x\ne y \in I} sJ_\ga(x,y) \langle\psi(x), \psi(y)\rangle
 \ge        c      \sum_{x\in I} \langle\psi(x), \psi(x) \rangle
   \end{eqnarray}
This shows that if there is a critical point  of $\mathcal F_{\la,s}$ it is unique and it
 minimizes  $\mathcal F_{\la,s}$.
By  \eqref{5.8}  $m$ is a critical point if $D  \phi_\eps (m(x)) = \frac {\theta(x)}2$
for all $x$.  By Lemma \ref{lemma5s.2}, $m(x) = 2 D \pi_\eps (\theta(x))$
which is \eqref{2s.8.2} namely
   \begin{eqnarray}
    \label{5.11}
m_+(x) &=& 2e^\eps \frac{\sinh (\theta_+(x) )}{e^\eps \cosh (\theta_+(x) )+e^{-\eps} \cosh (\theta_-(x) )} \nonumber
\\
\\ m_-(x) &=& 2e^{-\eps} \frac{\sinh (\theta_-(x) )}{e^\eps \cosh (\theta_+(x) )+e^{-\eps} \cosh (\theta_-(x) )}. \nonumber
    \end{eqnarray}
Observing that the right hand side depends weakly on $m$ as $|\sum_{y\in I} J_\ga(x,y) m(y)| \le \ga^{\alpha}$, we
then get, as in the proof of  Theorem 6.4.1.1 in \cite{presutti}, that
there is a unique solution $m_{\la,s}$ of \eqref{5.11} which is the unique
minimizer of $\mathcal F_{\la,s}$ and whose fluctuations are of order $O(\ga^{\alpha})$.
To conclude the proof of the theorem it suffices to show that there exists a function $\la(s)$ such that
   \begin{equation}
    \label{5.12}
  \sum_{x\in I} m_{\la(s),s}(x) = u\, |I|,\quad \text{for all $s\in [0,1]$}.
    \end{equation}
There is obviously
a unique solution $\la(0)$ of \eqref{5.12} when $s=0$.  We will find $\la(s)$ by solving the
evolution equation
   \begin{equation}
    \label{5.10.0}
\sum_{x\in I}  \frac{d}{ds}
\Psi (\theta_{\la(s),s}(x))=\sum_{x\in I}
D_\theta\Psi (\theta_{\la(s),s}(x))\frac{d}{ds} \theta_{\la(s),s}(x)= 0,\quad s\in [0,1]
    \end{equation}
obtained by differentiating  \eqref{5.12} and recalling
that  $m= 2 D \pi_\eps (\theta)=:2\Psi(\theta)$ where the explicit expression of $2\Psi=2(\Psi_+(\theta),\Psi_-(\theta)) $
is given by the r.h.s. of \eqref{5.11}.

We will now prove that $\la(s)$ is differentiable and its derivative has order $\ga^\alpha$.
We proceed by supposing that $\la$ is differentiable and get a formula for its derivative.
We will then check that the primitive for such an expression is indeed  $\la(s)$.  The formula
will also show that  $\la'(s)$  has order $\ga^\alpha$.

By \eqref{5.10.0} we first need to show that $\theta_{\la(s),s}$ is differentiable in $s$.
But $\theta_{\la(s),s}(x)$ defined through \eqref{5.8.00} can be expressed as
\begin{equation}\label{th01}
  \theta_{\la(s),s}(x)=\theta^{(0)}_{\la(s)}+s\;\theta^{(1)}_{\la(s)}(x),
\end{equation}
where $\theta^{(0)}_{\la(s)}=\frac 12 (\la+\bar \kappa)$ and
$\theta^{(1)}_{\la(s)}(x)=\frac 12  \bigg[\big(\kappa(x)-\bar\kappa \big) +  \sum_{y\ne x, y\in I} J_\ga(x,y) m_{\la(s)}(y)\bigg]$, so that
\begin{equation}\label{th1bound}
  |\theta^{(1)}_{\la(s)}(x)|\le c\ga^\alpha.
\end{equation}
We have:

\begin{eqnarray}
  \frac{d\theta_{\la(s),s}}{ds}(x) &=& {\partial}_\la \theta_{\la(s),s}(x)\frac{d\la}{ds}+
	\frac{\partial\theta_{\la(s),s}}{\partial s}(x)
\\
\nn && \hskip -5cm\text{with:}
\\
\label{pstheta}
\frac{\partial\theta_{\la(s),s}}{\partial s}(x)&=&\theta^{(1)}_{\la(s)}(x)
\\
\label{pltheta}
{\partial}_\la \theta_{\la(s),s}(x)&=&{\partial}_\la \theta^{(0)}+ s\;{\partial}_\la \theta^{(1)}=
\frac 12 \mathbb{I}+s\; \sum_{y\in I}J_\ga(x,y)D_\la m_\la,
\end{eqnarray}
where ${\partial}_\la := \bigg(\frac{\partial }{\partial \la_+},\frac{\partial }{\partial \la_-}\bigg)$, $\mathbb{I}$ is the identity matrix, and $m_\la$ is defined by equation:
\begin{eqnarray}
\label{mtheta}
m(x)= \Psi(\tilde\theta_{\la, m}(x))
\end{eqnarray}
 where $\tilde\theta_{\la, m}$ as the same expression of $\theta$ \eqref{5.8.00} but as a function of $\la, m$, namely:
\begin{equation}\label{tildetheta}
  \tilde\theta_{\la, m}(x)=\frac 12 (\la+ \bar\kappa)-\frac s2\bigg[(\kappa(x)-\bar\kappa)+\sum_{y\neq x,y\in I} J_\ga(x,y) m(y)\bigg].
\end{equation}

We postpone the proof that $m_\la$ is differentiable in $\la$ and there is a constant $c$ so that:
\begin{equation}\label{Dmla}
  |D_\la m|<c \ga^{\alpha}
\end{equation}
which, recalling \eqref{pltheta},  implies that there is a constant $c_J$ so that:
\begin{equation}\label{Dmlb}
  \frac 12 - c_J\ga^\alpha<\|\partial_\la \theta\|< \frac 12 + c_J\ga^\alpha.
\end{equation}
Going back to \eqref{5.10.0} and
denoting by $\Upsilon_\theta$ the operator that acts on a vector $v(x)$ as $\Upsilon_\theta v(x):=\sum_{x\in I}D_\theta\Psi(\theta_{\la(s),s}(x)) v(x)$, we write in a compact form:
\begin{eqnarray}
[\Upsilon_\theta{\partial}_\la \theta_{\la(s),s}] \frac{d\la}{ds}&=& -  \Upsilon_\theta \theta^{(1)}_{\la(s),s}.
\end{eqnarray}

There are positive constants $c^\pm_\eps$ (see \eqref{5.11}) so that:
\begin{equation}
|I|c^-_\eps<\|\Upsilon_\theta\|<|I|c^+_\eps.
\label{12a}
\end{equation}
Then, by \eqref{Dmlb}, $[\Upsilon_\theta{\partial}_\la \theta_{\la(s),s}] $ is invertible if $\ga$ is small enough.  Hence $\frac {d\la}{ds}$ is well defined and
\begin{equation}\label{bound}
  \left| \frac{d\la}{ds}\right|= \bigg|\Big[\Upsilon_\theta{\partial}_\la \theta_{\la(s),s}\Big]^{-1}\Upsilon_\theta \theta^{(1)}_{\la(s),s}\bigg|<c\ga^\alpha
\end{equation}
Last inequality follows by \eqref{pstheta}.
\vskip 1cm
Let us now prove \eqref{Dmla}. Differentiating \eqref{mtheta}:
\begin{eqnarray}
D_\la m= D_{\tilde\theta} \Psi \bigg[\partial_\la \tilde \theta+
\partial_m \tilde\theta \cdot D_\la m\bigg]
\end{eqnarray}
then
\begin{eqnarray}
\bigg[1-D_{\tilde\theta} \Psi \cdot \partial_m \tilde\theta\bigg]D_\la m= D_\theta \Psi \cdot
\partial_\la \tilde\theta
\end{eqnarray}
 By \eqref{tildetheta} we see that $\partial_\la \tilde\theta=1/2 \mathbb I$,
 and $ D_m \tilde\theta$ is  a smooth function order $o(s\ga^{\alpha})$.
Then for $\ga$ small enough $\|D_\theta \Psi D_m \theta\|<1$ and
$\bigg[1-D_\theta \Psi D_m \theta\bigg]$ is invertible and
\begin{eqnarray}
\label{04}
D_\la m=\bigg[1-D_\theta \Psi D_m \tilde\theta\bigg]^{-1} D_\theta \Psi \cdot \partial_\la \tilde\theta
\end{eqnarray}
is well defined and there is a constant $c$ so that:
\begin{equation}\label{Dm}
  |D_\la m|< c  \ga^\alpha.
\end{equation}


\vskip1cm

\setcounter{equation}{0}

\section{A contraction property of the mean field free energy}
\label{sec:9s}

In this section we shall prove Proposition \ref{thm4s.3.1}.
The basic bound comes from the analysis of the previous section.
As we use here the variables $(m_1,m_2)$ instead of $(m_+,m_-)$
we need to translate the results into the new variables.
The minimizer $m^{(\eps)} = (m^{(\eps)}_+,0)$ becomes
$(m_\eps,m_\eps)$, $m_\eps = m^{(\eps)}_+/2$.  We have

    \medskip

\begin{lem}
\label{coro4.0}
Let $G_{++}$ and $G_{--}$ be given by \eqref{9s.1}--\eqref{9s.2}, then
    \begin{equation}
    \label{9s.3}
   \frac {\partial^2 \hat \pi_\eps}{\partial h^2_i}(m_\eps,m_\eps)  = G_{++}+G_{--},\; i=1,2,\quad
   \frac {\partial^2 \hat \pi_\eps}{\partial h_2\partial h_1}(m_\eps,m_\eps)  = G_{++}-G_{--}.
   \end{equation}
Both $G_{++}+G_{--}$ and $G_{++}-G_{--}$ are non negative, and for $i=1,2$
    \begin{equation}
    \label{9s.3a}
  |\frac {\partial^2 \hat \pi_\eps}{\partial h^2_i}(m_\eps,m_\eps)| + |  \frac {\partial^2 \hat \pi_\eps}{\partial h_2\partial h_1}(m_\eps,m_\eps)|
  = 2 G_{++}\le 1-\frac \eps 2.
   \end{equation}

\end{lem}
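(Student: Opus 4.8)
The plan is to reduce everything to the linear change of variables $h_+=h_1+h_2$, $h_-=h_2-h_1$ already used in Section~\ref{sec:5s}, under which $\hat\pi_\eps(h_1,h_2)=\pi_\eps(h_1+h_2,\,h_2-h_1)$. First I would record the chain-rule identities $\partial_{h_1}=\partial_{h_+}-\partial_{h_-}$ and $\partial_{h_2}=\partial_{h_+}+\partial_{h_-}$, which upon squaring give
\[
\frac{\partial^2\hat\pi_\eps}{\partial h_i^2}
=\frac{\partial^2\pi_\eps}{\partial h_+^2}+\frac{\partial^2\pi_\eps}{\partial h_-^2}
\mp 2\,\frac{\partial^2\pi_\eps}{\partial h_+\partial h_-}\quad(i=1,2),
\qquad
\frac{\partial^2\hat\pi_\eps}{\partial h_2\partial h_1}
=\frac{\partial^2\pi_\eps}{\partial h_+^2}-\frac{\partial^2\pi_\eps}{\partial h_-^2}.
\]

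Next I would pin down the evaluation point. At $(m_1,m_2)=(m_\eps,m_\eps)$ one has $m_-=m_2-m_1=0$; the conjugate $h$ from \eqref{2s.6}, characterized by \eqref{2s.5} together with the formula $\partial_{h_-}\pi_\eps=e^{-\eps}\sinh(h_-)/Z$ in \eqref{2s.3}, therefore satisfies $h_-=0$ (equivalently, at the minimizer of $f_\eps$ one has $h=m$ in the $\pm$ coordinates, and $m^{(\eps)}=(m^{(\eps)}_+,0)$). But the mixed second derivative $\partial_{h_+}\partial_{h_-}\pi_\eps=-\sinh(h_+)\sinh(h_-)/Z^2$ (the unnumbered display following \eqref{2s.3}) vanishes when $h_-=0$. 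Hence at this point $\partial^2_{h_+}\pi_\eps$ and $\partial^2_{h_-}\pi_\eps$ are exactly the quantities $G_{++}$ and $G_{--}$ of \eqref{9s.1}--\eqref{9s.2}, i.e.\ the diagonal matrix $G=D_h^2\pi_\eps$ used in the proof of Theorem~\ref{thm4.2}, and the identities above collapse to \eqref{9s.3}.

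For the sign assertions, $G_{++}>0$ and $G_{--}>0$ are immediate from the explicit formulas \eqref{9s.1}--\eqref{9s.2} (equivalently, from the positive definiteness of $D^2\pi_\eps$ established in Section~\ref{sec:5s}), so $G_{++}+G_{--}>0$. For $G_{++}-G_{--}\ge0$ I would subtract \eqref{9s.2} from \eqref{9s.1} over the common denominator $Z^2=(e^\eps\cosh h_++e^{-\eps})^2$: the $\cosh h_+$ contributions cancel and one is left with $2(e^{2\eps}-e^{-2\eps})/Z^2=4\sinh(2\eps)/Z^2\ge0$. Finally, for \eqref{9s.3a}, since both $\frac{\partial^2\hat\pi_\eps}{\partial h_i^2}=G_{++}+G_{--}$ and $\frac{\partial^2\hat\pi_\eps}{\partial h_2\partial h_1}=G_{++}-G_{--}$ are non-negative, the sum of their absolute values equals $(G_{++}+G_{--})+(G_{++}-G_{--})=2G_{++}$, and the bound $2G_{++}\le 1-\eps/2$ is exactly the last inequality already derived inside the proof of Theorem~\ref{thm4.2} (display \eqref{9s.1}), so it may simply be quoted.

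I do not expect a genuine obstacle: the computation is short and routine. The one point demanding care is verifying that $(m_\eps,m_\eps)$ indeed corresponds to $h_-=0$ — this is where the characterization of the minimizers of $\hat f_\eps$ from Appendix~\ref{sec:8s} (Proposition~\ref{thm4s.1}) enters — and keeping the factor of $2$ in $m=2D_h\pi_\eps$ versus $G=D^2_h\pi_\eps$ tracked consistently so that $\partial^2_{h_+}\pi_\eps=G_{++}$ and $\partial^2_{h_-}\pi_\eps=G_{--}$ as normalized in \eqref{9s.1}--\eqref{9s.2}.
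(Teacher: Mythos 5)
Your proposal is correct and follows essentially the same route as the paper: the chain rule under the rotation $h_+=h_1+h_2$, $h_-=h_2-h_1$, the vanishing of $\partial_{h_+}\partial_{h_-}\pi_\eps$ at the minimizer (where $h_-=0$), and reuse of the bound $2G_{++}\le 1-\eps/2$ established in \eqref{9s.1}. You additionally spell out the computation $G_{++}-G_{--}=2\sinh(2\eps)/Z^2\ge 0$ and the check that $(m_\eps,m_\eps)$ corresponds to $h_-=0$, both of which the paper leaves tacit; the underlying argument is the same.
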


    \medskip

\begin{proof}
    \begin{equation*}
   \frac {\partial  \hat \pi_\eps}{\partial h _1}   = \frac {\partial   \pi_\eps}{\partial h _+}
   -\frac {\partial   \pi_\eps}{\partial h _-},\quad
   \frac {\partial  \hat \pi_\eps}{ \partial h_2}  = \frac {\partial   \pi_\eps}{\partial h _+}
   +\frac {\partial   \pi_\eps}{\partial h _-}
   \end{equation*}
       \begin{equation*}
   \frac {\partial^2  \hat \pi_\eps}{\partial h _1^2}   = \frac {\partial^2   \pi_\eps}{\partial h _+^2}
   +\frac {\partial^2   \pi_\eps}{\partial h _-^2} - 2
   \frac {\partial^2  \pi_\eps}{\partial h _+\partial h_-},\quad
   \frac {\partial^2  \hat \pi_\eps}{ \partial h_2\partial h _1}  = \frac {\partial^2   \pi_\eps}{\partial h _+}
   -\frac {\partial^2   \pi_\eps}{\partial h _-^2},
   \end{equation*}
hence \eqref{9s.3} because
$\frac {\partial^2   \pi_\eps}{\partial h_+ \partial h_-}(2m_\eps,0)=0$.

\end{proof}

It follows by continuity that:

\begin{cor}
\label{cor9s.1}
There are  $c_0>0$ and $r<1$ so that
the following holds. Call
    \begin{equation}
    \label{9s.4.1}
R_{i,j} = \sup_{h=(h_1,h_2):|h_i-m_\eps| \le c_0, i=1,2}  |  \frac {\partial^2 \hat \pi_\eps}{\partial h_i\partial h_j}(h)|. 
    \end{equation}
Then
    \begin{equation}
    \label{9s.4}
\sum_{j=1,2} R_{i,j} 
 \le r, \quad i=1,2
    \end{equation}
and the matrix $(1-R)$ is invertible.
\end{cor}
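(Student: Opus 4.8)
The plan is to bootstrap from Lemma~\ref{coro4.0} by a short continuity argument; the substantive estimate on the Hessian of $\hat\pi_\eps$ at the minimizer has already been established there and will be used as a black box. First note that each $R_{i,j}$ is a finite nonnegative number (the supremum of $\bigl|\frac{\partial^2\hat\pi_\eps}{\partial h_i\partial h_j}\bigr|$ over a compact box), so the inequalities $\sum_{j=1,2}R_{i,j}\le r$ are genuine bounds on the absolute row sums of the nonnegative $2\times2$ matrix $R=(R_{i,j})$, and $\max_i\sum_{j=1,2}R_{i,j}$ is exactly the operator norm $\|R\|_\infty$ induced by the $\ell^\infty$ norm on $\R^2$.

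The box in the definition of $R_{i,j}$ is centred at the point $h^{*}=(m_\eps,m_\eps)$, which (in the $(h_1,h_2)$ coordinates) is precisely the value of $h$ at the minimizer $m^{(\eps)}$: indeed at a critical point $Df_\eps=0$ forces $\theta=m$, hence $h=m=(m_\eps,m_\eps)$, cf.\ \eqref{3.18.5} and \eqref{2s.6}. At $h^{*}$ Lemma~\ref{coro4.0}, i.e.\ \eqref{9s.3}--\eqref{9s.3a}, gives for $i=1,2$
\[
\Bigl|\frac{\partial^2\hat\pi_\eps}{\partial h_i^2}(h^{*})\Bigr|+\Bigl|\frac{\partial^2\hat\pi_\eps}{\partial h_2\partial h_1}(h^{*})\Bigr|=2G_{++}\le 1-\frac{\eps}{2}.
\]

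Next I would use that $\hat\pi_\eps$ is smooth: since $Z=e^\eps\cosh h_++e^{-\eps}\cosh h_-\ge 2e^{-\eps}>0$ for all $h$ and $\pi_\eps=\log(2Z)$, all second partial derivatives $\frac{\partial^2\hat\pi_\eps}{\partial h_i\partial h_j}$ are continuous. Hence, given any $\delta>0$, there is $c_0>0$ (allowed to depend on $\eps$, which is harmless as $\eps$ is fixed here) such that $\bigl|\frac{\partial^2\hat\pi_\eps}{\partial h_i\partial h_j}(h)-\frac{\partial^2\hat\pi_\eps}{\partial h_i\partial h_j}(h^{*})\bigr|\le\delta$ for all $h$ with $|h_i-m_\eps|\le c_0$, $i=1,2$; taking the supremum over this box gives $R_{i,j}\le\bigl|\frac{\partial^2\hat\pi_\eps}{\partial h_i\partial h_j}(h^{*})\bigr|+\delta$. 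Summing over $j$ and using the displayed bound, $\sum_{j=1,2}R_{i,j}\le 2G_{++}+2\delta\le 1-\frac{\eps}{2}+2\delta$ for $i=1,2$. Choosing $\delta$ small (say $\delta=\eps/16$) gives \eqref{9s.4} with $r:=1-\eps/4<1$.

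Finally, invertibility of $1-R$ is immediate from $\|R\|_\infty\le r<1$: the Neumann series $\sum_{n\ge0}R^{n}$ converges in this norm and equals $(1-R)^{-1}$. There is no genuine obstacle in this corollary; the only mild subtlety is that $c_0$ is produced from a continuity argument and so may depend on $\eps$, which plays no role.
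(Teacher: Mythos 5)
Your proof is correct and follows exactly the route the paper intends but leaves implicit: the paper just prefaces the corollary with "It follows by continuity that:", and your argument spells out that continuity step — center the box at $h^{*}=(m_\eps,m_\eps)$, invoke the estimate $2G_{++}\le 1-\eps/2$ from Lemma~\ref{coro4.0} (eq.~\eqref{9s.3a}) at the center, use uniform continuity of the second partials of $\hat\pi_\eps$ (smooth since $Z\ge 2e^{-\eps}>0$) to propagate the bound to a small box, and deduce invertibility of $1-R$ from $\|R\|_\infty\le r<1$ via the Neumann series. No gap; a minor arithmetic remark is that $\delta=\eps/16$ yields $\sum_j R_{i,j}\le 1-3\eps/8$, which still satisfies the claimed $r=1-\eps/4$ since $1-3\eps/8\le 1-\eps/4$.
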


\medskip

We are now ready for the proof of Proposition \ref{thm4s.3.1}.  We
fix throughout the sequel a pair $(x,i)$ and $(x,i')$ of vertically interacting sites in $\Delta_{\rm in}$,
thus $(x,i')= v_{x,i}$ and  use the above properties to study the  function $g_\eps(m)$
introduced in \eqref{9s.5}.
We write $m=(m_i,m_{i'})$ and write  $a_i=a_{x,i}$, $a_{i'}
=a_{x,i'}$, the latter defined in \eqref{4s.5.23}.  We also write
$\la_i$ and $\la_{i'}$ dropping the superscript $u$ on which they depend via \eqref{9s.7}.
We finally shorthand $a_j$ for $a_{x,j}$.

\begin{lem}
There is a constant $c$ so that for any $u\in \mathcal N_{x,i,i'}$, see \eqref{9s.8},
   \begin{eqnarray}
    \label{9s.9}
&&
\frac{| \la_j -\la_j^{\rm eq}|}{1-a_j} \le\zeta + c \ga^\alpha,\quad j=i,i',
    \end{eqnarray}
where $\la_j^{\rm eq}$ is defined in Proposition \ref{thm4s.3.1}.

\end{lem}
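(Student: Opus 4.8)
The plan is to rewrite the left-hand side of \eqref{9s.9} as a signed convex combination of the deviations $u(y,j)-m_\eps$ over $y$ in layer $j$, and to estimate it one $\ell_-$--interval at a time, exploiting that $\mathcal N_{x,i,i'}$ controls the $\ell_-$--block averages of $u$ everywhere except on the single interval $C_0:=C^{\ell_-,j}_x$ containing $x$.

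First I would record two elementary facts. Since $(x,j)\in\Delta_{\rm in}$ we have $(x,j)\notin\Delta_0$, so $y=x$ does not occur in $a_j$, and the normalization $\sum_{y\ne x}J_\ga(x,y)=1$ from \eqref{1.1} gives $\sum_{y\ne x:(y,j)\notin\Delta_0}J_\ga(x,y)=1-a_j$, whence $\la_j^{\rm eq}=m_\eps(1-a_j)$ and the quantity to bound equals $(1-a_j)^{-1}\sum_{y\ne x:(y,j)\notin\Delta_0}J_\ga(x,y)(u(y,j)-m_\eps)$. Second, since $\Delta_0$ is a union of $Q$--rectangles and $\ell_+$ is an integer multiple of $\ell_-$, the trace of $\Delta_0$ on layer $j$ is a union of whole $\ell_-$--intervals; hence each $\ell_-$--interval $C$ is either contained in $\Delta_0$ (and then invisible to the sum) or disjoint from it, in which case every value $u(\cdot,j)$ on $C$ is defined and, if moreover $C\ne C_0$, its $u(\cdot,j)$--average $\bar u_C$ satisfies $|\bar u_C-m_\eps|\le\zeta$ by the definition \eqref{9s.8} of $\mathcal N_{x,i,i'}$. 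Note that $C_0\subset Q_{x,j}\subset\Delta_{\rm in}$, so $C_0$ is itself one of the intervals disjoint from $\Delta_0$.

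The step I expect to be the crux is the lower bound $1-a_j\ge\frac12$. I would argue that, because $(x,j)$ lies inside the whole rectangle $Q_{x,j}\subset\Delta_{\rm in}$ whose trace on layer $j$ is an interval of length $\ell_+=\ga^{-(1+\alpha)}$, far larger than the interaction length $\ga^{-1}$, at least one of the two half-ranges $\{y<x\}$, $\{y>x\}$ of $J_\ga(x,\cdot)$ lies entirely in that interval, and by symmetry of $J$ it carries $J_\ga(x,\cdot)$--mass $\frac12$, all of it outside $\Delta_0$. Without this the crude treatment of $C_0$ below would produce only an $O(1)$ error instead of $O(\ga^\alpha)$, so this is really the heart of the estimate.

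Finally I would bound the numerator. On $C_0\setminus\{x\}$ only the trivial bounds $|u(y,j)-m_\eps|<2$ and $\sum_{y\in C_0}J_\ga(x,y)\le\ell_-\,c_\ga\ga\|J\|_\infty=O(\ga^\alpha)$ are used, so $C_0$ contributes $O(\ga^\alpha)$. On any other $\ell_-$--interval $C$ disjoint from $\Delta_0$ I would split, with $\bar J_C$ the average of $J_\ga(x,\cdot)$ over $C$,
\[
\sum_{y\in C}J_\ga(x,y)(u(y,j)-m_\eps)=\sum_{y\in C}\bigl(J_\ga(x,y)-\bar J_C\bigr)(u(y,j)-m_\eps)+\Bigl(\sum_{y\in C}J_\ga(x,y)\Bigr)(\bar u_C-m_\eps),
\]
bounding the second term in absolute value by $\zeta\sum_{y\in C}J_\ga(x,y)$ and, for the first, using that $J_\ga(x,\cdot)$ oscillates over $C$ by at most $c_\ga\ga^2\|J'\|_\infty\ell_-=O(\ga^{1+\alpha})$ when $C$ lies inside the interaction range, and that $J_\ga(x,\cdot)=O(\ga^{1+\alpha})$ on the at most two intervals straddling the edge of the range (since $J\in C^1$ with support $[-1,1]$ forces $J(\pm1)=0$), to obtain an $O(\ga^{2\alpha})$ bound for that first term on each such $C$. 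Summing over the $O(\ga^{-\alpha})$ intervals that meet the interaction range, the oscillation errors total $O(\ga^\alpha)$ while the $\zeta$--terms total at most $\zeta(1-a_j)$; hence $|\la_j-\la_j^{\rm eq}|\le\zeta(1-a_j)+O(\ga^\alpha)$, and dividing by $1-a_j\ge\frac12$ gives \eqref{9s.9}.
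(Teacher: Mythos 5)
Your proof is correct and follows essentially the same route as the paper: decompose $\la_j-\la_j^{\rm eq}$ over $\ell_-$-intervals, replace $J_\ga$ by its block average so the constraint $|u^{\ell_-}-m_\eps|\le\zeta$ yields the $(1-a_j)\zeta$ term, control the replacement error by the smoothness of $J$ (giving $O(\ga^\alpha)$), handle the exceptional interval containing $x$ by the crude $O(\ga^\alpha)$ mass bound, and divide using $a_j\le\frac12$. The paper leaves the bound $a_j\le\frac12$ as a one-line remark citing ``the choice of $x$ and the symmetry of $J$''; your fleshed-out justification (one half-range of $J_\ga(x,\cdot)$ stays inside $Q_{x,j}\subset\Delta_{\rm in}$) is a correct reading of that remark.
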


\begin{proof}
By \eqref{9s.7}
   \begin{eqnarray*}
\la_j -\la_j^{\rm eq} &=&   \sum_{y\ne x:(y,j)\in C^{\ell_-,j}_x}
J_\ga(x,y) \big(u(y,j)-m_\eps)  \\&+&
 \sum_{y:(y,j)\notin \{C^{\ell_-,j}_x \cup \Delta_0\}
}
J_\ga(x,y) \big(u(y,j)-m_\eps).
    \end{eqnarray*}
We
add and subtract $\hat J_\ga(x,y)$ to $J_\ga(x,y)$ where $\hat J_\ga(x,y)$
is obtained by averaging  $J_\ga(x,y')$ over $ C^{\ell_-,j}_y$.  In the term with
$\hat J_\ga(x,y)$ we can replace $u(y,j)$ by its average and use \eqref{9s.8},
\eqref{4s.5.23} and that $J_\ga$ is a probability kernel to get the bound $(1-a_j) \zeta$.
The sum over the terms with $\hat J_\ga(x,y)-J_\ga(x,y)$ is bounded by $c'\ga^\alpha$,
by the smoothness of $J_\ga$.

\end{proof}

In the sequel we shall only use the bound \eqref{9s.9} and not the specific form of the $\la_j$.

By differentiating \eqref{9s.5} we get
   \begin{equation*}
\frac {\partial^2  g_\eps}{\partial m_j \partial m_{j'}}  =
   \frac {\partial^2  \hat \phi_{\eps}}{\partial m_j \partial m_{j'}} -a_j\mathbf 1_{j=j'},\quad j,j' \in \{i,i'\}.
      \end{equation*}
At $\eps=0$ $\frac {\partial^2  g_0}{\partial m_j \partial m_{j'}} $ is diagonal
with entries  $-I''(m_j) - a_j$, $j=i,i'$.  The minimum of $-I''$ is at 0
and $-I''(0)=1$.  Since $a_j \le 1/2$ (this follows from the choice of $x$ and the symmetry of $J$; see~(\ref{4s.5.23})),
we then conclude that:

\begin{lem}
There is $c_1>0$ so that $g_\eps$ is strictly convex  for $\eps\le c_1$
and for any such $\eps$ it has a unique minimizer $\tilde m$ (called
$m^{(u)}$ in   Proposition \ref{thm4s.3.1}).

\end{lem}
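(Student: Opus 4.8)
The plan is to prove strict convexity of $g_\eps$ on $(-1,1)^2$ by establishing a strictly positive lower bound on its Hessian, and then to produce a unique minimizer by ruling out minima on the boundary of the square. The whole argument is driven by the Legendre duality between $\hat\phi_\eps$ and the two-layer pressure $\hat\pi_\eps$ of Section \ref{sec:5s}.

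First I would record, as already noted, that differentiating \eqref{9s.5} gives $D^2 g_\eps(m)=D^2_m\hat\phi_\eps(m)-\mathrm{diag}(a_i,a_{i'})$, so everything reduces to bounding $D^2_m\hat\phi_\eps$ from below. Since $\hat\phi_\eps$ is the Legendre transform of the strictly convex $\hat\pi_\eps$ and $D\hat\phi_\eps(m)=h(m)$ is the inverse of the map $D\hat\pi_\eps$, one has $D^2_m\hat\phi_\eps(m)=\bigl(D^2_h\hat\pi_\eps(h(m))\bigr)^{-1}$. Writing $\hat\pi_\eps(h_1,h_2)=\pi_\eps(h_1+h_2,h_2-h_1)$ and letting $A$ denote the linear map $(h_1,h_2)\mapsto(h_+,h_-)$, one has $D^2_h\hat\pi_\eps=A^{\mathsf T}(D^2\pi_\eps)A$ with $A A^{\mathsf T}=A^{\mathsf T}A=2\mathbb I$ and $A^{-\mathsf T}=\tfrac12 A$; from $A^{\mathsf T}(D^2\pi_\eps)A\,x=\mu x$ one gets $D^2\pi_\eps(Ax)=\tfrac\mu2(Ax)$, so the eigenvalues of $D^2_h\hat\pi_\eps$ are exactly twice those of $D^2\pi_\eps$. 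Hence $\lambda_{\min}\bigl(D^2_m\hat\phi_\eps(m)\bigr)=\bigl(2\lambda_{\max}(D^2\pi_\eps(h(m)))\bigr)^{-1}$. Now a direct computation from the formulas after \eqref{2s.3} gives $\mathrm{tr}\,D^2\pi_\eps=1-\frac{e^{2\eps}\sinh^2 h_+ +e^{-2\eps}\sinh^2 h_-}{Z^2}\le 1$, and since $D^2\pi_\eps$ is positive definite (Section \ref{sec:5s}) we get $\lambda_{\max}(D^2\pi_\eps)\le \mathrm{tr}\,D^2\pi_\eps-\lambda_{\min}(D^2\pi_\eps)<1$ at every $h$. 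Therefore $\lambda_{\min}\bigl(D^2_m\hat\phi_\eps(m)\bigr)>\tfrac12$ for all $m\in(-1,1)^2$. Combining with $0\le a_i,a_{i'}\le\tfrac12$ (see the text after \eqref{4s.5.23}) and Weyl's inequality, $\lambda_{\min}\bigl(D^2 g_\eps(m)\bigr)\ge\lambda_{\min}(D^2_m\hat\phi_\eps(m))-\max\{a_i,a_{i'}\}>0$, so $g_\eps$ is strictly convex on $(-1,1)^2$. To fix the constant $c_1$ and keep a uniform margin (needed for the continuity arguments elsewhere in the appendix), I would also invoke the explicit case $\eps=0$, where $D^2 g_0$ is diagonal with entries $-I''(m_j)-a_j\ge 1-\tfrac12=\tfrac12$; since $\lambda_{\min}(D^2_m\hat\phi_\eps)$ depends continuously on $\eps$ with this value at $\eps=0$, one gets $D^2 g_\eps\ge c\,\mathbb I$ with $c>0$ uniform over $m\in(-1,1)^2$ for all $\eps\le c_1$.

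For the minimizer, note that $\hat\phi_\eps$ extends continuously to $[-1,1]^2$ (Proposition \ref{thm3s.2}), hence so does $g_\eps$, and $g_\eps$ attains its minimum on the compact square. To see that the minimum lies in the interior, I would use that $\hat\pi_\eps$ is finite and differentiable on all of $\mathbb R^2$, so $\hat\phi_\eps$ is essentially smooth and $m\mapsto h(m)=D\hat\phi_\eps(m)$ is the inverse of the diffeomorphism $D\hat\pi_\eps:\mathbb R^2\to(-1,1)^2$; concretely, the explicit relations \eqref{2s.5}--\eqref{2s.8.2} (transparent already at $\eps=0$, where $\partial_{m_j}\hat\phi_0(m)=\tfrac12\log\frac{1+m_j}{1-m_j}$) give $h_j(m)\to\pm\infty$ as $m_j\to\pm1$ with $m_{j'}$ fixed in $(-1,1)$. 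Since $\partial_{m_j}g_\eps(m)=h_j(m)-a_j m_j-\lambda^u_j$ and $|a_j m_j|\le\tfrac12$, $|\lambda^u_j|\le1$, we get $\partial_{m_j}g_\eps>0$ near the face $m_j=1$ and $<0$ near $m_j=-1$; hence no minimizer on $\partial([-1,1]^2)$. The minimum is thus attained in $(-1,1)^2$, where by strict convexity it is the unique critical point; this is the unique minimizer $\tilde m=m^{(u)}$.

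The step I expect to be the main obstacle is the \emph{uniform in $m$} positivity of $D^2_m\hat\phi_\eps-\mathrm{diag}(a_i,a_{i'})$: because $D^2_m\hat\phi_\eps$ blows up near $\partial([-1,1]^2)$, a naive ``$D^2\hat\phi_0\ge\mathbb I$, then perturb in $\eps$'' argument requires the comparison to be uniform up to the boundary, which it is not obviously. This is exactly why routing the estimate through the trace identity $\mathrm{tr}\,D^2\pi_\eps\le1$ — equivalently the clean pointwise bound $\lambda_{\min}(D^2_m\hat\phi_\eps)>\tfrac12$ — is the right device, and why the bound $a_j\le\tfrac12$ must be used in sharp form. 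The secondary technical point is the boundary behaviour $h_j(m)\to\pm\infty$ used to confine the minimizer; it is standard for Legendre transforms of this type but does require the explicit structure of $\hat\pi_\eps$ rather than convexity alone.
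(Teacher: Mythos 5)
Your proof is correct, and it follows a genuinely different route from the paper's.

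The paper's own argument is short: it computes the Hessian of $g_\eps$ explicitly at $\eps=0$, where it is $\mathrm{diag}(-I''(m_j)-a_j)$, notes $-I''\ge 1$ and $a_j\le \tfrac12$, and then simply declares that one can ``conclude'' the lemma — the perturbation step in $\eps$ is left implicit. Your worry about that step is legitimate: $D^2_m\hat\phi_\eps$ blows up as $m$ approaches $\partial([-1,1]^2)$, so a blanket ``continuity in $\eps$ uniform in $m$'' argument needs to be justified. Your replacement is cleaner: routing the estimate through the Legendre dual, you verify from the formulas following \eqref{2s.3} the pointwise trace identity $\mathrm{tr}\,D^2\pi_\eps=1-Z^{-2}(e^{2\eps}\sinh^2 h_+ + e^{-2\eps}\sinh^2 h_-)\le 1$, and combined with the positive definiteness of $D^2\pi_\eps$ (proved in Section \ref{sec:5s}) this gives $\lambda_{\max}(D^2\pi_\eps)<1$ at every $h$. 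The change-of-variables observation that $D^2_h\hat\pi_\eps=A^{\mathsf T}(D^2\pi_\eps)A$ with $A^{\mathsf T}A=2\mathbb I$ (so the eigenvalues simply double) then yields $\lambda_{\min}(D^2_m\hat\phi_\eps)=\bigl(2\lambda_{\max}(D^2\pi_\eps)\bigr)^{-1}>\tfrac12$ at every $m\in(-1,1)^2$, and $a_j\le\tfrac12$ plus Weyl finishes strict convexity. Two remarks: first, this argument is actually \emph{independent of} the smallness of $\eps$ — the trace bound and positive definiteness hold for all $\eps\ge 0$ — so pointwise strict convexity of $g_\eps$ is obtained for every $\eps$, which is stronger than the paper's statement (the uniform margin does of course degenerate as $\eps\to\infty$, which is why a cutoff $c_1$ is still natural for the quantitative estimates elsewhere in the appendix). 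Second, your explicit boundary exclusion argument — $\partial_{m_j}g_\eps(m)=h_j(m)-a_jm_j-\lambda^u_j$ with $|a_jm_j+\lambda^u_j|\le\tfrac32$ bounded while $h_j(m)\to\pm\infty$ as $m_j\to\pm1$ — is a genuine completion of the proof of existence and uniqueness of the interior minimizer that the paper takes for granted. Your secondary suggestion to also ``invoke the $\eps=0$ case and perturb'' for a uniform margin is precisely the step you correctly flag as delicate near the boundary; it is made unnecessary by your own trace bound, so the cleanest version of your argument drops it entirely.
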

 In the sequel we tacitly suppose $\eps\le c_1$. The critical point $m$ of $g_\eps$ satisfies
   \begin{equation}
    \label{9s.10}
\hat D   \hat\phi_\eps(m)  = \hat \theta  :=(a_i m_i +\la_i, a_{i'} m_{i'} +\la_{i'})
    \end{equation}
Then, by Lemma \ref{lemma5s.2bis},
   \begin{equation}
    \label{9s.11}
m  = \hat D_h \hat \pi_\eps (\hat\theta ),
    \end{equation}
where $\hat D_h\hat \pi_\eps(\hat\theta ) $ is the gradient of $\hat \pi_\eps(h_1,h_2)$
computed  at $(h_1,h_2) = \hat\theta$.  We shall study \eqref{9s.11} distinguishing
among the possible values of   $a_i$ and $a_{i'}$ which
depend on the horizontal distance of $(x,i)$ and respectively $(x,i')$ from $\Delta_0$.
Because of the geometric properties  of $\Delta_{\rm in}$,  only three cases can occur: (i)  $a_i=a_{i'} =0$; (ii) $a_i=a_{i'} \in (0, \frac 12]$; (iii) $a_i\in (0, \frac 12]$
and $a_{i'}=0$ or viceversa.
Case (i) occurs when the horizontal distances of
$(x,i)$ and  $(x,i')$ from $\Delta_0$ are both $>\ga^{-1}$. Case (ii) is when the distances of
$(x,i)$ and  $(x,i')$ from $\Delta_0$ are both $\le \ga^{-1}$ and case(iii) is when one is $\le \ga^{-1}$ and the other
$> \ga^{-1}$. We start from case (i) which is the easiest.

\medskip

Case (i).
By \eqref{9s.9} for  $\ga$ small enough $|\hat\theta_j-m_\eps| < c_0$, $c_0$ as in
Corollary \ref{cor9s.1}, hence by \eqref{9s.9}
   \begin{eqnarray*}
&&|m_j-m_\eps| \le \sum_{j'}R_{j,j'}|\la_{j'} -\la_{j'}^{\rm eq}|, \quad
\sum_{j'}R_{j,j'}\le r <1
    \end{eqnarray*}
in agreement with  in   Proposition \ref{thm4s.3.1} after setting
$C_{x,i,i'}(j,j')=R_{j,j'}$ and recalling that in case (i) $a_j=0$, $j=i,i'$.

    \medskip

Case (ii). $\hat\theta$ in
\eqref{9s.11}  is now (after adding and subtracting $m_\eps$)
   \begin{equation}
    \label{9s.13}
 \hat \theta_j= m_\eps + a(m_j-m_\eps) +\la_j -(1-a)m_\eps,\quad j=i,i'.
    \end{equation}
Since $\hat \theta$ depends on $m$, \eqref{9s.11} is an equation in $m$ and not
a formula for $m$ as in case (i).
We introduce an interpolating parameter $t\in [0,1]$ and define
       \begin{equation}
    \label{9s.15}
 \hat \theta_j(t)= m_\eps + a(m_j-m_\eps) +t\Big(\la_j -(1-a)m_\eps\Big),\quad j=1,2
    \end{equation}
calling $m(t)$ the solution of  \eqref{9s.11} with  $\hat \theta$ replaced by $\hat \theta(t)$.
Observe that $m(0)=m_\eps$ is the solution at $t=0$ while the solution at $t=1$
is what we want to find because   $ \hat \theta(1)= \hat \theta$.

Supposing that $m(t)$ and its derivative $\dot m(t)$  exist we can then
differentiate  \eqref{9s.11} to get
       \begin{equation}
    \label{9s.16}
\dot m_j =\sum _{p=i,i'}K_{jp}\{ a \dot m_p + (\la_p -(1-a)m_\eps)\},\quad
K_{jp} =
\frac {\partial^2 \hat \pi_\eps}{\partial h_j\partial h_p}(\hat \theta(t))
    \end{equation}
where $\hat \theta(t)$ is computed at $m=m(t)$.  If moreover $|m(t)-m_\eps| \le 2 \zeta$
then $|\hat \theta_j(t)- m_\eps|\le 2 \zeta$ and $1- aK$ is invertible and we have
       \begin{equation}
    \label{9s.17}
\dot m  = V(m,t):=(1-aK)^{-1} K(\la  -(1-a)u^{\rm eq}),\quad u^{\rm eq}=(m_\eps,m_\eps),\; \la=(\la_i,\la_{i'}).
    \end{equation}
The evolution equation \eqref{9s.17} starting from $m(0)=m_\eps$ has a unique solution
till the first time $T$  when $|m_j(T) - m_\eps| = c_0$,  because by Corollary \ref{cor9s.1}, $(1-K(t))$ is invertible
and smooth for $t\le T$ and we have
      \begin{equation}
    \label{9s.18}
|m_j(t)-m_\eps| \le t \sum_{n=0}^\infty a^n\sum_{p=i,i'} (R^{n+1})_{jp} | \la_p- (1-a)m_\eps|.
    \end{equation}
 Set
 \[
 C_{x,i,i'}(j,j')   = (1-a) \sum_{n=0}^\infty a^n (R^{n+1})_{j,j'}.
  \]
 Then, by \eqref{9s.4}, we get
  \[
  \sum_{j'=i,i'} C_{x,i,i'}(j,j') \le (1-a)\frac{r}{1-ar} < r , \quad j=i,i'
  \]
 Thus  $|m_j(t)-m_\eps| \le 2\zeta$ for $t \le \min\{T,1\}$,
 hence the above holds till $t=1$ and Proposition \ref{thm4s.3.1} is proved in case (ii).

    \medskip

Case (iii) with $a_i=a>0$ and $a_{i'}= 0$ (same proof applies when $a_i=0$ and $a_{i'}>0$).  Here
   \begin{equation*}
 \hat \theta = \Big(m_\eps + a(m_i-m_\eps) +\la_i -(1-a)m_\eps, m_\eps +(\la_{i'}-m_\eps)\Big)
    \end{equation*}
 and proceeding as in case (ii) we set
    \begin{equation}
    \label{9s.21}
 \hat \theta (t) = \Big(m_\eps + a(m_i-m_\eps) +t[\la_i -(1-a)m_\eps], m_\eps +t(\la_{i'}-m_\eps)\Big).
    \end{equation}
Analogously to \eqref{9s.16},
       \begin{equation*}
\dot m_i = K_{i,i} a \dot m_i +\{ K_{i,i}(\la_i - (1-a)m_\eps) + K_{i,i'}(\la_{i'} - m_\eps) \}.
    \end{equation*}
Since $ K_{i,i} a<1$ till when $|m_i(t)-m_\eps| < 2\zeta$ proceeding as in case (ii) we get
that the evolution equation has solution till time $t=1$ and
       \begin{equation}
    \label{9s.22}
|m_i(1)-m_\eps| \le   \sum_{n=0}^\infty (a R_{i,i})^n   \{ R_{i,i}|\la_i - (1-a)m_\eps| + R_{i,i'}|\la_{i'} - m_\eps| \}.
    \end{equation}
We then set:
       \begin{equation}
    \label{9s.23}
 C_{x,i,i'}(i,i)= \frac{R_{i,i}(1-a)}{1-aR_{i,i}},\quad
 C_{x,i,i'}(i,i')= \frac{R_{i,i'}}{1-aR_{i,i}}
    \end{equation}
which verifies the condition in Proposition \ref{thm4s.3.1} because
       \begin{equation*}
\sum_{j=i,i'}C_{x,i,i'}(i,j)= \frac{R_{i,i}(1-a)}{1-aR_{i,i}}+
  \frac{R_{i,i'}}{1-aR_{i,i}}\le \frac{r-aR_{i,i}}{1-aR_{i,i}} < r.
    \end{equation*}
Since   $\dis{m_{i'}  = \frac{\partial\hat \pi_\eps}{\partial h_{i'}} (\hat \theta  )}$,
    \[
|m_{i'} -m_\eps| \le R_{i',i'} |\la_{i'}-m_\eps| + R_{i',i}\Big( a|m_i-m_\eps| +|\la_i-(1-a)m_\eps|\Big).
    \]
We then set
       \begin{eqnarray}
    \label{9s.24}
C_{x,i,i'}(i',i') &=& R_{i',i'} + aR_{i',i}\frac {R_{i,i'}}{1-aR_{i,i}}\nn\\
C_{x,i,i'}(i',i) &=&  R_{i',i} \Big( a \frac{R_{i,i}(1-a)}{1-aR_{i,i}}
+ 1-a \Big)
    \end{eqnarray}
and
       \begin{eqnarray*}
\sum_{j=i,i'}C_{x,i,i'}(i',j) &=& 
R_{i',i'} +\frac {aR_{i',i}R_{i,i'}}{1-aR_{i,i}}
+R_{i',i}\Big(\frac{aR_{i,i}(1-a)}{1-aR_{i,i}} +(1-a)\Big)
\\&\le & R_{i',i'} + R_{i',i} -  a  R_{i',i}\Big(1- \frac{R_{i,i'}}{1-aR_{i,i}} -
\frac{R_{i,i}(1-a)}{1-aR_{i,i}} \Big)
\\&\le &  r  -  a  R_{i',i} \Big(1- \frac{R_{i,i'}+R_{i,i}(1-a)}{1-aR_{i,i}}  \Big)
<r
    \end{eqnarray*}
having bounded in the last bracket $R_{i,i'}+R_{i,i} <1$.

\vskip2cm

\end{appendix}

\noindent {\bf Acknowledgement}

MEV thanks the warm hospitality of GSSI, L'Aquila, where
part of this research was done.

Research partially supported by CNPq grant 474233/2012-0.
MEV's work is partially supported by CNPq grant 304217/2011-5 and
Faperj grant E-24/2013-132035.
LRF's work is partially supported by CNPq grant 305760/2010-6 and
Fapesp grant 2009/52379-8.

\end{document}